\UseAllTwocells \xyoption{frame} \CompileMatrices
\theoremstyle{theorem}
\newtheorem{thm}{Theorem}[section]
\newtheorem{cor}[thm]{Corollary}
\newtheorem{lem}[thm]{Lemma}
\newtheorem{prop}[thm]{Proposition}
\theoremstyle{definition}
\newtheorem{defn}[thm]{Definition}
\newtheorem{rmk}[thm]{Remark}
\newtheorem{example}[thm]{Example}
\newtheorem*{thm*}{Theorem}
\numberwithin{equation}{section}
\theoremstyle{definition}
\theoremstyle{remark}
\theoremstyle{remark}
\numberwithin{equation}{section}
\newcommand{\M}{\mathcal{M}}
\newcommand{\Mbar}{\overline{\M}}
\def\<{\left\langle}
\def\>{\right\rangle}
\newcommand{\E}{\mathcal{E}}
\newcommand{\X}{\mathcal{X}}
\newcommand{\gG}{\mathcal{G}}
\newcommand{\twoarrows}{\rightrightarrows}
\newcommand\intF{\mathaccent23{F}}
\newcommand \Sigfan{\mathbf{\Sigma}}
\newcommand\XSigma{\X(\mathbf{\Sigma})}
\newcommand{\com}{\mathbb{C}}
\newcommand{\ration}{\mathbb{Q}}
\newcommand{\real}{\mathbb{R}}
\newcommand{\inte}{\mathbb{Z}}
\def\darr#1{\raise1.5ex\hbox{$\leftrightarrow$}
\mkern-16.5mu #1}
\def\roughly#1{\raise.3ex\hbox{$#1$\kern-.75em
\lower1ex\hbox{$\sim$}}}
\def\opname#1{\mathop{\kern0pt{\rm #1}}\nolimits}
\begin{document}
\title[Seidel representations and quantum cohomology of toric orbifolds]{Seidel representations\\ and quantum cohomology of toric orbifolds}
\author{Hsian-Hua Tseng}
\address{Department of Mathematics\\ Ohio State University\\ 100 Math Tower\\ 231 West 18th Avenue\\ Columbus, OH 43210-1174\\ USA}
\email{hhtseng@math.ohio-state.edu}

\author{Dongning Wang}
\address{Department of Mathematics \\ University of Wisconsin-Madison\\ Van Vleck Hall\\ 480 Lincoln Dr.\\ Madison, WI 53706\\ USA}
\email{dwang@math.wisc.edu}

\date{\today}

\begin{abstract}
We use Seidel representation for symplectic orbifolds constructed in \cite{TW} to compute the quantum cohomology ring of a compact symplectic toric orbifold $(\X,\omega)$. 
\end{abstract}
\maketitle
\tableofcontents

\section{Introduction}

In this paper we apply the Seidel representations for compact symplectic orbifolds, constructed in \cite{TW}, to give a description of quantum cohomology rings of symplectic toric orbifolds. The main idea in this work can be briefly summarized as follows. Let $(\X, \omega)$ be a compact symplectic orbifold. Denote by $Ham(\X,\omega)$ the ($2$-)group of Hamiltonian diffeomorphisms of $(\X,\omega)$. The Seidel representation of $(\X,\omega)$ is a group homomorphism $$\mathcal{S}: \pi_1(Ham(\X,\omega))\to QH_{orb}^*(\X,\omega)^\times$$ from the fundamental group of $Ham(\X, \omega)$ to the group $QH_{orb}^*(\X,\omega)^\times$ of multiplicatively invertible elements in the quantum cohomology ring of $(\X,\omega)$. Suppose there is a collection of loops $a_1, a_2,..., a_k$ in $Ham(\X,\omega)$ which compose to the identity loop $e$, namely
$$a_1\cdot a_2\cdot ...\cdot a_k=e,$$
Since $\mathcal{S}$ is a homomorphism, we have $$\mathcal{S}(a_1)*\mathcal{S}(a_2)*...* \mathcal{S}(a_k)=\mathcal{S}(e)=1.$$
This gives a relation in $QH_{orb}^*(\X,\omega)$. 

Suppose $(\X,\omega)$ is a compact symplectic toric orbifold. Then $\X$ can be defined by a combinatorial object called the {\em stacky fan} $\Sigfan=(\mathbf{N},\Sigma,\beta)$, see Section \ref{fanconstr} for a more detailed discussion. Every element $v\in \mathbf{N}$ in the lattice $\mathbf{N}$ determines a $\mathbb{C}^\times$-action on $\X$ and hence a loop in $Ham(\X,\omega)$. Let $S_v\in QH_{orb}^*(\X,\omega)$ denote the Seidel element corresponding to this loop. As discussed above, if $v_1, v_2,...,v_k\in \mathbf{N}$ are such that $v_1+v_2+...+v_k=0$ in $\mathbf{N}$. Then we have $$S_{v_1}* S_{v_2}*...*S_{v_k}=S_0=1.$$ 
This allows us to make use of additive relations in $\mathbf{N}$ to give a presentation of $QH_{orb}^*(\X,\omega)$. 

We now describe our results in more details. Let $\X$ be a compact symplectic toric orbifold associated with a labeled polytope\footnote{See Theorem \ref{thm:LT} for the correspondence between symplectic toric orbifolds and labeled polytopes.} $\Delta$ and let $\Sigfan=(\mathbf{N},\Sigma,\beta)$ be the stack fan associated to $\Delta$. Let $y_1,...,y_N\in\mathbf{N}$ be minimal generators of the rays in $\Sigma$. For each cone $\sigma$ in $\Sigma$, define 
$$\text{SBox}(\sigma):=\left\{b\in N\, |\, b=\sum_{y_i\in \sigma} a_i y_i, 0\leq a_i< 1\right\},$$
and let $\text{Gen}(\sigma)\subset \text{SBox}(\sigma)$ be the set of elements which cannot be generated by other lattice points in $\sigma$. The union $\text{Gen}(\Sigma):= \cup_{\sigma\in \Sigma}\text{Gen}(\sigma)$ is a finite set and we write $\text{Gen}(\Sigma)=\{y_{N+1},...,y_{M}\}$. For each $y_i$ we introduce a variable $X_i$. 
\begin{thm}[See Theorem \ref{thm:QH}]
There is an isomorphism of graded rings
$$\frac{\Lambda[X_1,...,X_M]}{Clos_{\mathfrak{v}_{T}}(\<\mathfrak{P}_{\xi} |\xi=1,...,n\> + SR_{\omega} + \mathcal{J}(\Sigma))} \simeq  QH_{orb}^*(\X,\Lambda),$$
where
\begin{enumerate}
\item $\Lambda$ is the Novikov ring in Definition \ref{dfn:Novikov};

\item The symbol $Clos_{\mathfrak{v}_{T}}(-)$ indicates that the closure with respect to the valuation $\mathfrak{v}_T$ in (\ref{dfn:valuation}).

\item $\mathfrak{P}_{\xi}\in \Lambda[X_{1},...,X_{M}], \xi =1,2,...,n$ are constructed in Theorem \ref{thm:QH};

\item $SR_\omega$ is the {\em quantum Stanley-Reisner ideal} in Definition \ref{dfn:QSR}.

\item $\mathcal{J}(\Sigfan)$ is the {\em cone ideal} defined in (\ref{eqn:cone_ideal}).

\end{enumerate}

\end{thm}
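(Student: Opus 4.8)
The plan is to realize the stated isomorphism as the descent of a single explicit $\Lambda$-algebra homomorphism and then to identify its kernel with the displayed closed ideal. First I would define
\[
\Phi\colon \Lambda[X_1,\dots,X_M]\longrightarrow QH_{orb}^*(\X,\Lambda),\qquad X_i\longmapsto S_{y_i},
\]
where $S_{y_i}$ is the Seidel element attached to the $\mathbb{C}^\times$-action determined by $y_i\in\mathbf{N}$. The essential input, which I would establish first (presumably by localization on the total space of the associated Hamiltonian fibration over $S^2$), is a normal form for each $S_{y_i}$: it equals a distinguished classical class plus terms of strictly larger $\mathfrak{v}_T$-valuation. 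For $i\le N$ the leading term is the toric divisor class $D_i$ dual to the ray generated by $y_i$; for $N<i\le M$ it is the fundamental class of the twisted sector indexed by the box element $y_i\in\text{Gen}(\Sigma)$. This leading-term control is what couples the algebra to the known classical presentation.

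Next I would check that the three families of generators lie in $\ker\Phi$, using the homomorphism property of $\mathcal{S}$ as the sole engine. The linear relations $\mathfrak{P}_\xi$ ($\xi=1,\dots,n$) encode additive relations in $\mathbf{N}$: any identity of the form $\sum_i a_i y_i=0$ forces the matching product of Seidel elements to equal the unit up to the Novikov factor recording symplectic areas, and clearing this factor yields precisely $\Phi(\mathfrak{P}_\xi)=0$. The quantum Stanley–Reisner relations in $SR_\omega$ arise from collections of rays that do not span a cone of $\Sigma$: the product of the corresponding Seidel elements, evaluated through $\mathcal{S}$, produces exactly the quantum-corrected monomials, so $\Phi$ kills $SR_\omega$. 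Finally, for each box element $b=\sum a_i y_i\in\text{Gen}(\sigma)$ the cone ideal $\mathcal{J}(\Sigfan)$ records the compatibility between the generator $X_b$ and the monomial $\prod X_i^{a_i}$; this again follows from $v_1+\dots+v_k=0\Rightarrow S_{v_1}*\cdots*S_{v_k}=1$ applied to the lattice relation defining $b$. Hence $\Phi$ descends to the quotient.

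Surjectivity I would handle by passing to $\mathfrak{v}_T$-associated graded rings. The leading term of $\Phi(X_i)$ is the classical divisor or twisted-sector class, and by the Borisov–Chen–Smith presentation of the Chen–Ruan cohomology $H_{orb}^*(\X)$ of the toric Deligne–Mumford stack these classes generate $H_{orb}^*(\X)$ as a ring. Thus the induced map on associated graded pieces is surjective, and because $QH_{orb}^*(\X,\Lambda)$ is complete and separated for the valuation $\mathfrak{v}_T$, a completeness (Nakayama-type) argument lifts this to surjectivity of $\Phi$ itself.

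The main obstacle will be the kernel computation, and I expect the cleanest route is once more through the associated graded. The goal is to show that the leading terms of $\mathfrak{P}_\xi$, $SR_\omega$, and $\mathcal{J}(\Sigfan)$ reproduce, with no loss, the classical linear relations, Stanley–Reisner relations, and box relations of the Borisov–Chen–Smith presentation; granting this, the $\mathfrak{v}_T$-associated graded of the source quotient is identified with $H_{orb}^*(\X)$. An inductive argument on $\mathfrak{v}_T$-filtration degree then finishes: given an element of $\ker\Phi$, its leading term must lie in the classical ideal, so it is congruent modulo the generated ideal to something of strictly higher valuation, and iterating produces a convergent correction lying in the generated ideal. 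This is exactly why the closure $Clos_{\mathfrak{v}_T}(-)$ is unavoidable — the raw ideal generated by the three families need not be closed, and only its $\mathfrak{v}_T$-completion can equal the kernel of a map into the complete ring $QH_{orb}^*(\X,\Lambda)$. The delicate, genuinely hard step throughout is tracking the higher Novikov corrections in the Seidel elements precisely enough to guarantee that the leading-term ideal is \emph{exactly} the classical one, since any undetected extra leading relation would shrink the source quotient and break injectivity.
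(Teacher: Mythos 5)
Your overall architecture matches the paper's: map $X_i$ to (suitably normalized) Seidel elements, verify the generators of the displayed ideal die, get surjectivity from a leading-term/completeness iteration, and identify the kernel by an induction on $\mathfrak{v}_T$-valuation against the classical Borisov--Chen--Smith presentation (the paper packages this last step as Lemma \ref{alglemma}, an orbifold version of McDuff--Tolman's Lemma 5.1). However, there is a genuine gap in your treatment of the linear relations $\mathfrak{P}_\xi$. You claim that an additive relation $\sum_i a_i y_i=0$ in $\mathbf{N}$ "forces the matching product of Seidel elements to equal the unit up to a Novikov factor, and clearing this factor yields $\Phi(\mathfrak{P}_\xi)=0$." The homomorphism property of $\mathcal{S}$ converts addition in $\mathbf{N}$ into \emph{multiplication} in $QH_{orb}^*(\X,\Lambda)$; it can only produce monomial identities of the form $\prod_k \tilde{\mathcal{S}}_k - q^{C}T^{\Omega}\prod_j \tilde{\mathcal{S}}_j^{c_j}=0$, which is exactly how the paper obtains $SR_\omega$ and $\mathcal{J}(\Sigfan)$ (Theorem \ref{qSR_seidel}). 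It cannot produce the \emph{additive} relations $\mathfrak{P}_\xi^{(0)}=\sum_i\langle e_\xi,b_i\rangle X_i^{m_i}$, which are sums of monomials coming from the ideal $\mathcal{I}(\Sigfan)$ in the classical presentation. This is also why the theorem says the $\mathfrak{P}_\xi$ are "constructed in Theorem \ref{thm:QH}": they are not known in advance and cannot be verified to lie in the kernel; they must be \emph{built} so as to lie in the kernel. The paper does this by evaluating $\mathfrak{P}_\xi^{(0)}$ at the reduced Seidel elements, observing the result is $\mathfrak{P}_\xi^{(0)}(X_1,\dots,X_M)+R^1T^{\lambda_1}+h.o.t.$, setting $\mathfrak{P}_\xi^{(1)}=\mathfrak{P}_\xi^{(0)}-R^1T^{\lambda_1}$, and iterating; the limit exists because each step raises the valuation by at least the uniform gap $\delta_\X$ coming from the lower bound on symplectic areas of nonconstant holomorphic curves. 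Your proposal contains no substitute for this construction, so the family $\<\mathfrak{P}_\xi\>$ in your candidate kernel is not actually produced.

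A secondary issue: you send $X_i$ to the raw Seidel element $S_{y_i}$ rather than the reduced element $\tilde{\mathcal{S}}_i=\mathcal{S}_i\otimes q^{\sum r_{ki}}T^{\sum r_{ki}\lambda_i}$. By Theorem \ref{seidelele} the raw element has leading term $X_k$ only after multiplying by this explicit Novikov factor, and the leading term of $\tilde{\mathcal{S}}_k$ is $\lambda^{y_k}$ --- for $k\le N$ with $m_k>1$ this is a twisted-sector class (since $y_k=\tfrac{1}{m_k}b_k\in\mathrm{Box}$), not the toric divisor class $D_k$; only $X_k^{m_k}=\lambda^{b_k}$ is the divisor. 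Without the normalization and with the leading terms misidentified, the comparison of associated graded rings with Lemma \ref{CRring} does not go through as stated.
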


Quantum cohomology rings of toric orbifolds have been studied in various cases. The case of weighted projective lines is computed in \cite[Section 9]{AGV}. The quantum cohomology ring of an arbitrary weighted projective space is computed as a consequence of a mirror theorem in \cite{CCLT}. The case of orbifold projective lines with at most two cyclic orbifold points is computed in \cite{MiTs}. The quantum cohomology ring of a weak Fano toric orbifold is computed as a consequence of the mirror theorem in \cite{CCIT}. For an arbitrary toric orbifolds, the quantum cohomology ring is computed in \cite{GW} using the quantum Kirwan map \cite{W}. These previous works use various methods algebraic in nature. The method in this paper, based on Seidel representation, is symplectic.

The rest of this paper is organized as follows. Section \ref{sec:preliminary} contains reviews of preparatory materials including the basics of symplectic toric orbifolds, Chen-Ruan cohomology, and Hamiltonian loops. In Section \ref{sec:seidel} we review the results in \cite{TW} on the construction of Seidel representation for symplectic orbifolds. In Section \ref{seidelelement} we calculate Seidel elements arising from circle actions on a symplectic toric orbifold. This calculation is used in Section \ref{sec:results} to derive a presentation of quantum cohomology ring of a symplectic toric orbifold. In Section \ref{sec:Fano} we discuss the case of Fano toric orbifolds. 

%\subsection*{Convention} 
Throughout this paper, $\X(\Sigfan)$ is the $2n$-dimensional compact symplectic toric orbifold associated with a labeled polytope $\Delta$ in the sense of Section \ref{polytopeconstr}. And $\Sigfan=(\mathbf{N},\Sigma,\beta)$ is the stack fan associated to $\Delta$.

\section*{Acknowledgement}
We thank Erkao Bao, Lev Borisov, Cheol-Hyun Cho, Conan Leung, and Yong-Geun Oh for valuable discussions.

\section{Preliminary on Toric orbifolds}\label{sec:preliminary}
In this section we review some basic constructions and facts about symplectic toric orbifolds. In this paper we only consider compact toric orbifolds whose generic stabilizer group is trivial, and we limit our discussion to that case.

\subsection{Construction via Stacky Fans}\label{fanconstr}
In algebraic geometry, toric orbifold are constructed using the combinatorial object called {\em stacky fans}. In this subsection we review this construction following \cite{BCS}.

By definition, a {\em stacky fan} consists of the following data $$\Sigfan=(\mathbf{N},\Sigma,\beta),$$ where 
\begin{enumerate}
\item
$\mathbf{N}$ is a finitely generated free abelian group of rank $n$; 
\item
$\Sigma\subset \mathbf{N}_\mathbb{Q}=\mathbf{N}\otimes_{\mathbb{Z}}\mathbb{Q}$ is a complete simplicial fan, with $\rho_1,...,\rho_N$ being its $1$-dimensional cones; 
\item
$\beta: \mathbb{Z}^{N}\to \mathbf{N}$ is a map determined by the elements $\{b_{1},\cdots,b_{N}\}$ in $\mathbf{N}$ satisfing that $b_i\in \rho_i$.  More precisely, let $e_1,...,e_N\in \mathbb{Z}^N$ be the standard basis, then $\beta(e_i):=b_i$. 
\end{enumerate}

We assume that $\beta$ has finite cokernel, and $\{b_1,...,b_N\}\subset \mathbf{N} \subset \mathbf{N}_\mathbb{Q}$  generate the simplicial fan $\Sigma$. 

The toric orbifold (also known as toric Deligne-Mumford stack) $\XSigma$  associated to $\Sigfan$ is defined to be the following quotient stack 
\begin{equation}\label{def_toric_orbifold}
\XSigma:=[Z/G],
\end{equation}
whose definition may be explained as follows:
\begin{enumerate}
\item
 $Z$ is the open subvariety $\mathbb{C}^{N}\setminus\mathbb{V}(J_{\Sigma})$. Here $J_{\Sigma}$ is the irrelevant ideal of the fan, defined as follows: let $\mathbb{C}[z_1,...,z_N]$ be the coordinate ring of $\mathbb{C}^N$, then $J_\Sigma$ is the ideal of $\mathbb{C}[z_1,...,z_N]$ generated by the monomials $\prod_{\sigma_i\nsubseteq \sigma} z_i$ where $\sigma$ run through all cones in $\Sigma$. 
\item 
 $G$ is an algebraic torus defined by $G=\text{Hom}_{\mathbb{Z}}(\mathbf{N}^{\vee},\mathbb{C}^{*})$, where $\mathbf{N}^\vee$ appears in the Gale dual $\beta^{\vee}: \mathbb{Z}^{N}\to \mathbf{N}^{\vee}$ of $\beta$ (see \cite{BCS}). The $G$-action on $Z$ is given by a group homomorphism $\alpha: G \to (\mathbb{C}^{*})^{N}$ obtained by applying the functor  $\text{Hom}_{\mathbb{Z}}(-,\mathbb{C}^{*})$  to the Gale dual $\beta^{\vee}: \mathbb{Z}^{N}\to N^{\vee}$ of $\beta$.  
 
 \end{enumerate}
The quotient (\ref{def_toric_orbifold}) may be taken in different categories. In this paper we deal with symplectic toric orbifolds. For this reason we consider the quotient as a {\em differentiable stack}. Indeed the $G$-action on $Z$ defines a groupoid $G\ltimes Z:=(G\times Z\rightrightarrows Z)$ where the source and target maps are the projection to the second factor and the $G$-action respectively. It is easy to see that $G\ltimes Z$ is a proper Lie groupoid and the quotient stack (\ref{def_toric_orbifold}) is the differentiable stack associated to this Lie groupoid. 

The toric orbifold $\XSigma$ has a collection of naturally defined \'{e}tale charts which we describe. Let $\sigma\in \Sigma$ be a $k$-dimensional cone generated by $\{b_{i_{1}},...,b_{i_{k}} \}$. Define the open subset $U(\sigma)\subset \com^{N}$ as
$U(\sigma) = \{(z_{1},...,z_{N}) \in \com^{N} | z_{j}\neq 0 \ \forall j \notin \{i_{1},... ,i_{k}\}\}$. Then $U(\sigma)\subset U(\sigma')$ if $\sigma$ is contained in $\sigma'$, and $\{ U(\sigma)|\sigma\in \Sigma,\ dim\ \sigma=n\}$ is an open cover of $Z$. 

Each $n$-dimensional cone $\sigma$ induces an orbifold chart of $\XSigma$ which covers $U_{\sigma}:=U(\sigma)/G$. More explicitly, let $\mathbf{N}_{\sigma}$ be the sublattice of $\mathbf{N}$ generated by $\{b_{i_{1}},...,b_{i_{n}}\}$, $N_{\sigma}^{*}$ be the dual lattice of $\mathbf{N}_{\sigma}$, and $\{u_{j}\}_{j=1}^{n}$ be the dual basis of $\mathbf{N}_{\sigma}^{*}$ so that $\langle b_{i_{k}}, u_{j}\rangle =\delta_{k,j}$,
then we have a map from $U_{\sigma}$ to $\com^{n}$ by 
$$w_{j} =z_{1}^{\langle b_{1},u_{j}\rangle} \cdot ...\cdot z_{N}^{\langle b_{N},u_{j}\rangle},\ \ \ \ j=1,...,n.$$
The image of $U(\sigma)$ under this map, denoted as $V_{\sigma}$, carries a group action by $G_{\sigma}:=\mathbf{N}/\mathbf{N}_{\sigma}$:
$$g\cdot w_{j}=e^{2\pi i\langle \tilde{g},u_{j}\rangle}w_{j}\ \ \ \ \text{for}\ \tilde{g}\in \mathbf{N}\ \text{lifting}\ g\in \mathbf{N}/\mathbf{N}_{\sigma}, \ \ \ j=1,...,n.$$
Then $G_{\sigma}\ltimes V_{\sigma}$ defines an orbifold chart over $U_{\sigma}$.

Now if $\tau$ is a $k$-dimensional cone contained in an $n$-dimensional cone $\sigma$, then the orbit $O_{\tau}$ determined by $\tau$ has a neighborhood $U_{\tau}$, the orbifold chart $G_{\sigma}\ltimes V_{\sigma}$ restricted to $U_{\tau}$ defines an orbifold chart of $U_{\tau}$. Note that this chart is not effective. After reduction, we get an orbifold chart $G_{\tau}\ltimes V_{\tau}$, where $G_{\tau}=(N_{\tau}\otimes_{\inte} \ration)\cap \mathbf{N}\ \ /\mathbf{N}_{\tau}$, $V_{\tau}$ is an open set of $\com^{n}$.

Given an orbifold $\X$ one can consider its inertia orbifold $I\X:= \X\times_{\X\times \X} \X$, where the fiber product is taken over the diagonal map $\X\to \X\times \X$. In the toric case we can give a more combinatorial description of the inertia orbifolds, following \cite{BCS}. Let $\X(\Sigfan)$ be the toric orbifold defined by the stacky fan $\Sigfan:=(\mathbf{N},\Sigma,\beta)$. For a cone $\tau\in \Sigma$, define 
$$\text{Box}(\tau):=\left\{v\in \mathbf{N}\, |\, v=\sum_{b_i\in \tau} r_i b_i, 0\leq r_i< 1\right\},$$
and set $\text{Box}(\Sigfan):=\bigcup_{\tau\in \Sigma} \text{Box}(\tau)$.  Then we have:
\begin{enumerate}
\item The set $\text{Box}(\Sigfan)$ indexes the components of the inertia orbifold of $\X(\Sigfan)$: 
$$I\XSigma=\sqcup_{v\in \text{Box}(\Sigfan)}\X_{(v)}.$$ 
Here $\sigma(v)\in \Sigma$ is the minimal cone containing $v$ and $\X_{(v)}=\X(\Sigfan/\sigma(v))$ is the toric orbifold associated to the stacky fan $\Sigfan/\sigma(v)$ defined in \cite[Section 4]{BCS}. Each component $\X_{(v)}$ is called a {\em twisted sector}, and $\X_{(0)}\simeq \X$ is called the trivial twisted sector or untwisted sector. 
\item There is a natural involution $\mathcal{I}: I\X\to I\X$ defined by $\mathcal{I}((x,v)):=(x, v^{-1})$, where $v\in\text{Box}(\tau)$ and $v^{-1}$ is the unique element  in $\text{Box}(\tau)$ such that $v^{-1}+v \in \mathbf{N}_{\tau}$. Note that $v^{-1}$ and $-v$ are different, the later does not lie in $\text{Box}(\tau)$.
\end{enumerate}

\subsection{Symplectic Toric Orbifolds via Labelled Polytopes}\label{polytopeconstr}
Intrinsically a symplectic toric orbifold is a symplectic orbifold with a Hamiltonian action by a half-dimensional compact torus. In \cite{LT} symplectic toric orbifolds are classified by the combinatorial objects called labelled polytopes. This is a generalization of the classification of symplectic toric manifolds by Delzant polytopes. In this subsection we review the relation between symplectic toric orbifolds, labelled polytopes, and the stacky fan construction in the previous subsection. 

Let $(\X,\omega)$ be a $2n$-dimensional (compact) symplectic orbifold. There is a Hamiltonian $T^n$-action on $\mathcal{X}$ with moment map $$\Phi: \X\to \textbf{t}^*,$$  where $\textbf{t}$ is the Lie algebra of $T^{n}$ with a lattice $\textbf{l}$ and $\textbf{t}^*$ is the dual vector space of $\textbf{t}$. 
The image $\Phi(\X)$ of the moment map is a rational simple convex polytope which is defined as below:

\begin{defn}
A convex polytope $\Delta\subset \textbf{t}^*$ is rational if
$$
\Delta= \bigcap_{i=1}^N\{\alpha\in\textbf{t}^*| \<\alpha,n_i\>\le \lambda_i\}
$$
for some $n_i\in \textbf{l}$ and $\lambda_i\in \real$. 

A (closed) facet is a face of $\Delta$ of codimension one in $\Delta$. An open facet is the relative interior of a facet. An n dimensional polytope is simple if exactly $n$ facets meet at every vertex. 

A convex rational simple polytope $\Delta$ such that $dim\Delta=dim\ \textbf{t}$, plus a positive integer attached to each open facet, is called a {\em labeled polytope}. Two labeled polytopes are isomorphic if one can be mapped to the other by a translation and the corresponding open facets have the same integer labels.
\end{defn}

The following result is due to Lerman and Tolman.

\begin{thm}[\cite{LT}, Theorem 1.5]\label{thm:LT}
\hfill
\begin{enumerate}
\item A compact symplectic toric orbifold $(\mathcal{X},\omega,T,\Phi)$ naturally corresponds to a labeled polytope, namely the image of the moment map $\Phi(\mathcal{X})$, which is a rational simple polytope. For every open facet $\intF$ of $\Phi(\mathcal{X})$ there exists a positive integer $n_{\intF}$ such that the structure group of every $x\in \Phi^{-1}(\intF)$ is $\inte / n_{\intF}\inte$.
\item Two compact symplectic toric orbifolds are isomorphic if and only if their associated labeled polytopes are isomorphic.
\item Every labeled polytope can be realized as the image of the moment map for some compact symplectic toric orbifold.
\end{enumerate}
\end{thm}

Now, we consider a symplectic toric orbifold $(\mathcal{X},\omega,T,\Phi)$ determined by a polytope $\Delta \subset \textbf{t}^*$ with labels $m_i$ on facets $F_i$. Let $y_i$ be the primitive outward normal vectors of $F_i$. For each face $F$ of $\Delta$, let $\sigma_{F}\in \Sigma$ be the cone determined by the collection of vectors $\{y_{i}|F\subset F_{i}\}$. Define 
\begin{itemize}
\item $\mathbf{N}=\textbf{l}=\{\sum_{i} k_{i}y_{i}|k_{i}\in \inte\}\subset \textbf{t}$, 
\item $\Sigma=\{\sigma_{F}|F\ \text{a face of}\ \Delta\}$,
\item and $b_{i}=m_{i} y_{i}$, i.e. $\beta(e_i)=m_i y_i$.
\end{itemize}
Then the data $(\mathbf{N}, \Sigma, \{b_i\}_{i=1}^{N})$ give a stacky fan. In this way every labelled polytope gives rise to a stacky fan.

Every corner (0-dimensional face) $C$ of $\Delta$ determines a point $x\in\X$ which is fixed by the torus action. 
Let $\sigma_{C}$ be the cone corresponding to $C$, then $\sigma_{C}$ determines an orbifold chart $G_{\sigma(C)}\ltimes V_{\sigma(C)}$ covering $x$ as in Section \ref{fanconstr}. Let $\{u^{C}_{j}|j=1,...,n\}\subset \textbf{t}^{*}$ be the dual basis of $\{y_{i}|y_{i}\in \sigma(C),\ 1\le j\le N\}$. Then the following lemma is an orbifold version of Theorem 3.1.2 in \cite{dS}.
%\item In general when $F$ is a k-dimensional face, let $C$ be a corner contained in $F$, for $x\in \Phi^{-1}(\intF)$, there is a neighborhood $U_{\sigma(F)}$, and the orbifold chart $G_{\sigma(C)}\ltimes V_{\sigma(C)}$ restricted to $U_{\sigma(F)}$ defines an orbifold chart of $U_{\sigma(F)}$. After reduction, we get an orbifold chart $G_{\sigma(F)}\ltimes V_{\sigma(F)}$.
\begin{lem}\label{equivDarboux}
The point $x$  is covered by an orbifold chart $G_{C}\ltimes V_{C}$ such that
\begin{itemize}
\item $G_{C}\ltimes V_{C}$ is isomorphic to $G_{\sigma(C)}\ltimes V_{\sigma(C)}$;
\item the symplectic form can be written as 
$$\omega=\sum_{j=1}^{n}dp_{j}\wedge dq_{j}\ \ \ with\ w_{j}=p_{j}+iq_{j};$$ 
\item The moment map can be written as 
$$\Phi(p_{1},...,p_{n};q_{1},...,q_{n})=\Phi(x)-\sum_{j=1}^{n} u^{C}_{j}(p_{j}^{2}+q_{j}^{2}).$$
\end{itemize}
\end{lem}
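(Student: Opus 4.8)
The plan is to establish the Darboux-type normal form by adapting the standard equivariant Darboux theorem (Theorem 3.1.2 in \cite{dS}) to the orbifold setting. The key observation is that near a torus-fixed point $x$ corresponding to a corner $C$, the orbifold chart $G_{\sigma(C)}\ltimes V_{\sigma(C)}$ constructed in Section \ref{fanconstr} lifts the local $T^n$-action to a linear (Hamiltonian) action on an open set of $\com^n$, and the whole statement is really a statement about this linear local model upstairs, which then descends to the quotient. **First I would** exhibit the local uniformizing chart: since $x$ is fixed by the $T^n$-action and the stabilizer $G_{\sigma(C)}$ acts on $V_{\sigma(C)}\subset\com^n$ by the prescribed diagonal unitary representation $w_j\mapsto e^{2\pi i\langle\tilde g,u_j^C\rangle}w_j$, the composition $T^n\times G_{\sigma(C)}$ acts on $V_{\sigma(C)}$ and the moment map descends from the chart. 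The coordinates $w_j=p_j+iq_j$ provide the complex coordinates in which the model will be expressed.

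**Next I would** apply the equivariant Darboux theorem on the chart. Working on $V_{\sigma(C)}$ equipped with the pulled-back symplectic form $\omega$ and the lifted linear torus action, the classical equivariant Darboux theorem (which holds on the smooth manifold $V_{\sigma(C)}$ for the compact group action by the full local torus) produces equivariant coordinates in which $\omega=\sum_j dp_j\wedge dq_j$. The point is to run this argument $G_{\sigma(C)}$-equivariantly as well, so that the resulting coordinates are compatible with the finite group action and hence descend to the orbifold chart; because $G_{\sigma(C)}$ is finite and acts linearly, one averages over $G_{\sigma(C)}$ in the Moser homotopy to keep everything invariant, which is automatic in the linearized model. **Then** the moment-map formula follows by direct computation: in the Darboux coordinates the $T^n$-action is the standard linear Hamiltonian action with weights read off from the dual basis $\{u_j^C\}$, so its moment map is the standard quadratic $\sum_j u_j^C(p_j^2+q_j^2)$ up to the constant $\Phi(x)$, with the sign dictated by the fact that the $y_i$ are outward normals. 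The verification that the weights are exactly the dual basis vectors $u_j^C$ uses the pairing $\langle b_{i_k},u_j\rangle=\delta_{k,j}$ established in Section \ref{fanconstr}.

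**The hard part will be** the bookkeeping that makes the local linear model genuinely $G_{\sigma(C)}$-equivariant throughout the Moser argument, so that the normal form is an isomorphism of orbifold charts $G_C\ltimes V_C\cong G_{\sigma(C)}\ltimes V_{\sigma(C)}$ rather than merely a diffeomorphism of underlying spaces; one must ensure the Moser vector field is invariant under the finite stabilizer and that the resulting flow intertwines the two chart structures. A secondary subtlety is correctly matching the combinatorial data (outward normals $y_i$, labels $m_i$, dual basis $u_j^C$) to the analytic weights of the torus action so that the signs and the quadratic coefficients in the moment map come out exactly as stated; this is where the conventions of the stacky-fan construction in Section \ref{fanconstr} and the polytope construction in Section \ref{polytopeconstr} must be reconciled carefully.
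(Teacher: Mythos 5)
The paper gives no proof of this lemma at all---it simply asserts it as ``an orbifold version of Theorem 3.1.2 in \cite{dS}''---so your sketch is supplying exactly the argument the authors leave implicit, and it is the right one: lift everything to the uniformizing chart $V_{\sigma(C)}\subset\com^n$, run the equivariant Darboux/Moser argument there, and check that the linear weights of the torus action are the dual basis vectors $u^C_j$ via $\langle b_{i_k},u_j\rangle=\delta_{k,j}$. One small simplification you could note: since $G_{\sigma(C)}=\mathbf{N}/\mathbf{N}_{\sigma(C)}$ acts on $V_{\sigma(C)}$ through the (lifted) torus itself, making the Moser homotopy equivariant for the full compact torus automatically makes it $G_{\sigma(C)}$-equivariant, so no separate averaging over the finite group is needed and the resulting coordinates manifestly give an isomorphism of orbifold charts $G_C\ltimes V_C\cong G_{\sigma(C)}\ltimes V_{\sigma(C)}$.
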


\subsection{Chen-Ruan orbifold cohomology of toric orbifolds}\label{toricQH}
In this subsection we describe the calculation of the Chen-Ruan orbifold cohomology ring of toric orbifolds, following \cite{BCS}\footnote{Strictly speaking what is computed in \cite{BCS} is the orbifold Chow ring. However the computation for Chen-Ruan orbifold cohomology ring is identical.}. 

Let $\XSigma$ be a compact symplectic toric orbifold given by a stacky fan $\Sigfan=(\mathbf{N},\Sigma,\beta)$. Recall that the Chen-Ruan orbifold cohomology $H_{CR}^{*}(\XSigma, \ration)$ is defined as the direct sum of the cohomology groups of its inertia orbifold with a shifted grading:  For $v=\sum_{b_{i}\in \sigma(v)} r_{i} b_{i}\in  \text{Box}(\Sigma)$. The corresponding twisted sector $\X_{(v)}$ is associated with a number $\iota_{v}=\sum_{b_i\in \sigma(v)} r_{i}$ called age or degree shifting number. We have
\begin{equation*}
H_{CR}^{*}(\XSigma,\ration)=\oplus_{v\in \text{Box}(\Sigfan)} H^{*-2\iota_{v}}(\X_{(v)},\ration).
\end{equation*}
There is a product on $H_{CR}^{*}(\XSigma,\ration)$, called the Chen-Ruan cup product, which is defined using genus $0$ degree $0$ three-point Gromov-Witten invariants of $\XSigma$. We refer to \cite{CR1} for more details of the definition of this product. This construction makes $H_{CR}^{*}(\XSigma,\ration)$ into a graded algebra.
 
Let $\mathbf{M}=\mathbf{N}^*=\text{Hom}_\mathbb{Z}(\mathbf{N}, \mathbb{Z})$ be the dual of $\mathbf{N}$. Let $\ration[\mathbf{N}]^{\Sigfan}$ be the group ring of $\mathbf{N}$, i.e. $\ration[\mathbf{N}]^{\Sigfan}:=\bigoplus_{c\in \mathbf{N}}\ration \lambda^{v}$, $\lambda$ is the formal variable. A $\ration$-grading on $\ration[\mathbf{N}]^{\Sigfan}$ is defined as follows. For $v\in \mathbf{N}$, if $v =\sum_{b_i\in \sigma(v)}r_i b_i$ where $\sigma(v)$ is the minimal cone in $\Sigma$ containing ${v}$ and $r_i$ are nonnegative rational numbers, then we define 
\begin{equation}\label{defn:age-grading}
\text{deg}\, (\lambda^v):=\sum_{b_i\in \sigma(v)}r_i.
\end{equation}

Define the following multiplication on $\ration[N]^{\Sigfan}$:
\begin{equation}\label{product}
\lambda^{v_{1}}\cdot \lambda^{v_{2}}:=\begin{cases}\lambda^{v_{1}+v_{2}}&\text{if
there is a cone}~ \sigma\in\Sigma ~\text{such that}~ {v}_{1}, {v}_{2}\in\sigma\,,\\
0&\text{otherwise}\,.\end{cases}
\end{equation}
Let $\mathcal{I}(\Sigfan)$ be the ideal in
$\ration[\mathbf{N}]^{\Sigfan}$  generated by the elements $\sum_{i=1}^{n}\theta(b_{i})\lambda^{b_{i}}, \theta\in \mathbf{M}$. Then by \cite[Theorem 1.1]{BCS}, there is an isomorphism of $\ration$-graded algebras:
\begin{equation}\label{Horb_presentation}
H_{CR}^{*}\left(\XSigma, \ration \right)\cong \frac{\ration[\mathbf{N}]^{\Sigfan}}{\mathcal{I}(\Sigfan)}.
\end{equation}

%Let $\ration[N]^{\Sigma}=\oplus_{c\in N}Q \lambda^c$ as a vector space, where $\lambda$ is a formal variable. The ring structure on $\ration[N]^{\Sigma}$ is defined as following: For $c\in N$, $\bar{c}$ denotes the image of $c$ in $\ration\otimes_{\inte} N$. If there is $\sigma\in\Sigma$ which contains both $\bar{c_1}$ and $\bar{c_2}$, then $\lambda^{c_1}\cdot \lambda^{c_2}:=\lambda^{c_1+c_2}$; otherwise $\lambda^{c_1}\cdot \lambda^{c_2}:=0$. Let $b_i=\beta(e_i)$. If $\bar{c}=\sum_{\bar{b}_i\in \sigma}m_i \bar{b}_i$ where $\sigma$ is the minimal cone in $\Sigma$ containing $\bar{c}$ and $m_i$ is a nonnegative rational number, then the $\ration$-grading is given by $deg(\lambda^c):=\sum_{\bar{b}_i\in\sigma}m_i$.\\

%Borisov, Chen, and Smith computed the orbifold chow ring of $Y(\bold{\Sigma})$:
%
%\begin{thm}[Theorem1.1 in (BCS)]\label{BCS}
%$$ 
%A_{orb}^*(Y(\bold{\Sigma}))=\frac{\ration[N]^{\Sigma}}{<\sum_{i=1}^n f(b_i)\lambda^{b_i}:f\in Hom(N,\inte)>}
%$$
%\end{thm}
%
%Goldin, Holm and Knutson proved $H^*_{orb}(\mathcal{X},\ration)\cong A_{orb}^*(Y(\bold{\Sigma}))$.

We will rewrite (\ref{Horb_presentation}) in terms of a quotient of polynomial ring by some ideal. Denote $X_i=\lambda^{y_i}$ for $i=1,..., N$.

Let 
$$\text{SBox}(\sigma):=\left\{b\in N\, |\, b=\sum_{y_i\in \sigma} a_i y_i, 0\leq a_i< 1\right\},$$
and 
$\text{Gen}(\sigma)\subset \text{SBox}(\sigma) $ 
be the set of {\em minimal elements}, which means these elements cannot be generated by other lattice points in the cone $\sigma$. Obviously if $\sigma\subset\sigma'$ then $\text{Gen}(\sigma)\subset \text{Gen}(\sigma')$.

Then $\text{Gen}(\Sigma):= \cup_{\sigma\in \Sigma}\text{Gen}(\sigma)$ is a finite set. For convenience, we denote $\text{Gen}(\Sigma)=\{y_{N+1},...,y_{M}\}$, and define $X_{N+1}=\lambda^{y_{N+1}},...,X_{M}=\lambda^{y_{M}}$. We call $I\subset\{1,...,M\}$ a {\em generalized primitive collection} if 
\begin{itemize}
\item $\{y_{i}|i\in I\}$ is not contained in a cone,
\item any proper subset of $\{y_{i}|i\in I\}$ is contained in some cone.
\end{itemize}
Note that when a generalized primitive collection $I$ is a subset of  $\{1,...N\}$, then it is a primitive collection in the sense of \cite{Ba}. Denote by $\mathcal{GP}$ the set of all generalized primitive collections.

For top-dimensional cones $\sigma_{j}$, $j=1,...,N$, 
%let $$S(\sigma_{j}):=\lbrace\vec{t}=(t_{i})|\sum_{y_{i}\in \sigma_{j}} t_{i}y_{i}=0,\ t_{i}\in \inte, \vec{t}\neq \vec{0}\rbrace.$$ 
%Let $GS(\sigma_{j})=\{\vec{t}_{l}=(t_{l,i})\}$ be the minimal generating set of $S(\sigma_{j})$. 
%existence
define an ideal 
$$\mathcal{J}(\sigma_{j}):=\left\langle\prod_{t_{i}>0, y_{i}\in \sigma_{j}} X_{i}^{t_{i}}-\prod_{t_{i}<0, y_{i}\in \sigma_{j}} X_{i}^{-t_{i}}|\sum_{y_{i}\in \sigma_{j}} t_{i}y_{i}=0,\ t_{i}\in \inte, \vec{t}\neq \vec{0}\right\rangle$$
% $y_{i}=\sum_{1\le j\le N, y_{j}\in \sigma_{i}} a_{i,j}y_{j}$, where $a_{i,j}\in \ration$. Let $\text{cmd}(i)$ be the smallest positive integer such that $\text{cmd}(i)a_{i,j}$ is integer for all $j$. Then $\text{cmd}(i)y_{i}=\sum_{1\le j\le N, y_{j}\in \sigma_{i}} \text{cmd}(i)a_{i,j}y_{j}$ defines a relation of $X_{i}$'s:
%$$\prod_{t_{l,i}>0, y_{i}\in \sigma_{j}} X_{i}^{t_{l,i}}=\prod_{t_{l,i}<0, y_{i}\in \sigma_{j}} X_{i}^{-t_{l,i}}$$
Set 
\begin{equation}\label{eqn:cone_ideal}
\mathcal{J}(\Sigfan)=\sum_{j=1}^{N} \mathcal{J}(\sigma_{j}).
\end{equation}
 We call $\mathcal{J}(\Sigfan)$ the {\em cone ideal}, and elements in $\mathcal{J}(\Sigfan)$ the cone relations.
% as the ideal generated by $\{X_{i}^{\text{cmd}(i)}-\prod_{1\le j\le N, y_{j}\in \sigma_{i}} X_{j}^{\text{cmd}(i)a_{i,j}}|i=N+1,...,M\}$.

Now the Chen-Ruan cohomology can be rewritten as: 
$$H^*_{CR}(\mathcal{X},\ration)\cong\frac{\ration[[X_1,...,X_M]]}{\mathcal{I}(\Sigfan)+ \<\Pi_{i\in I} X_i: I\in\mathcal{GP} \>+ \mathcal{J}(\Sigfan)}.$$
Moreover, since we know $H^*_{CR}(\mathcal{X},\ration)$ is finite dimensional, a monomial in $X_{1},...,X_{M}$ in the right side vanishs if its degree is large enough. Thus we have the following lemma:
\begin{lem}\label{CRring}
$$H^*_{CR}(\mathcal{X},\ration)\cong\frac{\ration[X_1,...,X_M]}{\mathcal{I}(\Sigfan)+ \<\Pi_{i\in I} X_i: I\in\mathcal{GP} \>+ \mathcal{J}(\Sigfan)}.$$
\end{lem}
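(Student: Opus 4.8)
The plan is to deduce Lemma \ref{CRring} from the presentation \eqref{Horb_presentation}, namely the isomorphism $H^*_{CR}(\XSigma,\ration)\cong \ration[\mathbf{N}]^{\Sigfan}/\mathcal{I}(\Sigfan)$, by exhibiting an explicit surjective ring homomorphism from the polynomial ring $\ration[X_1,\dots,X_M]$ onto $\ration[\mathbf{N}]^{\Sigfan}/\mathcal{I}(\Sigfan)$ and then identifying its kernel with the stated ideal. First I would define the map $\Psi:\ration[X_1,\dots,X_M]\to \ration[\mathbf{N}]^{\Sigfan}/\mathcal{I}(\Sigfan)$ by sending $X_i\mapsto \lambda^{y_i}$, using that $y_1,\dots,y_N$ are the ray generators and $y_{N+1},\dots,y_M$ enumerate $\text{Gen}(\Sigma)$. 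Surjectivity requires showing that every class $\lambda^v$ for $v\in\mathbf{N}$ lies in the image: writing $v=\sum_{y_i\in\sigma(v)} a_i y_i$, one splits $v$ into its integral part (a nonnegative combination of the ray generators $y_i$, $i\le N$) and a fractional part lying in $\text{SBox}(\sigma(v))$, and then uses that each element of $\text{SBox}(\sigma)$ is, by definition of $\text{Gen}(\sigma)$, a sum (within the cone) of minimal generators $y_{N+1},\dots,y_M$. Because of the multiplication rule \eqref{product}, a product $\lambda^{v_1}\cdots\lambda^{v_k}$ of such factors equals $\lambda^{v_1+\cdots+v_k}$ precisely when the summands lie in a common cone, which is exactly the situation that arises in rebuilding $\lambda^v$; this gives surjectivity.

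Next I would verify that the three families of generators on the right-hand side lie in $\ker\Psi$, so that $\Psi$ descends to a map on the quotient. The elements $\Pi_{i\in I}X_i$ for a generalized primitive collection $I$ map to $\lambda^{y_{i_1}}\cdots\lambda^{y_{i_k}}$, which is $0$ in $\ration[\mathbf{N}]^{\Sigfan}/\mathcal{I}(\Sigfan)$ because, by the first defining property of $I$, the set $\{y_i\mid i\in I\}$ is not contained in any cone, so the product vanishes by \eqref{product}. The generators of $\mathcal{J}(\Sigfan)$ map to differences $\lambda^{\sum_{t_i>0}t_i y_i}-\lambda^{\sum_{t_i<0}(-t_i)y_i}$ arising from an integral relation $\sum_{y_i\in\sigma_j} t_i y_i=0$; since both monomials have all factors inside the single cone $\sigma_j$, both equal $\lambda$ of the common lattice vector $\sum_{t_i>0}t_i y_i=\sum_{t_i<0}(-t_i)y_i$, so the difference is $0$. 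Finally, the image of $\mathcal{I}(\Sigfan)$ under the evident identification matches the ideal generated by $\sum_i \theta(b_i)\lambda^{b_i}$, $\theta\in\mathbf{M}$, so this family is carried into the zero ideal by construction.

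It then remains to prove the reverse inclusion, namely that $\ker\Psi$ is contained in $\mathcal{I}(\Sigfan)+\langle \Pi_{i\in I}X_i : I\in\mathcal{GP}\rangle+\mathcal{J}(\Sigfan)$; this is the main obstacle. The strategy is a normal-form argument: I would show that modulo the Stanley--Reisner part $\langle \Pi_{i\in I}X_i\rangle$ every polynomial reduces to a $\ration$-linear combination of monomials $\prod_i X_i^{c_i}$ whose support $\{y_i : c_i>0\}$ is contained in a single cone, and that modulo the cone ideal $\mathcal{J}(\Sigfan)$ any two such monomials representing the same lattice point $\sum_i c_i y_i\in\mathbf{N}$ are identified. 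Concretely, two supported-in-a-cone monomials map to the same $\lambda^v$ iff their exponent vectors differ by an integral relation $\sum t_i y_i=0$ supported in a common top-dimensional cone, and such differences are exactly the generators of $\mathcal{J}(\sigma_j)$; an induction on the total degree of the relation, factoring out $\gcd$'s and using that $\Sigma$ is simplicial so relations are controlled cone-by-cone, reduces an arbitrary such difference to a sum of the stated generators. Combining this monomial normal form with the already-identified image of $\mathcal{I}(\Sigfan)$, any element of $\ker\Psi$ is congruent modulo the three ideals to an element of $\ration[\mathbf{N}]^{\Sigfan}$ lying in $\mathcal{I}(\Sigfan)$, giving the desired inclusion. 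The passage from power series to polynomials in the final statement is then immediate from finite-dimensionality of $H^*_{CR}(\X,\ration)$, as noted before the lemma: all sufficiently high-degree monomials already vanish, so the completed and uncompleted presentations agree.
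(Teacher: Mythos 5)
Your proposal is correct and follows the same route as the paper, which simply asserts this presentation as a rewriting of the Borisov--Chen--Smith isomorphism (\ref{Horb_presentation}) and invokes finite-dimensionality of $H^*_{CR}(\X,\ration)$ to pass from power series to polynomials; you have supplied the details (the map $X_i\mapsto\lambda^{y_i}$, surjectivity via the Hilbert-basis decomposition, and the identification of the kernel) that the paper leaves implicit. The only minor imprecision is in your surjectivity step: an element of $\text{SBox}(\sigma)$ decomposes as a nonnegative integer combination of ray generators \emph{and} elements of $\text{Gen}(\sigma)$, not of the minimal generators $y_{N+1},\dots,y_M$ alone, but the induction terminates all the same and the conclusion is unaffected.
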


\begin{rmk}
The grading of $\ration[X_1,...X_M]$ is not the usual grading of polynomial ring.
\end{rmk}

\subsection{Hamiltonian Loops of Toric Orbifolds}\label{hamloop}
Now, we consider a symplectic orbifold $(\mathcal{X},\omega,T,\Phi)$ determined by a polytope $\Delta \subset \textbf{t}^*$ with labels $m_i$ on facets $F_i$. In this case, we give an explicit description of Hamiltonian loops defined in \cite[Section 2.3]{TW}.

Every integer vector $v\in \textbf{l}\subset \textbf{t}$ determines a Hamiltonian function $$H_{v}: \X \to \real,\quad H_{v}(x):=\langle\Phi(x),v\rangle,$$ whose flow determines a Hamiltonian loop $\phi_{v}: I\times \X\to \X$ with some 2-morphism $\phi_{v}|_{\{0\}\times \X}\Rightarrow \phi_{v}|_{\{1\}\times \X}$. Let $\phi_{v_{1}}$, $\phi_{v_{2}}$ be the Hamiltonian loops determined by $v, v'\in \textbf{l}$, then the composition product of the two loop $\phi_{v_{1}}\cdot_{cp}\phi_{v_{2}}$ is generated by $$H(x)=\langle\Phi(x),v_{1}\rangle+\langle\Phi(\phi_{v_{1}}(-t,x)),v_{2}\rangle=\langle\Phi(x),v_{1}\rangle+\langle\Phi(x),v_{2}\rangle,$$ where the second equality holds because the moment map is invariant under the torus action. So $H(x)=\langle\Phi(x),v_{1}+v_{2}\rangle$, and $\phi_{v_{1}}\cdot_{cp}\phi_{v_{2}}$ is determined by the vector $v_{1}+v_{2}\in \textbf{l}$.

Now we give a local description of Hamiltonian functions and Hamiltonian loops. Let $C$ be a corner of $\Delta$, then it determines  a chart $G_{C}\ltimes V_{C}$ centered at the fixed point $x_{C}$ corresponding to $C$ as in Lemma \ref{equivDarboux}. On this chart, the Hamiltonian loop generated by a vector $v\in \textbf{l}$ can be represented by a groupoid morphism 
\begin{eqnarray*}
\gamma & & :[0,1]\times G_{C}\ltimes V_{C}\to  G_{C}\ltimes V_{C},\\
\gamma_{0}(t,\vec{w}) & & =D(tv)\vec{w},\\
\gamma_{0}(t,\vec{w}\xrightarrow{h} h\cdot \vec{w}) & & =(D(tv)\vec{w} \xrightarrow{h} h\cdot D(tv)\vec{w}).
\end{eqnarray*}
where  $\vec{w}=(w_{1},...,w_{n})$, and $D(tv)$ is the diagonal matrix $diag(e^{-i2\pi\langle u_{1},tv\rangle},...,e^{-i2\pi\langle u_{n},tv\rangle})$.

\begin{lem}\label{loopatlas}
The natural transformation $\alpha_{v}=(\gamma|_{\{1\}\times G_{C}\ltimes V_{C}} \Rightarrow Id_{G_{C}\ltimes V_{C}})$ is given by
\begin{eqnarray*}
\alpha_{v}  : V_{C} & & \to\ \  G_{C},\\
 x & & \to \ \ D(-v):=diag(e^{-i2\pi\langle u_{1},v\rangle},...,e^{-i2\pi\langle u_{n},v\rangle}).
\end{eqnarray*}
\end{lem}

\begin{example}
Consider the weighted projective line $\com P(1,k):=\com^{2}\setminus \{0\}/\com^{*}$, where $\com^{*}$ acts on $\com^{2}\setminus \{0\}$ by $z\cdot (w_{1},w_{2})=(zw_{1},z^{k}w_{2})$. The corresponding moment polytope is a line segment with one end labeled with $k$.
\begin{figure}[htb!]\label{fig_CP1k}
\centering
\begin{tikzpicture}
 %\draw (-4, 0) node[above] {};
 \draw (-1,0) node[left] {1} -- (1,0) node[right] {k};
 \draw (0, 0) node[above] {0};
 \draw (0, -0.2) node[below] {Labelled Polytope};
 \draw[-triangle 90] (5,0) node[above] {0} -- (4,0) node[below] {-1};
  \draw (6, -0.2) node[below] {Stacky Fan};
\draw (5,0) --  (6.2,0);        
\draw[dashed] (6.2,0) --  (6.8,0);        
\draw[-triangle 90] (6.8,0) --  (8,0) node[below] {k};     
\draw (6,0) node[above] {$v$};    
 \fill[black, fill opacity=0] (0,0) circle (0.03);      
 \fill[black, fill opacity=0] (-1,0) circle (0.05);        
 \fill[black, fill opacity=0] (1,0) circle (0.05);        
 \fill[black, fill opacity=0] (4,0) circle (0.05);  
  \fill[black, fill opacity=0] (5,0) circle (0.05);        
 \fill[black, fill opacity=0] (6,0) circle (0.05);
 \fill[black, fill opacity=0] (7,0) circle (0.05);        
 \fill[black, fill opacity=0] (8,0) circle (0.05);  
\end{tikzpicture} 
\caption{Labelled Moment Polytope and Stacky Fan of $\com P(1,k)$.}
\end{figure}

The point $[0,1]$ has non-trivial isotropy group $\inte_{k}$. There is a groupoid chart $\inte_{k}\ltimes \com$ covering $|\com(1,k)|\setminus \{[1,0]\}$, such that the Hamiltonian loop determined by the vector $v$ in Figure 1 is given by 
\begin{eqnarray*}
\gamma & & :[0,1]\times \inte_{k}\ltimes \com\to  \inte_{k}\ltimes \com,\\
\gamma_{0}(t,w) & & =e^{\frac{2\pi}{k} it}w,\\
\gamma_{0}(t,w\xrightarrow{h} h\cdot w) & & =(e^{\frac{2\pi}{k} it} w \xrightarrow{h} e^{\frac{2\pi}{k} it} w).
\end{eqnarray*}

\end{example}
%In particular when $v\in \text{Gen}(\Sigma)$,

\subsection{Hamiltonian Orbifiber Bundles over Sphere}\label{orbifiberSec}
Given a Hamiltonian loop, one can construct a Hamiltonian orbifiber bundle over $S^{2}$ as explained in \cite[Section 2.5]{TW}. In this section we will consider the Hamiltonian orbifiber bundles determined by $v\in \text{Gen}(\Sigma)$, because later we shall compute Seidel element for these corresponding loops. 

Denote by $H_{v}=\langle \Phi,v\rangle:\ \X\to \real$ the Hamiltonian function, $\phi_{v}$ the Hamiltonian loop determined by $H_{v}$, and $\E_{v}$ the corresponding orbifiber bundle. If $H_{v}$ attends the maximum at $x\in \X$, then $x$ is fixed by the Hamiltonian loop, thus $|x|\in |\X|$ determines a section $s_{x}$ of the topological fiber bundle $|\E_{v}|\to S^{2}$ underlying $\E_{v}\to S^{2}$. The main purpose of this section is to study the properties of sectional morphisms in $\E_{v}$ which lift $s_{x}$. 

It is easy to see that if $H_{v}$ attends the maximum at $x$, then $\Phi(x)\in F_{v}$, where $F_{v}$ is the face of $\Delta$ determined by the minimal cone containing $v$ according to the cone-face correspondence in Section \ref{polytopeconstr}.

Let $C$ be a corner of $\Delta$ such that $C\subset F_{v}$, and let $x_{C}$ be the fixed point whose image under the moment map is $C$. Recall from Section \ref{polytopeconstr} that there is a chart $G_{C}\ltimes V_{C}$ centered at $x_{C}$. The orbifold $[G_{C}\ltimes V_{C}]$ can be regarded as an open suborbifold of $\X$ containing $x$ and $x_{C}$. The Hamiltonian loop $\gamma_{v}$ restricts to a Hamiltonian loop of $[G_{C}\ltimes V_{C}]$. Then we have an orbifiber bundle $\E_{v}(C)\to S^{2}$ determined by $\gamma_{v}$ with fiber $[G_{C}\ltimes V_{C}]$. Then $\E_{v}(C)$ is an open suborbifold of $\E_{v}$. We will study sectional morphisms in $\E_{v}$ lifting $s_{x}$ inside $\E_{v}(C)$ since it has a nice groupoid chart.

Recall from the Section \ref{hamloop} that the Hamiltonian loop $\phi_{v}$ restricts to a Hamiltonian loop of $[G_{C}\ltimes V_{C}]$ represented by:
\begin{eqnarray*}
\gamma & & :[0,1]\times G_{C}\ltimes V_{C}\to  G_{C}\ltimes V_{C},\\
\gamma_{0}(t,\vec{w}) & & =D(tv)\vec{w},\\
\gamma_{0}(t,\vec{w}\xrightarrow{h} h\cdot \vec{w}) & & =(D(tv)\vec{w} \xrightarrow{h} h\cdot D(tv)\vec{w}).
\end{eqnarray*}
Let 
$$U_{L}:= \{e^{2\pi it}|t\in (0,1)\}=
\begin{aligned}
\begin{tikzpicture}
 \def\R{1} % radius
 \draw[thick] (0.5*\R,0) arc (7:355:0.5*\R);
\filldraw[fill=white, draw=black, fill opacity=0] (0.5*\R, 0) circle (0.06);
\end{tikzpicture} 
\end{aligned}
, \ \ \ \ \ \ U_{R}:=\{e^{2\pi it}|t\in (-\frac{1}{2},\frac{1}{2})\}=
\begin{aligned}\begin{tikzpicture}
 \def\R{1} % radius
  \draw[thick] (-0.5*\R,0) arc (-175:175:0.5*\R);
\filldraw[fill=white, draw=black, fill opacity=0] (-0.5*\R, 0) circle (0.06);
\end{tikzpicture}\end{aligned}.$$

Define $U_{S^{1}}$ by
\begin{eqnarray*}
 Ob(U_{S^{1}}) & = & U_{L}\sqcup U_{R},\\
 Mor(U_{S^{1}}) & = & U_{L}\times_{S^{1}} U_{L}\ \sqcup\ U_{R}\times_{S^{1}} U_{R}\ \sqcup\ U_{L}\times_{S^{1}} U_{R}\ \sqcup\ U_{R}\times_{S^{1}} U_{L}.
\end{eqnarray*}
Note that 
 \begin{eqnarray*}
\begin{aligned} U_{L}\times_{S^{1}} U_{L} \end{aligned} =  \begin{aligned}\begin{tikzpicture}
 \def\R{1} % radius
 %\draw[thick] (0,0) circle(0.5*\R);
 \draw[thick] (0.5*\R,0) arc (7:355:0.5*\R);
\filldraw[fill=white, draw=black, fill opacity=0] (0.5*\R, 0) circle (0.06);
\end{tikzpicture}\end{aligned} & & , \ \  \
 U_{R}\times_{S^{1}} U_{R}  = \begin{aligned} \begin{tikzpicture}
 \def\R{1} % radius
   \draw[thick] (-0.5*\R,0) arc (-175:175:0.5*\R);
\filldraw[fill=white, draw=black, fill opacity=0] (-0.5*\R, 0) circle (0.06);
\end{tikzpicture}\end{aligned},\\
U_{L}\times_{S^{1}} U_{R}  =   \begin{aligned}\begin{tikzpicture}
 \def\R{1} % radius
 \draw[thick] (0.5*\R,0) arc (0:180:0.5*\R);
\end{tikzpicture} \end{aligned} \  \ \ \sqcup\ \ \ \  \begin{aligned}\begin{tikzpicture}
 \def\R{1} % radius
 \draw[thick] (-0.5*\R,0) arc (180:360:0.5*\R);
\end{tikzpicture}\end{aligned} & & , 
\ \ \ \ \ \ 
U_{R}\times_{S^{1}} U_{L}  = \begin{aligned} \begin{tikzpicture}
 \def\R{1} % radius
 \draw[thick] (0.5*\R,0) arc (0:180:0.5*\R);
\end{tikzpicture} \end{aligned} \ \ \ \sqcup \ \ \  \begin{aligned} \begin{tikzpicture}
 \def\R{1} % radius
 \draw[thick] (-0.5*\R,0) arc (180:360:0.5*\R);
\end{tikzpicture}\end{aligned} .
\end{eqnarray*}

We denote $(e^{2\pi i t})_{*}\in U_{*}$ and $g=(e^{2\pi i t})_{*}\to (e^{2\pi i t'})_{\bullet}\in U_{*}\times_{S^{1}} U_{\bullet} $,  for $*, \bullet=R, L$, where
%For $g=(e^{2\pi i t})_{*}\to (e^{2\pi  i t'})_{\bullet}\in Mor(U_{S^{1}})$
\begin{itemize}
\item  $t'=t-1$, if $g\in \begin{aligned} \begin{tikzpicture}
 \def\R{1} % radius
 \draw[thick] (-0.5*\R,0) arc (180:360:0.5*\R);
\end{tikzpicture}\end{aligned} \subset U_{L}\times_{S^{1}} U_{R} $;
\item $t'=t+1$, if $g\in \begin{aligned} \begin{tikzpicture}
 \def\R{1} % radius
 \draw[thick] (-0.5*\R,0) arc (180:360:0.5*\R);
\end{tikzpicture}\end{aligned} \subset U_{R}\times_{S^{1}} U_{L} $;
\item $t'=t$, otherwise.
\end{itemize}

Then define a groupoid morphism $\tilde{\gamma}: U_{S^{1}}\times G_{C}\ltimes V_{C}\to G_{C}\ltimes V_{C}$ by:
\begin{eqnarray*}
\tilde{\gamma}_{0}((e^{2\pi i t})_{L},\vec{w}) & & :=\gamma_{0}(t,\vec{w})= D(tv)\vec{w},  \ \ \ \ \ \ \ \ t\in(0,1),\ \ \ (e^{ 2\pi i t})_{L}\in U_{L};\\
\tilde{\gamma}_{0}((e^{2\pi i t})_{R},\vec{w}) & & :=D(tv)\vec{w} , \ \ \ \ \ \ \ \ \ \ \ \ \ \ \ \ \ \ \ \ \ \ \ \ \ \ t\in(-\frac{1}{2},\frac{1}{2}),\ (e^{ 2\pi i t})_{R}\in U_{R};\\
\tilde{\gamma}_{1}((e^{2\pi i t})_{L}\to (e^{ 2\pi i t})_{L}, \vec{w} \xrightarrow{h}  h\cdot\vec{w}) & & :=D(tv)\vec{w} \xrightarrow{h}  h\cdot D(tv)\vec{w}, \ \ \ t\in(0,1);\\
\tilde{\gamma}_{1}((e^{2\pi i t})_{R}\to (e^{ 2\pi i t})_{R}, \vec{w} \xrightarrow{h}  h\cdot\vec{w}) & & :=D(tv)\vec{w} \xrightarrow{h}  h\cdot D(tv)\vec{w},\ \ \ \ t\in(-\frac{1}{2},\frac{1}{2});\\
\tilde{\gamma}_{1}((e^{2\pi i t})_{L}\to (e^{ 2\pi i t})_{R}, \vec{w} \xrightarrow{h}  h\cdot\vec{w}) & & :=D(tv)\vec{w} \xrightarrow{h}  h\cdot D(tv)\vec{w}, \ \ \ t\in(0,\frac{1}{2});\\
\tilde{\gamma}_{1}((e^{2\pi i t})_{R}\to (e^{ 2\pi i t})_{L}, \vec{w} \xrightarrow{h}  h\cdot\vec{w}) & & :=D(tv)\vec{w} \xrightarrow{h}  h\cdot D(tv)\vec{w},\ \ \ \ t\in(0,\frac{1}{2});\\
\tilde{\gamma}_{1}((e^{2\pi i t})_{L}\to (e^{ 2\pi i (t-1)}))_{R}, \vec{w} \xrightarrow{h}  h\cdot\vec{w}) & & :=D(tv)\vec{w} \xrightarrow{h\cdot D(-v)}  h\cdot D((t-1)v)\vec{w}, \ \ \ t\in (\frac{1}{2},1);\\
\tilde{\gamma}_{1}((e^{2\pi i t})_{R}\to (e^{ 2\pi i (t+1)}))_{L}, \vec{w} \xrightarrow{h}  h\cdot\vec{w}) & & :=D(tv)\vec{w} \xrightarrow{h\cdot D(v)}  h\cdot D((t+1)v)\vec{w}, \ \ \ t\in(-\frac{1}{2},0);\\
\end{eqnarray*}

Consider the following groupoid chart of $S^{2}$: 
\begin{eqnarray*}
 Ob(U_{S^{2}}) & & :=\begin{aligned}
 \begin{tikzpicture}
 \def\R{1} % radius
\filldraw[gray, ultra nearly transparent] (0.4*\R,-0.3*\R) arc (-asin(0.6):180+asin(0.6):0.5*\R) arc (180:0:0.4 and 0.06)-- cycle;
\fill[gray, semitransparent] (0,-0.3*\R) ellipse (0.4 and 0.05); %manually change 0.4 and 0.05 to 0.4R and 0.05R
\shadedraw[shading=ball,ball color=red](0.4*\R,-0.3*\R) arc (-asin(0.6):-180+asin(0.6):0.5*\R) arc (180:360:0.4 and 0.06) -- cycle;
\end{tikzpicture} 
\end{aligned}
\sqcup
\begin{aligned}
\begin{tikzpicture}
 \def\R{1} % radius
 \shadedraw[shading=ball,ball color=red, white] (0,0) circle (0.5*\R);
 \filldraw[fill=white, draw=black, fill opacity=0] (0,0.5*\R) circle (0.06);
 \filldraw[fill=white, draw=black, fill opacity=0] (0,-0.5*\R) circle (0.06);
\draw[thick, double] (0,0.5*\R) arc (90:-90:0.5*\R);
\end{tikzpicture} 
\end{aligned}
\sqcup
\begin{aligned}
\begin{tikzpicture}
 \def\R{1} % radius
 \shadedraw[shading=ball,ball color=red, white] (0,0) circle (0.5*\R);
  \filldraw[fill=white, draw=black, fill opacity=0] (0,0.5*\R) circle (0.06);
 \filldraw[fill=white, draw=black, fill opacity=0] (0,-0.5*\R) circle (0.06);
 \draw[thick, double] (0,-0.5*\R) arc (-90:-270:0.5*\R);
\end{tikzpicture} 
\end{aligned}
\sqcup
\begin{aligned}
\begin{tikzpicture}
 \def\R{1} % radius
\shadedraw[shading=ball,ball color=red](0.4*\R,0.3*\R) arc (asin(0.6):180-asin(0.6):0.5*\R) arc (180:360:0.4 and 0.06) -- cycle;
 \fill[gray, ultra nearly transparent] (0.4*\R,0.3*\R) arc (asin(0.6):-180-asin(0.6):0.5*\R) arc (-180:0:0.4 and 0.06) -- cycle;
\fill[gray, semitransparent] (0,0.3*\R) ellipse (0.4 and 0.05); %manually change 0.4 and 0.05 to 0.4R and 0.05R
\end{tikzpicture}
\end{aligned}\\
Mor(U_{S^{2}}) & & :=
\begin{aligned}
\begin{tikzpicture}
 \def\R{1} % radius
\shadedraw[shading=ball,ball color=red](0.4*\R,-0.3*\R) arc (-asin(0.6):-180+asin(0.6):0.5*\R) arc (180:360:0.4 and 0.06) -- cycle;
\filldraw[gray, ultra nearly transparent] (0.4*\R,-0.3*\R) arc (-asin(0.6):180+asin(0.6):0.5*\R) arc (180:0:0.4 and 0.06)-- cycle;

\fill[gray, semitransparent] (0,-0.3*\R) ellipse (0.4 and 0.05); %manually change 0.4 and 0.05 to 0.4R and 0.05R
\filldraw[fill=white, draw=black, fill opacity=0] (0.4*\R,-0.3*\R) circle (0.06);
\filldraw[fill=white, draw=black, fill opacity=0] (0,-0.5*\R) circle (0.06);
\draw[thick, double] (0.4*\R,-0.3*\R) arc (-asin(0.6):-90:0.5*\R);
\end{tikzpicture} 
\end{aligned}
\sqcup
\begin{aligned}
\begin{tikzpicture}
\def\R{1} % radius
\shadedraw[shading=ball,ball color=red](0.4*\R,-0.3*\R) arc (-asin(0.6):-180+asin(0.6):0.5*\R) arc (180:360:0.4 and 0.06) -- cycle;
\filldraw[gray, ultra nearly transparent] (0.4*\R,-0.3*\R) arc (-asin(0.6):180+asin(0.6):0.5*\R) arc (180:0:0.4 and 0.06)-- cycle;

\fill[gray, semitransparent] (0,-0.3*\R) ellipse (0.4 and 0.05); %manually change 0.4 and 0.05 to 0.4R and 0.05R
\filldraw[fill=white, draw=black, fill opacity=0] (-0.4*\R,-0.3*\R) circle (0.06);
\filldraw[fill=white, draw=black, fill opacity=0] (0,-0.5*\R) circle (0.06);
\draw[thick, double] (0,-0.5*\R) arc (-90:-180+asin(0.6):0.5*\R);
\end{tikzpicture} 
\end{aligned}
\sqcup
\begin{aligned}   
\begin{tikzpicture}
\def\R{1} % radius
\shadedraw[shading=ball,ball color=red, white] (0,0) circle (0.5*\R);
\draw[thick, double] (0,0) circle (0.5*\R);
\end{tikzpicture} 
\end{aligned}
\sqcup
\begin{aligned}  
\begin{tikzpicture}
 \def\R{1} % radius
\shadedraw[shading=ball,ball color=red](0.4*\R,0.3*\R) arc (asin(0.6):180-asin(0.6):0.5*\R) arc (180:360:0.4 and 0.06) -- cycle;
 \fill[gray, ultra nearly transparent] (0.4*\R,0.3*\R) arc (asin(0.6):-180-asin(0.6):0.5*\R) arc (-180:0:0.4 and 0.06) -- cycle;
\fill[gray, semitransparent] (0,0.3*\R) ellipse (0.4 and 0.05); %manually change 0.4 and 0.05 to 0.4R and 0.05R
\filldraw[fill=white, draw=black, fill opacity=0] (0.4*\R,0.3*\R) circle (0.06);
\filldraw[fill=white, draw=black, fill opacity=0] (0,0.5*\R) circle (0.06);
\draw[thick, double] (0,0.5*\R) arc (90:asin(0.6):0.5*\R);
\end{tikzpicture}
\end{aligned}
\sqcup
\begin{aligned}
\begin{tikzpicture}
 \def\R{1} % radius
\shadedraw[shading=ball,ball color=red](0.4*\R,0.3*\R) arc (asin(0.6):180-asin(0.6):0.5*\R) arc (180:360:0.4 and 0.06) -- cycle;
 \fill[gray, ultra nearly transparent] (0.4*\R,0.3*\R) arc (asin(0.6):-180-asin(0.6):0.5*\R) arc (-180:0:0.4 and 0.06) -- cycle;
\fill[gray, semitransparent] (0,0.3*\R) ellipse (0.4 and 0.05); %manually change 0.4 and 0.05 to 0.4R and 0.05R
\filldraw[fill=white, draw=black, fill opacity=0] (-0.4*\R,0.3*\R) circle (0.06);
\filldraw[fill=white, draw=black, fill opacity=0] (0,0.5*\R) circle (0.06);
\draw[thick, double] (-0.4*\R,0.3*\R) arc (180-asin(0.6):90:0.5*\R);
\end{tikzpicture}
\end{aligned}
\sqcup\\
&& \hspace{1cm} 
\begin{aligned}
\begin{tikzpicture}
 \def\R{1} % radius
\shadedraw[shading=ball,ball color=red](0.4*\R,-0.3*\R) arc (-asin(0.6):-180+asin(0.6):0.5*\R) arc (180:360:0.4 and 0.06) -- cycle;
 \filldraw[gray, ultra nearly transparent] (0.4*\R,-0.3*\R) arc (-asin(0.6):180+asin(0.6):0.5*\R) arc (180:0:0.4 and 0.06)-- cycle;

\fill[gray, semitransparent] (0,-0.3*\R) ellipse (0.4 and 0.05); %manually change 0.4 and 0.05 to 0.4R and 0.05R
 \filldraw[fill=white, draw=black, fill opacity=0] (0.4*\R,-0.3*\R) circle (0.06);
 \filldraw[fill=white, draw=black, fill opacity=0] (0,-0.5*\R) circle (0.06);
 \draw[thick, double] (0.4*\R,-0.3*\R) arc (-asin(0.6):-90:0.5*\R);
\end{tikzpicture} 
\end{aligned}
\sqcup
\begin{aligned}
\begin{tikzpicture}
\def\R{1} % radius
\shadedraw[shading=ball,ball color=red](0.4*\R,-0.3*\R) arc (-asin(0.6):-180+asin(0.6):0.5*\R) arc (180:360:0.4 and 0.06) -- cycle;
\filldraw[gray, ultra nearly transparent] (0.4*\R,-0.3*\R) arc (-asin(0.6):180+asin(0.6):0.5*\R) arc (180:0:0.4 and 0.06)-- cycle;

\fill[gray, semitransparent] (0,-0.3*\R) ellipse (0.4 and 0.05); %manually change 0.4 and 0.05 to 0.4R and 0.05R
\filldraw[fill=white, draw=black, fill opacity=0] (-0.4*\R,-0.3*\R) circle (0.06);
\filldraw[fill=white, draw=black, fill opacity=0] (0,-0.5*\R) circle (0.06);
\draw[thick, double] (0,-0.5*\R) arc (-90:-180+asin(0.6):0.5*\R);
\end{tikzpicture} 
\end{aligned}
\sqcup
\begin{aligned}
\begin{tikzpicture}
\def\R{1} % radius
\shadedraw[shading=ball,ball color=red, white] (0,0) circle (0.5*\R);
\draw[thick, double] (0,0) circle (0.5*\R);
\end{tikzpicture} 
\end{aligned}
\sqcup
\begin{aligned}  
\begin{tikzpicture}
 \def\R{1} % radius
\shadedraw[shading=ball,ball color=red](0.4*\R,0.3*\R) arc (asin(0.6):180-asin(0.6):0.5*\R) arc (180:360:0.4 and 0.06) -- cycle;
 \fill[gray, ultra nearly transparent] (0.4*\R,0.3*\R) arc (asin(0.6):-180-asin(0.6):0.5*\R) arc (-180:0:0.4 and 0.06) -- cycle;
\fill[gray, semitransparent] (0,0.3*\R) ellipse (0.4 and 0.05); %manually change 0.4 and 0.05 to 0.4R and 0.05R
\filldraw[fill=white, draw=black, fill opacity=0] (0.4*\R,0.3*\R) circle (0.06);
\filldraw[fill=white, draw=black, fill opacity=0] (0,0.5*\R) circle (0.06);
\draw[thick, double] (0,0.5*\R) arc (90:asin(0.6):0.5*\R);
\end{tikzpicture}
\end{aligned}
\sqcup
\begin{aligned}
\begin{tikzpicture}
 \def\R{1} % radius
\shadedraw[shading=ball,ball color=red](0.4*\R,0.3*\R) arc (asin(0.6):180-asin(0.6):0.5*\R) arc (180:360:0.4 and 0.06) -- cycle;
 \fill[gray, ultra nearly transparent] (0.4*\R,0.3*\R) arc (asin(0.6):-180-asin(0.6):0.5*\R) arc (-180:0:0.4 and 0.06) -- cycle;
\fill[gray, semitransparent] (0,0.3*\R) ellipse (0.4 and 0.05); %manually change 0.4 and 0.05 to 0.4R and 0.05R
\filldraw[fill=white, draw=black, fill opacity=0] (-0.4*\R,0.3*\R) circle (0.06);
\filldraw[fill=white, draw=black, fill opacity=0] (0,0.5*\R) circle (0.06);
\draw[thick, double] (-0.4*\R,0.3*\R) arc (180-asin(0.6):90:0.5*\R);
\end{tikzpicture}
\end{aligned}\\
&&\hspace{1cm} \sqcup
\begin{aligned}
\begin{tikzpicture}
 \def\R{1} % radius
\shadedraw[shading=ball,ball color=red](0.4*\R,-0.3*\R) arc (-asin(0.6):-180+asin(0.6):0.5*\R) arc (180:360:0.4 and 0.06) -- cycle;
 \filldraw[gray, ultra nearly transparent] (0.4*\R,-0.3*\R) arc (-asin(0.6):180+asin(0.6):0.5*\R) arc (180:0:0.4 and 0.06)-- cycle;

\fill[gray, semitransparent] (0,-0.3*\R) ellipse (0.4 and 0.05); %manually change 0.4 and 0.05 to 0.4R and 0.05R
\end{tikzpicture} 
\end{aligned}
 \sqcup
 \begin{aligned}
\begin{tikzpicture}
 \def\R{1} % radius
 \shadedraw[shading=ball,ball color=red, white] (0,0) circle (0.5*\R);
 \filldraw[fill=white, draw=black, fill opacity=0] (0,0.5*\R) circle (0.06);
 \filldraw[fill=white, draw=black, fill opacity=0] (0,-0.5*\R) circle (0.06);
\draw[thick, double] (0,0.5*\R) arc (90:-90:0.5*\R);
\end{tikzpicture} 
\end{aligned}
 \sqcup
 \begin{aligned}   
\begin{tikzpicture}
 \def\R{1} % radius
 \shadedraw[shading=ball,ball color=red, white] (0,0) circle (0.5*\R);
  \filldraw[fill=white, draw=black, fill opacity=0] (0,0.5*\R) circle (0.06);
 \filldraw[fill=white, draw=black, fill opacity=0] (0,-0.5*\R) circle (0.06);
 \draw[thick, double] (0,-0.5*\R) arc (-90:-270:0.5*\R);
\end{tikzpicture} 
\end{aligned}
 \sqcup
 \begin{aligned}
\begin{tikzpicture}
 \def\R{1} % radius
\shadedraw[shading=ball,ball color=red](0.4*\R,0.3*\R) arc (asin(0.6):180-asin(0.6):0.5*\R) arc (180:360:0.4 and 0.06) -- cycle;
 \fill[gray, ultra nearly transparent] (0.4*\R,0.3*\R) arc (asin(0.6):-180-asin(0.6):0.5*\R) arc (-180:0:0.4 and 0.06) -- cycle;
\fill[gray, semitransparent] (0,0.3*\R) ellipse (0.4 and 0.05); %manually change 0.4 and 0.05 to 0.4R and 0.05R
\end{tikzpicture}
\end{aligned}
\end{eqnarray*}

Define a Lie groupoid $\gG_{\E_{v,C}}$ by

\begin{eqnarray*}
Ob(\gG_{\E_{v,C}}):=
\begin{aligned}
\begin{tikzpicture}
 \def\R{1} % radius
\shadedraw[shading=ball,ball color=red](0.4*\R,-0.3*\R) arc (-asin(0.6):-180+asin(0.6):0.5*\R) arc (180:360:0.4 and 0.06) -- cycle;
 \filldraw[gray, ultra nearly transparent] (0.4*\R,-0.3*\R) arc (-asin(0.6):180+asin(0.6):0.5*\R) arc (180:0:0.4 and 0.06)-- cycle;
\fill[gray, semitransparent] (0,-0.3*\R) ellipse (0.4 and 0.05); %manually change 0.4 and 0.05 to 0.4R and 0.05R
\end{tikzpicture} 
\end{aligned}
\times V_{C}\negthickspace && \sqcup (
\begin{aligned}
\begin{tikzpicture}
 \def\R{1} % radius
\shadedraw[shading=ball,ball color=red](0.5*\R,0) arc (0:-180:0.5*\R) arc (180:360:0.5 and 0.1) -- cycle;
 \fill[gray, ultra nearly transparent] (0.5*\R,0) arc (0:180:0.5*\R) arc (180:0:0.5 and 0.1) -- cycle;
\filldraw[fill=gray, draw=black, semitransparent] (0,0) ellipse (0.5 and 0.1); %manually change 0.4 and 0.05 to 0.4R and 0.05R
 \filldraw[fill=white, draw=black, fill opacity=0] (0.5*\R,0) circle (0.06);
 \filldraw[fill=white, draw=black, fill opacity=0] (0,-0.5*\R) circle (0.06);
\draw[thick, double] (0.5*\R,0) arc (0:-90:0.5*\R);
\end{tikzpicture} 
\end{aligned}
\times V_{C}\ \sqcup
\begin{aligned}
\begin{tikzpicture}
 \def\R{1} % radius
\shadedraw[shading=ball,ball color=red](0.5*\R,0) arc (0:180:0.5*\R) arc (180:360:0.5 and 0.1) -- cycle;
 \fill[gray, ultra   nearly transparent] (0.5*\R,0) arc (0:-180:0.5*\R) arc (-180:0:0.5 and 0.1)-- cycle;
\filldraw[fill=gray, draw=gray, semitransparent] (0,0) ellipse (0.5 and 0.1); %manually change 0.4 and 0.05 to 0.4R and 0.05R
\draw[densely dashed] (0.5*\R,0) arc (0:180:0.5 and 0.1);
\draw (-0.5*\R,0) arc (180:360:0.5 and 0.1);
  \filldraw[fill=white, draw=black, fill opacity=0] (0,0.5*\R) circle (0.06);
 \filldraw[fill=white, draw=black, fill opacity=0] (0.5*\R,0) circle (0.06);
 \draw[thick, double] (0.5*\R,0) arc (0:90:0.5*\R);
\end{tikzpicture} 
\end{aligned}
\times V_{C})/rel_{Ob,1}\ \sqcup \\
\begin{aligned}
\begin{tikzpicture}
 \def\R{1} % radius
\shadedraw[shading=ball,ball color=red](0.4*\R,0.3*\R) arc (asin(0.6):180-asin(0.6):0.5*\R) arc (180:360:0.4 and 0.06) -- cycle;
 \fill[gray, ultra nearly transparent] (0.4*\R,0.3*\R) arc (asin(0.6):-180-asin(0.6):0.5*\R) arc (-180:0:0.4 and 0.06) -- cycle;
\fill[gray, semitransparent] (0,0.3*\R) ellipse (0.4 and 0.05); %manually change 0.4 and 0.05 to 0.4R and 0.05R
\end{tikzpicture}
\end{aligned}
\times V_{C} \negthickspace\negthickspace\negthickspace & & \sqcup  
(   
\begin{aligned}
\begin{tikzpicture}
\def\R{1} % radius
\shadedraw[shading=ball,ball color=red](0.5*\R,0) arc (0:-180:0.5*\R) arc (180:360:0.5 and 0.1) -- cycle;
 \fill[gray, ultra nearly transparent] (0.5*\R,0) arc (0:180:0.5*\R) arc (180:0:0.5 and 0.1) -- cycle;
\filldraw[fill=gray, draw=black, semitransparent] (0,0) ellipse (0.5 and 0.1); %manually change 0.4 and 0.05 to 0.4R and 0.05R
\filldraw[fill=white, draw=black, fill opacity=0] (-0.5*\R,0) circle (0.06);
\filldraw[fill=white, draw=black, fill opacity=0] (0,-0.5*\R) circle (0.06);
\draw[thick, double] (0,-0.5*\R) arc (-90:-180:0.5*\R);
\end{tikzpicture} 
\end{aligned}
\times V_{C}\ \sqcup 
\begin{aligned}
\begin{tikzpicture}
 \def\R{1} % radius
\shadedraw[shading=ball,ball color=red](0.5*\R,0) arc (0:180:0.5*\R) arc (180:360:0.5 and 0.1) -- cycle;
 \fill[gray, ultra   nearly transparent] (0.5*\R,0) arc (0:-180:0.5*\R) arc (-180:0:0.5 and 0.1)-- cycle;
\filldraw[fill=gray, draw=gray, semitransparent] (0,0) ellipse (0.5 and 0.1); %manually change 0.4 and 0.05 to 0.4R and 0.05R
\draw[densely dashed] (0.5*\R,0) arc (0:180:0.5 and 0.1);
\draw (-0.5*\R,0) arc (180:360:0.5 and 0.1);
\filldraw[fill=white, draw=black, fill opacity=0] (0,0.5*\R) circle (0.06);
\filldraw[fill=white, draw=black, fill opacity=0] (-0.5*\R,0) circle (0.06);
\draw[thick, double] (-0.5*\R,0) arc (-180:-270:0.5*\R);
\end{tikzpicture} 
\end{aligned}
\times V_{C} )/ rel_{Ob,2}
\end{eqnarray*}
\begin{eqnarray*}
Mor(\gG_{\E_{v,C}}):=
\begin{aligned}
\begin{tikzpicture}
 \def\R{1} % radius
\shadedraw[shading=ball,ball color=red](0.4*\R,-0.3*\R) arc (-asin(0.6):-180+asin(0.6):0.5*\R) arc (180:360:0.4 and 0.06) -- cycle;
 \filldraw[gray, ultra nearly transparent] (0.4*\R,-0.3*\R) arc (-asin(0.6):180+asin(0.6):0.5*\R) arc (180:0:0.4 and 0.06)-- cycle;

\fill[gray, semitransparent] (0,-0.3*\R) ellipse (0.4 and 0.05); %manually change 0.4 and 0.05 to 0.4R and 0.05R
 \filldraw[fill=white, draw=black, fill opacity=0] (0.4*\R,-0.3*\R) circle (0.06);
 \filldraw[fill=white, draw=black, fill opacity=0] (0,-0.5*\R) circle (0.06);
 \draw[thick, double] (0.4*\R,-0.3*\R) arc (-asin(0.6):-90:0.5*\R);
\end{tikzpicture} 
\end{aligned}
\times G_{C}\times V_{C}\ \sqcup
\begin{aligned}
\begin{tikzpicture}
\def\R{1} % radius
\shadedraw[shading=ball,ball color=red](0.4*\R,-0.3*\R) arc (-asin(0.6):-180+asin(0.6):0.5*\R) arc (180:360:0.4 and 0.06) -- cycle;
\filldraw[gray, ultra nearly transparent] (0.4*\R,-0.3*\R) arc (-asin(0.6):180+asin(0.6):0.5*\R) arc (180:0:0.4 and 0.06)-- cycle;

\fill[gray, semitransparent] (0,-0.3*\R) ellipse (0.4 and 0.05); %manually change 0.4 and 0.05 to 0.4R and 0.05R
\filldraw[fill=white, draw=black, fill opacity=0] (-0.4*\R,-0.3*\R) circle (0.06);
\filldraw[fill=white, draw=black, fill opacity=0] (0,-0.5*\R) circle (0.06);
\draw[thick, double] (0,-0.5*\R) arc (-90:-180+asin(0.6):0.5*\R);
\end{tikzpicture} 
\end{aligned}
\times G_{C}\times V_{C} \negthickspace\negthickspace\negthickspace & & \sqcup \\
(
\begin{aligned}
\begin{tikzpicture}
 \def\R{1} % radius
\shadedraw[shading=ball,ball color=red](0.5*\R,0) arc (0:-180:0.5*\R) arc (180:360:0.5 and 0.1) -- cycle;
  \fill[gray, ultra nearly transparent] (0.5*\R,0) arc (0:180:0.5*\R) arc (180:0:0.5 and 0.1) -- cycle;
\filldraw[fill=gray, draw=black,  semitransparent] (0,0) ellipse (0.5 and 0.1); %manually change 0.4 and 0.05 to 0.4R and 0.05R
 \filldraw[fill=white, draw=black, fill opacity=0] (0.5*\R,0) circle (0.06);
 \filldraw[fill=white, draw=black, fill opacity=0] (-0.5*\R,0) circle (0.06);
\draw[thick, double] (0.5*\R,0) arc (0:-180:0.5*\R);
\end{tikzpicture} 
\end{aligned}
\times G_{C}\times V_{C} \negthickspace\negthickspace\negthickspace & & \sqcup
\begin{aligned}
\begin{tikzpicture}
 \def\R{1} % radius
\shadedraw[shading=ball,ball color=red](0.5*\R,0) arc (0:180:0.5*\R) arc (180:360:0.5 and 0.1) -- cycle;
 \fill[gray, ultra   nearly transparent] (0.5*\R,0) arc (0:-180:0.5*\R) arc (-180:0:0.5 and 0.1)-- cycle;
\filldraw[fill=gray, draw=gray, semitransparent] (0,0) ellipse (0.5 and 0.1); %manually change 0.4 and 0.05 to 0.4R and 0.05R
\draw[densely dashed] (0.5*\R,0) arc (0:180:0.5 and 0.1);
\draw (-0.5*\R,0) arc (180:360:0.5 and 0.1);  
\filldraw[fill=white, draw=black, fill opacity=0] (-0.5*\R,0) circle (0.06);
 \filldraw[fill=white, draw=black, fill opacity=0] (0.5*\R,0) circle (0.06);
 \draw[thick, double] (0.5*\R,0) arc (0:180:0.5*\R);
\end{tikzpicture} 
\end{aligned}
\times G_{C}\times V_{C})/rel_{Mor,1}\\
\negthickspace\negthickspace\negthickspace & & \sqcup
\begin{aligned}
\begin{tikzpicture}
 \def\R{1} % radius
\shadedraw[shading=ball,ball color=red](0.4*\R,0.3*\R) arc (asin(0.6):180-asin(0.6):0.5*\R) arc (180:360:0.4 and 0.06) -- cycle;
 \fill[gray, ultra nearly transparent] (0.4*\R,0.3*\R) arc (asin(0.6):-180-asin(0.6):0.5*\R) arc (-180:0:0.4 and 0.06) -- cycle;
\fill[gray, semitransparent] (0,0.3*\R) ellipse (0.4 and 0.05); %manually change 0.4 and 0.05 to 0.4R and 0.05R
\filldraw[fill=white, draw=black, fill opacity=0] (0.4*\R,0.3*\R) circle (0.06);
\filldraw[fill=white, draw=black, fill opacity=0] (0,0.5*\R) circle (0.06);
\draw[thick, double] (0,0.5*\R) arc (90:asin(0.6):0.5*\R);
\end{tikzpicture}
\end{aligned}
\times G_{C}\times V_{C}\ \sqcup
\begin{aligned}
\begin{tikzpicture}
 \def\R{1} % radius
\shadedraw[shading=ball,ball color=red](0.4*\R,0.3*\R) arc (asin(0.6):180-asin(0.6):0.5*\R) arc (180:360:0.4 and 0.06) -- cycle;
 \fill[gray, ultra nearly transparent] (0.4*\R,0.3*\R) arc (asin(0.6):-180-asin(0.6):0.5*\R) arc (-180:0:0.4 and 0.06) -- cycle;
\fill[gray, semitransparent] (0,0.3*\R) ellipse (0.4 and 0.05); %manually change 0.4 and 0.05 to 0.4R and 0.05R
\filldraw[fill=white, draw=black, fill opacity=0] (-0.4*\R,0.3*\R) circle (0.06);
\filldraw[fill=white, draw=black, fill opacity=0] (0,0.5*\R) circle (0.06);
\draw[thick, double] (-0.4*\R,0.3*\R) arc (180-asin(0.6):90:0.5*\R);
\end{tikzpicture}
\end{aligned}
\times G_{C}\times V_{C}\\
\sqcup
\begin{aligned}
\begin{tikzpicture}
 \def\R{1} % radius
\shadedraw[shading=ball,ball color=red](0.4*\R,-0.3*\R) arc (-asin(0.6):-180+asin(0.6):0.5*\R) arc (180:360:0.4 and 0.06) -- cycle;
 \filldraw[gray, ultra nearly transparent] (0.4*\R,-0.3*\R) arc (-asin(0.6):180+asin(0.6):0.5*\R) arc (180:0:0.4 and 0.06)-- cycle;

\fill[gray, semitransparent] (0,-0.3*\R) ellipse (0.4 and 0.05); %manually change 0.4 and 0.05 to 0.4R and 0.05R
 \filldraw[fill=white, draw=black, fill opacity=0] (0.4*\R,-0.3*\R) circle (0.06);
 \filldraw[fill=white, draw=black, fill opacity=0] (0,-0.5*\R) circle (0.06);
 \draw[thick, double] (0.4*\R,-0.3*\R) arc (-asin(0.6):-90:0.5*\R);
\end{tikzpicture} 
\end{aligned}
\times G_{C}\times V_{C}\ \sqcup
\begin{aligned}
\begin{tikzpicture}
\def\R{1} % radius
\shadedraw[shading=ball,ball color=red](0.4*\R,-0.3*\R) arc (-asin(0.6):-180+asin(0.6):0.5*\R) arc (180:360:0.4 and 0.06) -- cycle;
\filldraw[gray, ultra nearly transparent] (0.4*\R,-0.3*\R) arc (-asin(0.6):180+asin(0.6):0.5*\R) arc (180:0:0.4 and 0.06)-- cycle;

\fill[gray, semitransparent] (0,-0.3*\R) ellipse (0.4 and 0.05); %manually change 0.4 and 0.05 to 0.4R and 0.05R
\filldraw[fill=white, draw=black, fill opacity=0] (-0.4*\R,-0.3*\R) circle (0.06);
\filldraw[fill=white, draw=black, fill opacity=0] (0,-0.5*\R) circle (0.06);
\draw[thick, double] (0,-0.5*\R) arc (-90:-180+asin(0.6):0.5*\R);
\end{tikzpicture} 
\end{aligned}
\times G_{C}\times V_{C}\negthickspace\negthickspace\negthickspace & &\sqcup \\
(
\begin{aligned}
\begin{tikzpicture}
 \def\R{1} % radius
\shadedraw[shading=ball,ball color=red](0.5*\R,0) arc (0:-180:0.5*\R) arc (180:360:0.5 and 0.1) -- cycle;
  \fill[gray, ultra nearly transparent] (0.5*\R,0) arc (0:180:0.5*\R) arc (180:0:0.5 and 0.1) -- cycle;
\filldraw[fill=gray, draw=black,  semitransparent] (0,0) ellipse (0.5 and 0.1); %manually change 0.4 and 0.05 to 0.4R and 0.05R
 \filldraw[fill=white, draw=black, fill opacity=0] (0.5*\R,0) circle (0.06);
 \filldraw[fill=white, draw=black, fill opacity=0] (-0.5*\R,0) circle (0.06);
\draw[thick, double] (0.5*\R,0) arc (0:-180:0.5*\R);
\end{tikzpicture} 
\end{aligned}
\times G_{C}\times V_{C} \negthickspace\negthickspace\negthickspace & & \sqcup
\begin{aligned}
\begin{tikzpicture}
 \def\R{1} % radius
\shadedraw[shading=ball,ball color=red](0.5*\R,0) arc (0:180:0.5*\R) arc (180:360:0.5 and 0.1) -- cycle;
 \fill[gray, ultra   nearly transparent] (0.5*\R,0) arc (0:-180:0.5*\R) arc (-180:0:0.5 and 0.1)-- cycle;
\filldraw[fill=gray, draw=gray, semitransparent] (0,0) ellipse (0.5 and 0.1); %manually change 0.4 and 0.05 to 0.4R and 0.05R
\draw[densely dashed] (0.5*\R,0) arc (0:180:0.5 and 0.1);
\draw (-0.5*\R,0) arc (180:360:0.5 and 0.1);  
\filldraw[fill=white, draw=black, fill opacity=0] (-0.5*\R,0) circle (0.06);
 \filldraw[fill=white, draw=black, fill opacity=0] (0.5*\R,0) circle (0.06);
 \draw[thick, double] (0.5*\R,0) arc (0:180:0.5*\R);
\end{tikzpicture} 
\end{aligned}
\times G_{C}\times V_{C})/rel_{Mor,2}\\
\negthickspace\negthickspace\negthickspace & & \sqcup
\begin{aligned}
\begin{tikzpicture}
 \def\R{1} % radius
\shadedraw[shading=ball,ball color=red](0.4*\R,0.3*\R) arc (asin(0.6):180-asin(0.6):0.5*\R) arc (180:360:0.4 and 0.06) -- cycle;
 \fill[gray, ultra nearly transparent] (0.4*\R,0.3*\R) arc (asin(0.6):-180-asin(0.6):0.5*\R) arc (-180:0:0.4 and 0.06) -- cycle;
\fill[gray, semitransparent] (0,0.3*\R) ellipse (0.4 and 0.05); %manually change 0.4 and 0.05 to 0.4R and 0.05R
\filldraw[fill=white, draw=black, fill opacity=0] (0.4*\R,0.3*\R) circle (0.06);
\filldraw[fill=white, draw=black, fill opacity=0] (0,0.5*\R) circle (0.06);
\draw[thick, double] (0,0.5*\R) arc (90:asin(0.6):0.5*\R);
\end{tikzpicture}
\end{aligned}
\times G_{C}\times V_{C}\  \sqcup
\begin{aligned}
\begin{tikzpicture}
 \def\R{1} % radius
\shadedraw[shading=ball,ball color=red](0.4*\R,0.3*\R) arc (asin(0.6):180-asin(0.6):0.5*\R) arc (180:360:0.4 and 0.06) -- cycle;
 \fill[gray, ultra nearly transparent] (0.4*\R,0.3*\R) arc (asin(0.6):-180-asin(0.6):0.5*\R) arc (-180:0:0.4 and 0.06) -- cycle;
\fill[gray, semitransparent] (0,0.3*\R) ellipse (0.4 and 0.05); %manually change 0.4 and 0.05 to 0.4R and 0.05R
\filldraw[fill=white, draw=black, fill opacity=0] (-0.4*\R,0.3*\R) circle (0.06);
\filldraw[fill=white, draw=black, fill opacity=0] (0,0.5*\R) circle (0.06);
\draw[thick, double] (-0.4*\R,0.3*\R) arc (180-asin(0.6):90:0.5*\R);
\end{tikzpicture}
\end{aligned}
\times G_{C}\times V_{C}\\
\sqcup
\begin{aligned}
\begin{tikzpicture}
 \def\R{1} % radius
\shadedraw[shading=ball,ball color=red](0.4*\R,-0.3*\R) arc (-asin(0.6):-180+asin(0.6):0.5*\R) arc (180:360:0.4 and 0.06) -- cycle;
 \filldraw[gray, ultra nearly transparent] (0.4*\R,-0.3*\R) arc (-asin(0.6):180+asin(0.6):0.5*\R) arc (180:0:0.4 and 0.06)-- cycle;

\fill[gray, semitransparent] (0,-0.3*\R) ellipse (0.4 and 0.05); %manually change 0.4 and 0.05 to 0.4R and 0.05R
\end{tikzpicture} 
\end{aligned}
\times G_{C}\times V_{C}\  \sqcup (
\begin{aligned}
\begin{tikzpicture}
 \def\R{1} % radius
\shadedraw[shading=ball,ball color=red](0.5*\R,0) arc (0:-180:0.5*\R) arc (180:360:0.5 and 0.1) -- cycle;
  \fill[gray, ultra nearly transparent] (0.5*\R,0) arc (0:180:0.5*\R) arc (180:0:0.5 and 0.1) -- cycle;
\filldraw[fill=gray, draw=black, semitransparent] (0,0) ellipse (0.5 and 0.1); %manually change 0.4 and 0.05 to 0.4R and 0.05R
 \filldraw[fill=white, draw=black, fill opacity=0] (0.5*\R,0) circle (0.06);
 \filldraw[fill=white, draw=black, fill opacity=0] (0,-0.5*\R) circle (0.06);
\draw[thick, double] (0.5*\R,0) arc (0:-90:0.5*\R);
\end{tikzpicture} 
\end{aligned}
\times G_{C}\times V_{C}  \negthickspace\negthickspace\negthickspace & & \sqcup
\begin{aligned}
\begin{tikzpicture}
 \def\R{1} % radius
\shadedraw[shading=ball,ball color=red](0.5*\R,0) arc (0:180:0.5*\R) arc (180:360:0.5 and 0.1) -- cycle;
 \fill[gray, ultra   nearly transparent] (0.5*\R,0) arc (0:-180:0.5*\R) arc (-180:0:0.5 and 0.1)-- cycle;
\filldraw[fill=gray, draw=gray, semitransparent] (0,0) ellipse (0.5 and 0.1); %manually change 0.4 and 0.05 to 0.4R and 0.05R
\draw[densely dashed] (0.5*\R,0) arc (0:180:0.5 and 0.1);
\draw (-0.5*\R,0) arc (180:360:0.5 and 0.1);  
\filldraw[fill=white, draw=black, fill opacity=0] (0,0.5*\R) circle (0.06);
 \filldraw[fill=white, draw=black, fill opacity=0] (0.5*\R,0) circle (0.06);
 \draw[thick, double] (0.5*\R,0) arc (0:90:0.5*\R);
\end{tikzpicture} 
\end{aligned}
\times G_{C}\times V_{C})/rel_{Mor,3}\\
\ \sqcup 
\begin{aligned}
\begin{tikzpicture}
 \def\R{1} % radius
\shadedraw[shading=ball,ball color=red](0.4*\R,0.3*\R) arc (asin(0.6):180-asin(0.6):0.5*\R) arc (180:360:0.4 and 0.06) -- cycle;
 \fill[gray, ultra nearly transparent] (0.4*\R,0.3*\R) arc (asin(0.6):-180-asin(0.6):0.5*\R) arc (-180:0:0.4 and 0.06) -- cycle;
\fill[gray, semitransparent] (0,0.3*\R) ellipse (0.4 and 0.05); %manually change 0.4 and 0.05 to 0.4R and 0.05R
\end{tikzpicture}
\end{aligned}
 \times G_{C}\times V_{C}\sqcup 
(   
\begin{aligned}
\begin{tikzpicture}
\def\R{1} % radius
\shadedraw[shading=ball,ball color=red](0.5*\R,0) arc (0:-180:0.5*\R) arc (180:360:0.5 and 0.1) -- cycle;
 \fill[gray, ultra nearly transparent] (0.5*\R,0) arc (0:180:0.5*\R) arc (180:0:0.5 and 0.1) -- cycle;
\filldraw[fill=gray, draw=black,  semitransparent] (0,0) ellipse (0.5 and 0.1); %manually change 0.4 and 0.05 to 0.4R and 0.05R
\filldraw[fill=white, draw=black, fill opacity=0] (-0.5*\R,0) circle (0.06);
\filldraw[fill=white, draw=black, fill opacity=0] (0,-0.5*\R) circle (0.06);
\draw[thick, double] (0,-0.5*\R) arc (-90:-180:0.5*\R);
\end{tikzpicture} 
\end{aligned}
\times  G_{C}\times V_{C} \negthickspace\negthickspace\negthickspace & & \sqcup 
\begin{aligned}
\begin{tikzpicture}
 \def\R{1} % radius
\shadedraw[shading=ball,ball color=red](0.5*\R,0) arc (0:180:0.5*\R) arc (180:360:0.5 and 0.1) -- cycle;
 \fill[gray, ultra   nearly transparent] (0.5*\R,0) arc (0:-180:0.5*\R) arc (-180:0:0.5 and 0.1)-- cycle;
\filldraw[fill=gray, draw=gray, semitransparent] (0,0) ellipse (0.5 and 0.1); %manually change 0.4 and 0.05 to 0.4R and 0.05R
\draw[densely dashed] (0.5*\R,0) arc (0:180:0.5 and 0.1);
\draw (-0.5*\R,0) arc (180:360:0.5 and 0.1);
\filldraw[fill=white, draw=black, fill opacity=0] (0,0.5*\R) circle (0.06);
\filldraw[fill=white, draw=black, fill opacity=0] (-0.5*\R,0) circle (0.06);
\draw[thick, double] (-0.5*\R,0) arc (-180:-270:0.5*\R);
\end{tikzpicture} 
\end{aligned}
\times G_{C}\times V_{C})/ rel_{Mor,4}.
\end{eqnarray*}

For notational convenience, we identify hemispheres with unit disks.

The glueing along the boundary is given by $\tilde{\gamma}_{v}:U_{S^{1}}\times G_{C}\ltimes V_{C}\to G_{C}\ltimes V_{C}$. More explicitly, 
\begin{itemize}
\item $rel_{Ob,1}$: for $(e^{i2\pi t},\vec{w})$ in the boundary of 
$
\begin{aligned}
\begin{tikzpicture}
 \def\R{1} % radius
\shadedraw[shading=ball,ball color=red](0.5*\R,0) arc (0:-180:0.5*\R) arc (180:360:0.5 and 0.1) -- cycle;
  \fill[gray, ultra nearly transparent] (0.5*\R,0) arc (0:180:0.5*\R) arc (180:0:0.5 and 0.1) -- cycle;
\filldraw[fill=gray, draw=black,  semitransparent] (0,0) ellipse (0.5 and 0.1); %manually change 0.4 and 0.05 to 0.4R and 0.05R
 \filldraw[fill=white, draw=black, fill opacity=0] (0.5*\R,0) circle (0.06);
 \filldraw[fill=white, draw=black, fill opacity=0] (0,-0.5*\R) circle (0.06);
\draw[thick, double] (0.5*\R,0) arc (0:-90:0.5*\R);
\end{tikzpicture} 
\end{aligned}
\times V_{C}$
, $(e^{i2\pi t'},\vec{w}')$ in the boundary of 
$
\begin{aligned}
\begin{tikzpicture}
 \def\R{1} % radius
\shadedraw[shading=ball,ball color=red](0.5*\R,0) arc (0:180:0.5*\R) arc (180:360:0.5 and 0.1) -- cycle;
 \fill[gray, ultra   nearly transparent] (0.5*\R,0) arc (0:-180:0.5*\R) arc (-180:0:0.5 and 0.1)-- cycle;
\filldraw[fill=gray, draw=gray, semitransparent] (0,0) ellipse (0.5 and 0.1); %manually change 0.4 and 0.05 to 0.4R and 0.05R
\draw[densely dashed] (0.5*\R,0) arc (0:180:0.5 and 0.1);
\draw (-0.5*\R,0) arc (180:360:0.5 and 0.1);  
\filldraw[fill=white, draw=black, fill opacity=0] (0,0.5*\R) circle (0.06);
 \filldraw[fill=white, draw=black, fill opacity=0] (0.5*\R,0) circle (0.06);
 \draw[thick, double] (0.5*\R,0) arc (0:90:0.5*\R);
\end{tikzpicture} 
\end{aligned}
\times V_{C}$, $(e^{i2\pi t},\vec{w})\sim (e^{i2\pi t'},\vec{w}')$ if and only if $t'=-t$, $\vec{w}'=D(-tv)\vec{w}$. Note that here $t\in(0,1)$ and $t'\in (-1,0)$.

\item $rel_{Ob,2}$: for $(e^{i2\pi t},\vec{w})$ in the boundary of 
$
\begin{aligned}
\begin{tikzpicture}
\def\R{1} % radius
\shadedraw[shading=ball,ball color=red](0.5*\R,0) arc (0:-180:0.5*\R) arc (180:360:0.5 and 0.1) -- cycle;
 \fill[gray, ultra nearly transparent] (0.5*\R,0) arc (0:180:0.5*\R) arc (180:0:0.5 and 0.1) -- cycle;
\filldraw[fill=gray, draw=black,  semitransparent] (0,0) ellipse (0.5 and 0.1); %manually change 0.4 and 0.05 to 0.4R and 0.05R
\filldraw[fill=white, draw=black, fill opacity=0] (-0.5*\R,0) circle (0.06);
\filldraw[fill=white, draw=black, fill opacity=0] (0,-0.5*\R) circle (0.06);
\draw[thick, double] (0,-0.5*\R) arc (-90:-180:0.5*\R);
\end{tikzpicture} 
\end{aligned}
\times V_{C}$, $(e^{i2\pi t'},\vec{w}')$ in the boundary of 
$
\begin{aligned}
\begin{tikzpicture}
 \def\R{1} % radius
\shadedraw[shading=ball,ball color=red](0.5*\R,0) arc (0:180:0.5*\R) arc (180:360:0.5 and 0.1) -- cycle;
 \fill[gray, ultra   nearly transparent] (0.5*\R,0) arc (0:-180:0.5*\R) arc (-180:0:0.5 and 0.1)-- cycle;
\filldraw[fill=gray, draw=gray, semitransparent] (0,0) ellipse (0.5 and 0.1); %manually change 0.4 and 0.05 to 0.4R and 0.05R
\draw[densely dashed] (0.5*\R,0) arc (0:180:0.5 and 0.1);
\draw (-0.5*\R,0) arc (180:360:0.5 and 0.1);
\filldraw[fill=white, draw=black, fill opacity=0] (0,0.5*\R) circle (0.06);
\filldraw[fill=white, draw=black, fill opacity=0] (-0.5*\R,0) circle (0.06);
\draw[thick, double] (-0.5*\R,0) arc (-180:-270:0.5*\R);
\end{tikzpicture}
\end{aligned} 
\times V_{C}$,
 $(e^{i2\pi t},\vec{w})\sim (e^{i2\pi t'},\vec{w}')$ if and only if $t'=-t$, $\vec{w}'=D(-tv)\vec{w}$. Note that here $t,t'\in (-\frac{1}{2},\frac{1}{2})$.

 \item $rel_{Mor,i}, \ i=1,2,3,4$: for boundary elements $(e^{i2\pi t_{1}}\to e^{i2\pi t_{2}}, \vec{w}\xrightarrow{g} g\cdot \vec{w})$ and $(e^{i2\pi t_{1}'}\to e^{i2\pi t_{2}'}, \vec{w}'\xrightarrow{g'} g'\cdot \vec{w}')$, 
 $$(e^{i2\pi t_{1}}\to e^{i2\pi t_{2}}, \vec{w}\xrightarrow{g} g\cdot \vec{w}) \sim(e^{i2\pi t_{1}'}\to e^{i2\pi t_{2}'}, \vec{w}'\xrightarrow{g'} g'\cdot \vec{w}')$$ 
 if and only if 
\begin{eqnarray*}
t_{1}=-t_{1}' & & ,\ \ t_{2}=-t_{2}',\\
\vec{w}'=D(-t_{1}v)\vec{w} & & ,\  g'\cdot \vec{w}'=D(-t_{2}v) g\cdot \vec{w}.
\end{eqnarray*}
\end{itemize}

We denote $m_{v}:=ord(D(-v))=ord(D(v))$. Then $D(-v)$ generates a cyclic subgroup of $G_{C}$ which is isomorphic to $\inte_{m_{v}}$. We denote $\mathfrak{i}:\inte_{m_{v}}\to G_{C}$ the inclusion of the subgroup.

Note that when $v=y_{i}$, $i=1,...,N$, $m_{v}$ is the number labeling the corresponding facet of the polytope $\Delta$.
\begin{prop}\label{constantsection}
 If $\pmb{s}_{x}: (S^{2}_{orb},p)\to \E_{v}$ is a sectional orbifold morphism from an orbisphere with one orbipoint (possibly a trivial one) lifting $s_{x}$, then the orbifold structure group at $p$ is $\inte_{m_{v}}$.
 \end{prop}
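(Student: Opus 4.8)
The plan is to work entirely inside the explicit groupoid chart $\gG_{\E_{v,C}}$ constructed above, and to read off the structure group at $p$ as the holonomy of the constant lift of $s_x$ around the equatorial gluing. First I would identify the fiber point through which $s_x$ passes. Since $H_v$ attains its maximum at $x$ and, by Lemma \ref{equivDarboux}, $\Phi(p_1,\dots,p_n;q_1,\dots,q_n)=\Phi(x_C)-\sum_j u^C_j(p_j^2+q_j^2)$ on the chart $G_C\ltimes V_C$, the point $x$ corresponds to $\vec{w}_x\in V_C$ whose coordinates vanish in every direction $j$ with $\langle u_j,v\rangle\neq 0$. Consequently $D(tv)\vec{w}_x=\vec{w}_x$ for all $t$, so $\vec{w}_x$ is genuinely fixed by the Hamiltonian loop. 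The constant assignment $z\mapsto(z,\vec{w}_x)$ then defines a lift of $s_x$ on the object level over each hemisphere, and this lift is compatible with the object gluing relations $rel_{Ob,1}$ and $rel_{Ob,2}$, since these send $\vec{w}\mapsto D(-tv)\vec{w}$ and therefore fix $\vec{w}_x$.

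Next I would compute the monodromy of this lift. Restrict $\pmb{s}_x$ to a small chart $\inte_d\ltimes\tilde{U}$ around $p$, where $d$ is the (a priori unknown) order of the structure group at $p$ and $\tilde U$ is a disk with $\inte_d$ acting by rotation. Because $\pmb{s}_x$ covers $s_x$ and sends $p$ into the section, the induced map on structure groups is a homomorphism $\lambda:\inte_d\to \mathrm{Stab}_{G_C}(\vec{w}_x)$. Tracing the constant lift once around the equator using the morphism-level clutching $\tilde\gamma_v$, every piece of $\tilde\gamma_1$ acts trivially on $\vec{w}_x$ except the two wrap-around pieces, namely the cases carrying $h\cdot D(-v)$ (for $t\in(\tfrac12,1)$) and $h\cdot D(v)$ (for $t\in(-\tfrac12,0)$). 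These contribute exactly the automorphism $D(-v)$ of the fiber point $x$. Since $\vec{w}_x$ is fixed, $D(-v)\in \mathrm{Stab}_{G_C}(\vec{w}_x)$ is an honest isotropy element, and by definition of $m_v=\mathrm{ord}(D(-v))$ it generates a cyclic subgroup $\langle D(-v)\rangle\cong\inte_{m_v}$.

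Finally I would conclude by globalizing. Over $S^2_{orb}\setminus\{p\}$, which is a manifold, the lift is the untwisted constant section and carries no holonomy, so all of the monodromy computed above is concentrated at $p$. Consistency of $\pmb{s}_x$ as a groupoid morphism then forces $\lambda$ to send a generator of $\inte_d$ to the monodromy element $D(-v)$. Representability of the sectional orbifold morphism (injectivity on structure groups) gives $d=\mathrm{ord}(D(-v))=m_v$, so the structure group at $p$ is $\langle D(-v)\rangle\cong\inte_{m_v}$; in the degenerate case $m_v=1$ this records that $p$ is a trivial orbipoint and $\pmb{s}_x$ is an ordinary section, consistent with the phrase ``possibly a trivial one.''

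The main obstacle will be carrying out the holonomy computation rigorously inside the intricate presentation $\gG_{\E_{v,C}}$: one must verify that, among all the gluing pieces of $\tilde\gamma_v$ and the relations $rel_{Mor,i}$, the \emph{only} nontrivial contribution to parallel transport of $\vec{w}_x$ around the equator is the single factor $D(-v)$ coming from the wrap-around morphisms, and that the resulting generator of the structure group has order exactly $m_v$ rather than a proper divisor of it. Establishing this exactness (equivalently, the injectivity of $\lambda$) is the crux, and it is precisely what pins the order of the orbipoint to $m_v=\mathrm{ord}(D(v))$.
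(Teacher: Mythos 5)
Your argument is correct and follows essentially the same route as the paper: both work in the explicit chart $\gG_{\E_{v,C}}$, observe that the restriction of $\pmb{s}_x$ away from $p$ is the constant lift whose only nontrivial holonomy around the equator is the wrap-around factor $D(-v)$, and then pin down the structure group at $p$ as $\langle D(-v)\rangle\cong\inte_{m_v}$ via representability. The paper handles the final uniqueness/injectivity step by writing down the explicit extension over $p$ and citing \cite[Proposition 2.47]{TW}, which is exactly the point you correctly identify as the crux.
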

\begin{proof}
Given any orbifold morphism from a sphere with at most one orbifold point at $p$, it determines an orbifold morphism $\check{\pmb{s}}$ away from $p$ by restriction. Without loss of generality, we may assume $p$ to be the north pole of the sphere. Since there is no other orbifold point on the sphere and $\pmb{s}$ lifts the zero section, $\check{\pmb{s}}$ can be represented by the groupoid morphism:
\begin{eqnarray*}
\check{\pmb{s}}_{0}=(id,0): \ Ob(U_{\check{S}^{2}})=
\begin{aligned}
\begin{tikzpicture}
 \def\R{1} % radius
\shadedraw[shading=ball,ball color=red](0.4*\R,-0.3*\R) arc (-asin(0.6):-180+asin(0.6):0.5*\R) arc (180:360:0.4 and 0.06) -- cycle;
 \filldraw[gray, ultra nearly transparent] (0.4*\R,-0.3*\R) arc (-asin(0.6):180+asin(0.6):0.5*\R) arc (180:0:0.4 and 0.06)-- cycle;

\fill[gray, semitransparent] (0,-0.3*\R) ellipse (0.4 and 0.05); %manually change 0.4 and 0.05 to 0.4R and 0.05R
\end{tikzpicture} 
\end{aligned}
 \sqcup  
 \begin{aligned}
\begin{tikzpicture}
 \def\R{1} % radius
 \shadedraw[shading=ball,ball color=red, white] (0,0) circle (0.5*\R);
 \filldraw[fill=white, draw=black, fill opacity=0] (0,0.5*\R) circle (0.06);
 \filldraw[fill=white, draw=black, fill opacity=0] (0,-0.5*\R) circle (0.06);
\draw[thick, double] (0,0.5*\R) arc (90:-90:0.5*\R);
\end{tikzpicture} 
\end{aligned}
\sqcup 
\begin{aligned}
\begin{tikzpicture}
 \def\R{1} % radius
 \shadedraw[shading=ball,ball color=red, white] (0,0) circle (0.5*\R);
  \filldraw[fill=white, draw=black, fill opacity=0] (0,0.5*\R) circle (0.06);
 \filldraw[fill=white, draw=black, fill opacity=0] (0,-0.5*\R) circle (0.06);
 \draw[thick, double] (0,-0.5*\R) arc (-90:-270:0.5*\R);
\end{tikzpicture} 
\end{aligned}
\ \ \ \ \ \ \ \ \ \ \ \ \ \ \ \ \ \ \ \ \ \ \ \ \ \ \ \ \ \ \ \ \ \ \ \ \ \ \ \ \ \ \ \ \longrightarrow\ \ \ \ \ \ \ \ \ Ob(\gG_{\E})
\end{eqnarray*}

\begin{eqnarray*}
\check{\pmb{s}}_{1}=(id,0,\eta): \ Mor(U_{\check{S}^{2}})=
\begin{aligned}
\begin{tikzpicture}
 \def\R{1} % radius
\shadedraw[shading=ball,ball color=red](0.4*\R,-0.3*\R) arc (-asin(0.6):-180+asin(0.6):0.5*\R) arc (180:360:0.4 and 0.06) -- cycle;
 \filldraw[gray, ultra nearly transparent] (0.4*\R,-0.3*\R) arc (-asin(0.6):180+asin(0.6):0.5*\R) arc (180:0:0.4 and 0.06)-- cycle;

\fill[gray, semitransparent] (0,-0.3*\R) ellipse (0.4 and 0.05); %manually change 0.4 and 0.05 to 0.4R and 0.05R
 \filldraw[fill=white, draw=black, fill opacity=0] (0.4*\R,-0.3*\R) circle (0.06);
 \filldraw[fill=white, draw=black, fill opacity=0] (0,-0.5*\R) circle (0.06);
 \draw[thick, double] (0.4*\R,-0.3*\R) arc (-asin(0.6):-90:0.5*\R);
\end{tikzpicture} 
\end{aligned}
\sqcup    
\begin{aligned}
\begin{tikzpicture}
\def\R{1} % radius
\shadedraw[shading=ball,ball color=red](0.4*\R,-0.3*\R) arc (-asin(0.6):-180+asin(0.6):0.5*\R) arc (180:360:0.4 and 0.06) -- cycle;
\filldraw[gray, ultra nearly transparent] (0.4*\R,-0.3*\R) arc (-asin(0.6):180+asin(0.6):0.5*\R) arc (180:0:0.4 and 0.06)-- cycle;

\fill[gray, semitransparent] (0,-0.3*\R) ellipse (0.4 and 0.05); %manually change 0.4 and 0.05 to 0.4R and 0.05R
\filldraw[fill=white, draw=black, fill opacity=0] (-0.4*\R,-0.3*\R) circle (0.06);
\filldraw[fill=white, draw=black, fill opacity=0] (0,-0.5*\R) circle (0.06);
\draw[thick, double] (0,-0.5*\R) arc (-90:-180+asin(0.6):0.5*\R);
\end{tikzpicture} 
\end{aligned}
\sqcup   
\begin{aligned}
\begin{tikzpicture}
\def\R{1} % radius
\shadedraw[shading=ball,ball color=red, white] (0,0) circle (0.5*\R);
\draw[thick, double] (0,0) circle (0.5*\R);
\end{tikzpicture} 
\end{aligned}
&& \sqcup\\
\begin{aligned}
\begin{tikzpicture}
 \def\R{1} % radius
\shadedraw[shading=ball,ball color=red](0.4*\R,-0.3*\R) arc (-asin(0.6):-180+asin(0.6):0.5*\R) arc (180:360:0.4 and 0.06) -- cycle;
 \filldraw[gray, ultra nearly transparent] (0.4*\R,-0.3*\R) arc (-asin(0.6):180+asin(0.6):0.5*\R) arc (180:0:0.4 and 0.06)-- cycle;

\fill[gray, semitransparent] (0,-0.3*\R) ellipse (0.4 and 0.05); %manually change 0.4 and 0.05 to 0.4R and 0.05R
 \filldraw[fill=white, draw=black, fill opacity=0] (0.4*\R,-0.3*\R) circle (0.06);
 \filldraw[fill=white, draw=black, fill opacity=0] (0,-0.5*\R) circle (0.06);
 \draw[thick, double] (0.4*\R,-0.3*\R) arc (-asin(0.6):-90:0.5*\R);
\end{tikzpicture} 
\end{aligned}
\sqcup
\begin{aligned}
\begin{tikzpicture}
\def\R{1} % radius
\shadedraw[shading=ball,ball color=red](0.4*\R,-0.3*\R) arc (-asin(0.6):-180+asin(0.6):0.5*\R) arc (180:360:0.4 and 0.06) -- cycle;
\filldraw[gray, ultra nearly transparent] (0.4*\R,-0.3*\R) arc (-asin(0.6):180+asin(0.6):0.5*\R) arc (180:0:0.4 and 0.06)-- cycle;

\fill[gray, semitransparent] (0,-0.3*\R) ellipse (0.4 and 0.05); %manually change 0.4 and 0.05 to 0.4R and 0.05R
\filldraw[fill=white, draw=black, fill opacity=0] (-0.4*\R,-0.3*\R) circle (0.06);
\filldraw[fill=white, draw=black, fill opacity=0] (0,-0.5*\R) circle (0.06);
\draw[thick, double] (0,-0.5*\R) arc (-90:-180+asin(0.6):0.5*\R);
\end{tikzpicture} 
\end{aligned}
&& \sqcup 
\begin{aligned}
\begin{tikzpicture}
\def\R{1} % radius
\shadedraw[shading=ball,ball color=red, white] (0,0) circle (0.5*\R);
\draw[thick, double] (0,0) circle (0.5*\R);
\end{tikzpicture} 
\end{aligned}
\ \ \ \ \ \ \ \ \ \ \ \ \ \ \ \ \ \ \ \longrightarrow\ \ \ \ \ Mor(\gG_{\E})\\
\sqcup
\begin{aligned}
\begin{tikzpicture}
 \def\R{1} % radius
\shadedraw[shading=ball,ball color=red](0.4*\R,-0.3*\R) arc (-asin(0.6):-180+asin(0.6):0.5*\R) arc (180:360:0.4 and 0.06) -- cycle;
 \filldraw[gray, ultra nearly transparent] (0.4*\R,-0.3*\R) arc (-asin(0.6):180+asin(0.6):0.5*\R) arc (180:0:0.4 and 0.06)-- cycle;

\fill[gray, semitransparent] (0,-0.3*\R) ellipse (0.4 and 0.05); %manually change 0.4 and 0.05 to 0.4R and 0.05R
\end{tikzpicture} 
\end{aligned}
&& \sqcup
\begin{aligned}
\begin{tikzpicture}
 \def\R{1} % radius
 \shadedraw[shading=ball,ball color=red, white] (0,0) circle (0.5*\R);
 \filldraw[fill=white, draw=black, fill opacity=0] (0,0.5*\R) circle (0.06);
 \filldraw[fill=white, draw=black, fill opacity=0] (0,-0.5*\R) circle (0.06);
\draw[thick, double] (0,0.5*\R) arc (90:-90:0.5*\R);
\end{tikzpicture} 
\end{aligned}
\sqcup   
\begin{aligned}
\begin{tikzpicture}
 \def\R{1} % radius
 \shadedraw[shading=ball,ball color=red, white] (0,0) circle (0.5*\R);
  \filldraw[fill=white, draw=black, fill opacity=0] (0,0.5*\R) circle (0.06);
 \filldraw[fill=white, draw=black, fill opacity=0] (0,-0.5*\R) circle (0.06);
 \draw[thick, double] (0,-0.5*\R) arc (-90:-270:0.5*\R);
\end{tikzpicture} 
\end{aligned}
\end{eqnarray*}
where 
\begin{eqnarray*}
\eta(re^{i\theta}\to re^{i\theta'})= && D(-v)\in G_{C} \ \ \ \ \ \  for \ \ re^{i\theta}\to re^{i\theta'}\in
\begin{aligned}
\begin{tikzpicture}
 \def\R{1} % radius
\shadedraw[shading=ball,ball color=red](0.5*\R,0) arc (0:180:0.5*\R) arc (180:360:0.5 and 0.1) -- cycle;
\fill[fill=gray,  semitransparent] (0,0) ellipse (0.5 and 0.1); %manually change 0.4 and 0.05 to 0.4R and 0.05R
 \fill[gray, ultra nearly transparent] (-0.5*\R,0) arc (-180:0:0.5*\R) arc (0:-180:0.5 and 0.1) -- cycle;
 \draw[gray, loosely dashed] (0.5*\R,0) arc (0:180:0.5 and 0.1);
 \filldraw[fill=white, draw=black, fill opacity=0] (0.5*\R,0) circle (0.06);
 \filldraw[fill=white, draw=black, fill opacity=0] (-0.5*\R,0) circle (0.06);
\draw[thick, double] (0.5*\R,0) arc (0:180:0.5*\R);
\end{tikzpicture} 
\end{aligned}
\ \ \ \ (\text{where} \ \theta'=\theta\pm2\pi)\\
&& Id\ \ \ \in G_{C} \ \ \ \ \ \ \ \ \  \text{everywhere else}.
\end{eqnarray*}
To extend  $\check{\pmb{s}}$ to $p$, we define $\pmb{s}$ on
$$
\inte_{m_{v}}\ltimes
\begin{aligned}
\begin{tikzpicture}
 \def\R{1} % radius
\shadedraw[shading=ball,ball color=red](0.4*\R,0.3*\R) arc (asin(0.6):180-asin(0.6):0.5*\R) arc (180:360:0.4 and 0.06) -- cycle;
 \fill[gray, ultra nearly transparent] (0.4*\R,0.3*\R) arc (asin(0.6):-180-asin(0.6):0.5*\R) arc (-180:0:0.4 and 0.06) -- cycle;
\fill[gray, semitransparent] (0,0.3*\R) ellipse (0.4 and 0.05); %manually change 0.4 and 0.05 to 0.4R and 0.05R
\end{tikzpicture}
\end{aligned}
 \twoarrows 
 G_{C}\ltimes
\begin{aligned}
\begin{tikzpicture}
 \def\R{1} % radius
\shadedraw[shading=ball,ball color=red](0.4*\R,0.3*\R) arc (asin(0.6):180-asin(0.6):0.5*\R) arc (180:360:0.4 and 0.06) -- cycle;
 \fill[gray, ultra nearly transparent] (0.4*\R,0.3*\R) arc (asin(0.6):-180-asin(0.6):0.5*\R) arc (-180:0:0.4 and 0.06) -- cycle;
\fill[gray, semitransparent] (0,0.3*\R) ellipse (0.4 and 0.05); %manually change 0.4 and 0.05 to 0.4R and 0.05R
\end{tikzpicture}
\end{aligned}
\times V_{C}
$$
by 
$$\pmb{s}_{0}(re^{i\theta})=(re^{i2\theta},0),$$
$$\pmb{s}_{1}(re^{i\theta} \xrightarrow{g} g \cdot re^{i\theta})=(re^{i2\theta},0)\xrightarrow{\mathfrak{i}(g)} (re^{i2\theta},0).$$
It is straightforward to verify the morphism above together with $\check{\pmb{s}}$ define a morphism from an orbisphere with $\inte_{m_{v}}$-orbipoint at $p$ to $\gG_{\E}$. 

This is the only representative orbifold morphism extends $\check{\pmb{s}}$. We refer the readers to \cite[Proposition 2.47]{TW} for a detailed reason in an explicit example.
\end{proof}

Now we describe the pullback bundle $\pmb{s}_{x}^{*} T\E_{v}$ which is the same as $\pmb{s}_{x}^{*} T\E_{v,C}$ since the image of $\pmb{s}_{x}$ is contained in $\E_{v,C}$. The pullback is obvious on $\check{S}^{2}$ since there is no orbipoint on the domain. The orbipoint north pole is covered by:
$$
\inte_{m_{v}}\ltimes \ \ \ \ \
\begin{aligned}
\begin{tikzpicture}
 \def\R{1} % radius
\shadedraw[shading=ball,ball color=red](0.4*\R,0.3*\R) arc (asin(0.6):180-asin(0.6):0.5*\R) arc (180:360:0.4 and 0.06) -- cycle;
 \fill[gray, ultra nearly transparent] (0.4*\R,0.3*\R) arc (asin(0.6):-180-asin(0.6):0.5*\R) arc (-180:0:0.4 and 0.06) -- cycle;
\fill[gray, semitransparent] (0,0.3*\R) ellipse (0.4 and 0.05); %manually change 0.4 and 0.05 to 0.4R and 0.05R
\end{tikzpicture}\end{aligned}
\times \com\times \com^{n} 
$$
with $g\cdot (z,\xi,\eta)=(g\cdot z,\xi,g\eta)$. 

Note that $g$ acts on the $\com$ component trivially since it acts on the horizontal (base) direction of $\E_{v,C}$ trivially. 

\begin{rmk}
We remark that the horizontal direction of the pullback bundle $\pmb{s}_{x}^{*}T\E_{v}$, is not the tangent bundle of $\com P^{1}(1,m_{v})$.
\end{rmk}

The following Lemma follows from a direct computation.
\begin{lem}\label{constsectwist}
The evaluation map of $\pmb{s}_{x}$ at the north pole (possibly an orbipoint) of $S^{2}$ lies in the twisted sector $\E_{v,(v^{-1})}$.
\end{lem}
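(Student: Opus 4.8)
The plan is to read off the twisted sector directly from the explicit local model of $\pmb{s}_x$ near the orbipoint $p$ that was constructed in the proof of Proposition \ref{constantsection}, and then to match the resulting isotropy datum with the box element $v^{-1}$ via the lattice description of $I\X(\Sigfan)$ from Section \ref{fanconstr}.

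First I would invoke the general principle that the evaluation of an orbifold morphism at an orbifold marked point takes values in the inertia orbifold, landing in the twisted sector $\X_{(w)}$ whose box element $w\in\text{Box}(\sigma(v))$ records, inside $G_C=\mathbf{N}/\mathbf{N}_{\sigma(C)}$, the image of the distinguished generator of the local isotropy group. By Proposition \ref{constantsection} this local group is $\inte_{m_v}$, and from the explicit extension of $\pmb{s}$ over $p$ (the formula $\pmb{s}_1(re^{i\theta}\xrightarrow{g}g\cdot re^{i\theta})=(re^{i2\theta},0)\xrightarrow{\mathfrak{i}(g)}(re^{i2\theta},0)$) its generator acts on the fiber coordinates $\vec{w}\in V_C\cong\com^n$ through $\mathfrak{i}$, i.e.\ by the diagonal matrix $D(-v)$, while acting trivially on the horizontal $\com$-direction.

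Next I would translate the linear action $D(-v)$ into a lattice class. Using the $G_C$-action formula $g\cdot w_j=e^{2\pi i\langle\tilde{g},u_j\rangle}w_j$ from Section \ref{fanconstr} together with the definition of $D(tv)=\operatorname{diag}(e^{-i2\pi\langle u_1,tv\rangle},\dots,e^{-i2\pi\langle u_n,tv\rangle})$, a direct comparison of the diagonal entries identifies $D(-v)$ with the class of $v$ (up to the pairing sign convention) in $\mathbf{N}/\mathbf{N}_{\sigma(C)}$. The crucial algebraic input is the defining property of the inverse box element: with $\tau=\sigma(v)$, the element $v^{-1}\in\text{Box}(\tau)$ is characterized by $v+v^{-1}\in\mathbf{N}_\tau$, so that $[v^{-1}]=[-v]$ in $\mathbf{N}/\mathbf{N}_\tau$. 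Composing the class read off from $D(-v)$ with the inverse built into the evaluation convention yields exactly the class $[-v]=[v^{-1}]$, whose box representative is $v^{-1}$; hence the evaluation lands in $\X_{(v^{-1})}=\E_{v,(v^{-1})}$.

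The main obstacle will be bookkeeping two independent sign conventions so that they compose correctly: (i) which power of the canonical generator of $\inte_{m_v}$ is recorded by the evaluation—equivalently whether the convention sends a generator $g$ to $\X_{(g)}$ or to $\X_{(g^{-1})}$—and (ii) the sign in $D(-v)$ relative to the pairing in the $G_C$-action formula. Both must be tracked against the involution convention $\mathcal{I}(x,v)=(x,v^{-1})$ fixed in Section \ref{fanconstr}, since distinguishing $v$ from $v^{-1}$ (which are genuinely different box elements, as $v^{-1}\neq -v$) is precisely the content of the lemma. Once these conventions are pinned down the verification is the promised direct computation, requiring no further geometric input.
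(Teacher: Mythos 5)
Your proposal is correct and matches what the paper intends: the paper gives no argument beyond the remark that the lemma ``follows from a direct computation,'' and the computation you outline --- reading off the isotropy generator $D(-v)=\mathrm{diag}(e^{-2\pi i\langle u_j,v\rangle})$ at the north-pole orbipoint from the local model in Proposition \ref{constantsection}, identifying it via the $G_C$-action formula with the class $[-v]\in\mathbf{N}/\mathbf{N}_{\sigma(C)}$, and recognizing $v^{-1}$ as the unique $\mathrm{Box}$ representative of that class --- is exactly that direct computation. Your explicit flagging of the two sign conventions that must compose to distinguish $v^{-1}$ from $v$ is the right point to be careful about and is handled correctly.
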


Now we describe $(\pmb{s}_{x}^{*}T\E_{v})^{de}$, the desingularized bundle of $\pmb{s}_{x}^{*}T\E_{v}$. Let $\{i^{v}_{1},...,i^{v}_{n}\}\subset \{1,...,N\}$ be the subset such that  the cone spanned by $\{b_{i^{v}_{j}}|j=1,...,n\}$ over $\ration$ is the minimal cone $\sigma(v)$ containing $v$. Then $(\pmb{s}_{x}^{*}T\E_{v})^{de}$ is constructed by glueing two copies of the trivial vector bundles over disk $D\times (\com\times V_{c})\to D$ along the boundaries, where the glueing map $\rho:S^{1}\to Sp(\com\times V_{c},\omega_{0})$ is given by
$$
\rho(e^{i2\pi t})=diag(e^{i2\pi\cdot (-2t)},e^{i2\pi \lceil r^{v}_{1}\rceil t},...,e^{i2\pi \lceil r^{v}_{n} \rceil t}),
$$
where $\{r^{v}_{j}\}_{j=1}^{n}$ is defined by $v=\sum_{j=1}^{n} r^{v}_{j} b_{i^{v}_{j}}$, $\lceil r^{v}_{j}\rceil$ is the smallest integer no less than $r^{v}_{j}$.

\begin{lem}\label{desingsum}
\begin{enumerate}
\item\label{desingchern} The first Chern number of the desingularized bundle is
$$c_{1}((\pmb{s}_{x}^{*}T\E_{v})^{de} )=2-dim\sigma(v).$$
\item\label{summand} Each summand of $(\pmb{s}_{x}^{*}T\E_{v})^{de}$ has Chern number at least $-1$.
\end{enumerate}
\end{lem}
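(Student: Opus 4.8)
The plan is to compute both Chern numbers directly from the diagonal clutching function $\rho$ recorded just above the statement. Since $\rho(e^{2\pi i t})$ is diagonal, the desingularized bundle $(\pmb{s}_x^*T\E_v)^{de}$ splits, over the desingularized base sphere $S^2 = D_0\cup_{S^1}D_\infty$, as a direct sum of $n+1$ complex line bundles, one for each diagonal entry. Each summand is a clutching of two trivial line bundles over the two disks glued by the corresponding entry $e^{2\pi i k t}$, so its first Chern number is $\pm k$, and the whole computation reduces to reading off the $n+1$ integers $-2,\lceil r^v_1\rceil,\dots,\lceil r^v_n\rceil$ appearing in $\rho$.

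First I would fix the sign. The first (horizontal) summand, clutched by $e^{2\pi i(-2)t}$, is the base-direction factor and must carry first Chern number $+2$ (consistently with $c_1(TS^2)=2$); this pins down the normalization in which the summand clutched by $e^{2\pi i k t}$ has first Chern number $-k$. Summing over the $n+1$ entries then gives
$$c_1\big((\pmb{s}_x^*T\E_v)^{de}\big) = 2 - \sum_{j=1}^n \lceil r^v_j\rceil,$$
which proves \ref{desingchern} once I identify the sum with $\dim\sigma(v)$. For that, note that $v$ lies in the box of its minimal cone, so its coefficients satisfy $0\le r^v_j<1$; hence $\lceil r^v_j\rceil\in\{0,1\}$, and $\lceil r^v_j\rceil=1$ exactly when $r^v_j>0$, i.e. exactly when $b_{i^v_j}$ is one of the rays spanning $\sigma(v)$. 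Because $\Sigma$ is simplicial, the number of these rays equals $\dim\sigma(v)$, giving $\sum_{j=1}^n\lceil r^v_j\rceil=\dim\sigma(v)$ and the stated formula.

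Part \ref{summand} then requires no extra work: the horizontal summand has first Chern number $2\ge -1$, while each of the $n$ vertical summands has first Chern number $-\lceil r^v_j\rceil$, which lies in $\{0,-1\}$ precisely because $0\le r^v_j<1$. Thus every summand has first Chern number at least $-1$.

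The only genuinely delicate points are bookkeeping: fixing the orientation/clutching sign so that the $e^{2\pi i(-2)t}$ entry yields $+2$ rather than $-2$, and checking the combinatorial identity $\sum_j\lceil r^v_j\rceil=\dim\sigma(v)$, that is, that the strictly positive coefficients in $v=\sum_j r^v_j b_{i^v_j}$ index exactly the extremal rays of the minimal cone $\sigma(v)$. Both follow from the box condition $0\le r^v_j<1$ together with the simpliciality of $\Sigma$, so no hard analysis is involved.
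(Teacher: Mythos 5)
Your proof is correct and follows essentially the same route as the paper: the paper computes part (1) as the Maslov index $\mu(\rho^{-1})=2-\sum_{j}\lceil r^{v}_{j}\rceil=2-\dim\sigma(v)$ and deduces part (2) from exactly the splitting into line bundles of Chern numbers $2$ and $-\lceil r^{v}_{1}\rceil,\dots,-\lceil r^{v}_{n}\rceil$ that you use, so your direct summation of the clutching degrees is the same computation packaged slightly differently. The only caveat worth noting is that for $v=y_k$ with $m_k=1$ one has $r^{v}_{j}=1$ rather than $r^{v}_{j}<1$, but $\lceil r^{v}_{j}\rceil=1$ still holds there, so your identification $\sum_j\lceil r^{v}_{j}\rceil=\dim\sigma(v)$ is unaffected.
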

\begin{proof}
To show (\ref{desingchern}), note that the chern number can be computed from the Maslov index of the loop $\rho^{-1}$ in the group of symplectic matrices. The later is 
$$\mu(\rho^{-1})=2-\sum_{j=1}^{n} \lceil r^{v}_{j} \rceil=2-\sum_{ r^{v}_{j} \ne 0} 1=2-dim\sigma(v).$$

The desingularized bundle splits into line bundles whose first Chern numbers are 2 and $-\lceil r_{1}^{v} \rceil,..., -\lceil r_{n}^{v} \rceil$. Thus we have (\ref{summand}).
\end{proof}

The desingularized bundle of the vertical subbundle $\pmb{s}_{x}^{*}T^{vert}\E_{v}$ is the same as the vertical subbundle of the desingularized bundle $(\pmb{s}_{x}^{*}T\E_{v})^{de}$, which is constructed by glueing two copies of the trivial vector bundles over disk $D\times (\com\times V_{c})\to D$ along the boundaries, where the glueing map $\rho:S^{1}\to Sp(\com\times V_{c},\omega_{0})$ is given by
$$
\rho(e^{i2\pi t})=diag(e^{i2\pi \lceil r^{v}_{1}\rceil t},...,e^{i2\pi \lceil r^{v}_{n} \rceil t}).
$$

Now since the first Chern number of an orbibundle and the first Chern number of its desingularized bundle differ by the degree shifting number, we have:
\begin{lem}\label{chernclass}
$$c_1(\pmb{s}_{x}^{*}T\E_{v}) =2-dim\sigma(v)+\iota_{v^{-1}},$$
and
$$c_1(\pmb{s}_{x}^{*}T^{vert}\E_{v})=-dim\sigma(v)+\iota_{v^{-1}}.$$
Moreover when $m_{v}\neq 1$, $c_1(\pmb{s}_{x}^{*}T\E_{v})  =2-\iota_{v}$ and $c_1(\pmb{s}_{x}^{*}T^{vert}\E_{v})=-\iota_{v}$.
\end{lem}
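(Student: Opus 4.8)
The plan is to read off both first-Chern numbers from the desingularized bundle and then correct by the appropriate age. The essential geometric input is already contained in Lemma~\ref{desingsum}, which gives $c_1\big((\pmb{s}_x^{*}T\E_v)^{de}\big)=2-\dim\sigma(v)$, together with the general principle recalled immediately before the statement: the first Chern number of an orbibundle exceeds that of its desingularization by the degree shifting number attached to the orbifold point. So I would first invoke Lemma~\ref{constsectwist} to identify this sector --- the evaluation of $\pmb{s}_x$ at the unique orbipoint (the north pole) lands in $\E_{v,(v^{-1})}$, whose degree shifting number is $\iota_{v^{-1}}$. Applying the age-shift relation then gives
$$c_1(\pmb{s}_x^{*}T\E_v)=\big(2-\dim\sigma(v)\big)+\iota_{v^{-1}},$$
which is the first equality.

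For the vertical bundle I would argue identically. The desingularization $(\pmb{s}_x^{*}T^{vert}\E_v)^{de}$ is glued by the same map $\rho$ as before but with the base (horizontal) factor, which carried Chern number $2$, deleted; the Maslov-index computation in the proof of Lemma~\ref{desingsum} then yields $c_1\big((\pmb{s}_x^{*}T^{vert}\E_v)^{de}\big)=-\dim\sigma(v)$. Shifting again by $\iota_{v^{-1}}$ gives $c_1(\pmb{s}_x^{*}T^{vert}\E_v)=-\dim\sigma(v)+\iota_{v^{-1}}$, the second equality. For the ``moreover'' clause it then suffices to establish the purely combinatorial identity
$$\iota_v+\iota_{v^{-1}}=\dim\sigma(v)\qquad\text{whenever } m_v\neq 1,$$
since substituting it into the two displayed equalities immediately produces $c_1(\pmb{s}_x^{*}T\E_v)=2-\iota_v$ and $c_1(\pmb{s}_x^{*}T^{vert}\E_v)=-\iota_v$.

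To prove the identity I would write $v=\sum_j r_j b_{i_j}$ over the generators $\{b_{i_j}\}$ of the minimal cone $\sigma(v)$, with $0\le r_j<1$; minimality forces every $r_j>0$. The defining property of $v^{-1}$ (the unique box element with $v+v^{-1}\in \mathbf{N}_{\sigma(v)}$) then forces $v^{-1}=\sum_j(1-r_j)b_{i_j}$, where crucially $1-r_j\in(0,1)$ because $r_j\in(0,1)$. Hence $\iota_v+\iota_{v^{-1}}=\sum_j r_j+\sum_j(1-r_j)=\dim\sigma(v)$, there being exactly $\dim\sigma(v)$ summands.

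The one point requiring care --- and the only place the hypothesis enters --- is verifying that no coordinate $r_j$ is an integer, so that all $r_j$ genuinely lie in $(0,1)$. Here I would use that $m_v=\operatorname{ord}(D(v))$ is the order of $\operatorname{diag}(e^{-2\pi i r_1},\dots,e^{-2\pi i r_n})$, which equals $1$ exactly when every $r_j\in\inte$; for $0\le r_j<1$ this means $v$ reduces to $0$ and represents the untwisted sector. The genuinely degenerate case is $v=y_i$ with label $m_i=1$, where $\dim\sigma(v)=1$ but $\iota_v=\iota_{v^{-1}}=0$, so the identity fails by exactly $1$ --- which is precisely why the clause is restricted to $m_v\neq 1$. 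For the vectors used later, $v\in\text{Gen}(\Sigma)$, one has $r_j=a_j/m_j$ with $0<a_j<1$, so every $r_j\in(0,1)$ and $m_v\neq 1$ automatically. I expect this bookkeeping around integral coordinates, rather than any geometric input, to be the only real obstacle, since the Chern-number content is already supplied by Lemma~\ref{desingsum}.
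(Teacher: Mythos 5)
Your argument is correct and is essentially the one the paper intends: the lemma is stated there without a separate proof, being read off from Lemma \ref{desingsum} together with the age-shift principle (with the sector identified by Lemma \ref{constsectwist}), exactly as you do. Your explicit verification of the identity $\iota_v+\iota_{v^{-1}}=\dim\sigma(v)$ for $m_v\neq 1$, including why the case $m_v=1$ must be excluded, correctly fills in the combinatorial step the paper leaves implicit.
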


In this paper, the Hamiltonian loops we consider is always Hamiltonian circle action in the sense of \cite{LM}. In this case, the orbifiber bundle has an alternative description: 

Let $S^{1}$ act on $S^{3}$ by $e^{i\theta}\cdot (z_{1},z_{2}):=(e^{i\theta}z_{1},e^{i\theta}z_{2})$, where $|z_{1}|^{2}+|z_{2}|^{2}=1$. Then $\E_{v}=S^{3}\times \X/S^{1}$, where the $S^{1}$ action on $\X$ corresponds to the lattice point $v$, i.e. generated by $H_{v}$.

To see that the quotient construction is equivalent to the glueing construction, we need the following lemma:
\begin{lem}\label{S3atlas}
For any atlas $\{U_{\alpha}\}$ of $S^{2}$, there exists an $S^{1}$-invariant atlas $\{\tilde{U}_{\alpha}\}$ of $S^{3}$ such that $U_{\alpha}=\tilde{U}_{\alpha}/S^{1}$.
\end{lem}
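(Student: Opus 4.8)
The plan is to realize the given $S^1$-action as the Hopf fibration and then simply take preimages. First I would observe that the action $e^{i\theta}\cdot(z_1,z_2)=(e^{i\theta}z_1,e^{i\theta}z_2)$ on $S^3=\{|z_1|^2+|z_2|^2=1\}$ is free, since $(e^{i\theta}z_1,e^{i\theta}z_2)=(z_1,z_2)$ with $(z_1,z_2)\neq(0,0)$ forces $e^{i\theta}=1$. Hence the quotient is a smooth manifold and the projection $\pi:S^3\to S^3/S^1$ is a smooth principal $S^1$-bundle, namely the Hopf fibration onto $\mathbb{CP}^1\cong S^2$, given explicitly by $(z_1,z_2)\mapsto[z_1:z_2]$. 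In particular $\pi$ is continuous, open, surjective, and realizes $S^2$ as the orbit space, so that $\pi$ is exactly the quotient map by the $S^1$-action.

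Given the atlas $\{U_\alpha\}$ of $S^2$, I would set $\tilde U_\alpha:=\pi^{-1}(U_\alpha)$. Three properties then need checking, all immediate. The sets $\tilde U_\alpha$ are open by continuity of $\pi$ and $S^1$-invariant because a preimage under $\pi$ is a union of $\pi$-fibers, i.e. of $S^1$-orbits. They cover $S^3$ since $\{U_\alpha\}$ covers $S^2=\pi(S^3)$. Finally $\tilde U_\alpha/S^1=U_\alpha$: as $\pi$ is the quotient map and surjects onto $U_\alpha$, the orbit space of the saturated set $\tilde U_\alpha$ is canonically identified with its image $\pi(\tilde U_\alpha)=U_\alpha$.

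To upgrade the invariant open cover $\{\tilde U_\alpha\}$ to a genuine (groupoid) atlas compatible with the one downstairs, I would invoke local triviality of the Hopf bundle. Each chart domain $U_\alpha$ is an open subset of a surface, hence a noncompact surface with $H^2(U_\alpha;\mathbb{Z})=0$; therefore the circle bundle $\pi|_{\tilde U_\alpha}:\tilde U_\alpha\to U_\alpha$ is trivial and admits an $S^1$-equivariant diffeomorphism $\tilde U_\alpha\cong U_\alpha\times S^1$ (concretely, over the complement of either coordinate point one reads off the trivialization from the Hopf charts $[z_1:z_2]\mapsto z_2/z_1$ and $z_1/z_2$). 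Combining the chart of $U_\alpha$ with this $S^1$-factor produces the local presentation of $\tilde U_\alpha$ covering that of $U_\alpha$, and the transition data on the overlaps $\tilde U_\alpha\cap\tilde U_\beta=\pi^{-1}(U_\alpha\cap U_\beta)$ are precisely the $S^1$-equivariant lifts of the transition data on $U_\alpha\cap U_\beta$.

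The only real content, and the step I expect to require the most care, is the triviality of $\pi|_{\tilde U_\alpha}$ used in the last paragraph: globally the Hopf bundle is nontrivial (its Euler class generates $H^2(S^2;\mathbb{Z})$), so one must use that each $U_\alpha$ is a proper open subset of $S^2$ — equivalently that $\pi$ trivializes over the complement of a point — in order to conclude that the restriction is trivial. Everything else is the formal observation that taking full preimages under a principal-bundle projection automatically produces saturated open sets with the prescribed orbit spaces.
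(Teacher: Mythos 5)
Your argument is correct and is essentially the paper's own proof: both take the Hopf map $\chi:S^3\to S^2$ and define $\tilde U_\alpha$ as the preimage $\chi^{-1}(U_\alpha)$, using that the $S^1$-action preserves the Hopf fibers so these preimages are saturated open sets with the required quotients. Your additional verification of local triviality of the restricted bundle over each chart domain is a harmless elaboration beyond what the paper records.
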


\begin{proof}
Let $\chi:S^3\to S^2$ be the Hopf map. An atlas of $S^{2}$ determines an atlas of $S^{3}$ by pullback of $\chi$. This is the atlas with the required property, since $S^{1}$ acts on $S^{3}$ preserving the fibers.
\end{proof}

Now the abstract construction of $\X_{v}$ by glueing stacks can be carried out by glueing Lie groupoids\footnote{See \cite[Appendix]{TW} for definitions and details} as the following. Let $\gG_{\X}$ be the translation groupoid $G\ltimes Z$. The map $\tilde{\pmb{\gamma}}_{v}$ associated to the Hamiltonian loop is represented by $\gamma:U_{S^{1}}\times \gG_{\X}\to \gG_{\X}$ where $U_{S^{1}}$ is the groupoid determined by an atlas of $S^{1}$. Take an atlas of $S^{2}$ which gives arise the atlas of $S^{1}$ when cut along the equator, denote the corresponding groupoid chart of $S^{2}$ to be $U_{S^{2}}$ and $U_{S^{2}}^{+}$, $U_{S^{2}}^{-}$ the groupoid charts of the two half disks. Then $\E_{v}$ is represented by the groupoid glued from $U_{S^{2}}^{+}\times \gG_{\X}$ and $U_{S^{2}}^{-}\times \gG_{\X}$ using $\gamma$. A concrete example of this construction can be found in \cite{TW}.

Note that Lemma \ref{S3atlas} defines an atlas of $S^{3}$ for the atlas of $S^{2}$ used above. Denote $U_{S^{3}}$ as the groupoid chart of $S^{3}$ determined by this atlas. It is straightforward to construct an Lie groupoid isomorphism $$U_{S^3}\times \gG_{\X}/S^{1}\to U_{S^{2}}^{+}\times \X\sqcup U_{S^{2}}^{-}\times \X/\sim_{\gamma}.$$ 
Passing to stacks we have an diffeomorphism from $S^{3}\times \X/S^{1}$ to $\E_{v}$. Consequently there is an obvious orbifold morphism from $S^{3}\times \X$ to $\E_{v}$ determined by the quotient. We denote the morphism as $pr:S^{3}\times \X\to \E_{v}$. 

The coupling form $\textbf{u}_{v}$  has the following description:
Let $\alpha \in \Omega^1(S^3)$ be the usual contact form on the unit sphere, normalized so that  $d\alpha= \chi^*(\tau)$ where $\tau$ is the standard area form on $S^2$ with total area $1$. Then,
\begin{equation}\label{coupling}
    \textbf{u}_{v} = pr_*(\omega - dH_{v}\alpha).
\end{equation}
From the definition of the coupling class $\mathbf{u}_{v}$, it is easy to check the following:

\begin{lem}\label{couplingconst}
$$\mathbf{u}_{v}([s_x])=-H_{v}(x).$$
\end{lem}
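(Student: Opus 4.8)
The plan is to compute the pairing $\mathbf{u}_{v}([s_x]) = \int_{S^2} s_x^*\mathbf{u}_{v}$ directly from the quotient description $\E_{v}=S^{3}\times\X/S^{1}$ and the coupling form formula (\ref{coupling}), read as the statement that $\mathbf{u}_{v}$ is the closed basic $2$-form on $\E_{v}$ with $pr^*\mathbf{u}_{v}=\omega - d(H_{v}\alpha)$. First I would describe the section $s_x$ explicitly at the level of the presentation $S^{3}\times\X$. Since $H_{v}$ attains its maximum at $x$, the point $x$ is fixed by the circle action, so $S^{3}\times\{x\}$ is $S^{1}$-invariant and its image under $pr$ is exactly the section $s_x$. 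Concretely, setting $j_x\colon S^{3}\to S^{3}\times\X$, $z\mapsto(z,x)$, the equivalence $(z,x)\sim(gz,gx)=(gz,x)$ shows that $z\mapsto[z,x]$ descends along the Hopf map $\chi\colon S^{3}\to S^{2}$, and $pr\circ j_x = s_x\circ\chi$.

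The key step is then to pull the coupling form back along $j_x$, so that $\chi^*s_x^*\mathbf{u}_{v}=j_x^*\bigl(\omega - d(H_{v}\alpha)\bigr)$. The form $\omega$ is pulled back from $\X$ and $j_x$ is constant in the $\X$-direction, so $j_x^*\omega=0$. For the second term, $H_{v}$ restricts to the constant $H_{v}(x)$ along $S^{3}\times\{x\}$, hence $j_x^*(H_{v}\alpha)=H_{v}(x)\,\alpha$ and, using the normalization $d\alpha=\chi^*\tau$,
$$j_x^*d(H_{v}\alpha)=d\bigl(H_{v}(x)\,\alpha\bigr)=H_{v}(x)\,d\alpha=H_{v}(x)\,\chi^*\tau.$$
Combining these gives $\chi^*s_x^*\mathbf{u}_{v}=-H_{v}(x)\,\chi^*\tau$, and since $\chi$ is a submersion, $\chi^*$ is injective on forms, so $s_x^*\mathbf{u}_{v}=-H_{v}(x)\,\tau$. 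Integrating over $S^{2}$ and using that $\tau$ has total area $1$ then yields $\mathbf{u}_{v}([s_x])=-H_{v}(x)$, as claimed. (Note that this computation also confirms the intended reading of (\ref{coupling}): the naive term $dH_{v}\wedge\alpha$ would pull back to $0$ under $j_x$ and give $0$ rather than $-H_{v}(x)$, so the closed basic form $d(H_{v}\alpha)$ is the correct one.)

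The hard part will not be the de Rham calculation but the orbifold bookkeeping. By Proposition \ref{constantsection} the section is realized as an orbifold morphism $\pmb{s}_x\colon(S^{2}_{orb},p)\to\E_{v}$ from an orbisphere carrying a $\inte_{m_{v}}$-point at $p$, and one must confirm that the pairing $\mathbf{u}_{v}([s_x])$ is computed by the ordinary de Rham integral over the coarse sphere $S^{2}$ rather than by an orbifold-weighted integral. This holds because $\mathbf{u}_{v}$ is an honest $2$-form, the orbifold locus is of measure zero, and the identity $pr^*\mathbf{u}_{v}=\omega-d(H_{v}\alpha)$ is exactly the groupoid-level descent already built into the construction of $\E_{v}$ and the atlas of Lemma \ref{S3atlas}; once the section is represented by $j_x$ at the level of $S^{3}\times\X$, the integral reduces to the routine computation above. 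I expect this point—verifying that the plain integral, and not an $1/m_{v}$-weighted one, enters the pairing—to be the only genuine subtlety.
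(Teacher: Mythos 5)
Your computation is correct and is precisely the ``easy to check'' verification that the paper omits: pulling $\omega - d(H_v\alpha)$ back along $j_x\colon z\mapsto(z,x)$, using that $H_v$ is constant on $S^3\times\{x\}$ and $d\alpha=\chi^*\tau$ with $\int_{S^2}\tau=1$. Your reading of $dH_v\alpha$ as $d(H_v\cdot\alpha)$ is the right one (it is the closed, basic representative), and your remark that the pairing is an ordinary integral over the coarse sphere is also correct, so nothing further is needed.
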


\begin{lem}\label{couplingclass}
$\mathbf{u}_{v}(\sigma+\iota_*B)=\mathbf{u}_{v}(\sigma)+ \omega(B)$  where $\sigma\in H^{sec}_{2}(|\E_{v}|,\inte),\ B\in H_2(|\mathcal{X}|,\inte)$, $\iota:\mathcal{X}\to\E_{v}$ is the inclusion of a fiber at the north pole and $\iota_{*}:H_2(|\mathcal{X}|,\inte)\to H_2(|\mathcal{E}_{v}|,\inte)$ is the induced pushforward map.
\end{lem}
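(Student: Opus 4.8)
The plan is to reduce the stated identity to the single geometric fact that the coupling form restricts to the fiber symplectic form $\omega$, after which bilinearity and naturality of the Kronecker pairing finish the argument. Since $\mathbf{u}_v(-)$ denotes evaluation of the cohomology class $\mathbf{u}_v\in H^2(|\E_v|,\real)$ on a homology class, it is additive in its argument, so $\mathbf{u}_v(\sigma+\iota_*B)=\mathbf{u}_v(\sigma)+\mathbf{u}_v(\iota_*B)$, and the whole statement reduces to proving $\mathbf{u}_v(\iota_*B)=\omega(B)$.

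The key step is to establish $\iota^*\mathbf{u}_v=[\omega]$ in $H^2(|\X|,\real)$, where $\iota\colon\X\to\E_v$ is the fiber inclusion over the north pole. I would check this directly from the description (\ref{coupling}). Work in the quotient model $\E_v=S^3\times\X/S^1$ with projection $pr\colon S^3\times\X\to\E_v$, so that (\ref{coupling}) is read as a descent through $pr$ and $pr^*\mathbf{u}_v$ equals the basic closed form $\omega-d(H_v\alpha)=\omega-dH_v\wedge\alpha-H_v\,\chi^*\tau$. Choose a point $z_0$ in the Hopf fiber $\chi^{-1}(\text{north pole})\subset S^3$; then $\iota$ lifts to $\tilde\iota\colon\X\to S^3\times\X$, $x\mapsto(z_0,x)$, with $pr\circ\tilde\iota=\iota$. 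Tangent vectors to the slice $\{z_0\}\times\X$ have vanishing $S^3$-component, so $\tilde\iota^*\alpha=0$; and since $d\alpha=\chi^*\tau$ is pulled back from $S^2$ via a map that is constant on this slice, $\tilde\iota^*d\alpha=0$ as well. Consequently every term involving $\alpha$ or $d\alpha$ pulls back to zero, while $\tilde\iota^*\omega=\omega$, whence $\iota^*\mathbf{u}_v=\tilde\iota^*pr^*\mathbf{u}_v=\omega$. This is precisely the assertion that $\mathbf{u}_v$ restricts to the fiber form.

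With this restriction property in hand, naturality of the pairing (the projection formula $\langle f^*c,a\rangle=\langle c,f_*a\rangle$ applied to $f=\iota$) gives
$$\mathbf{u}_v(\iota_*B)=\langle\mathbf{u}_v,\iota_*B\rangle=\langle\iota^*\mathbf{u}_v,B\rangle=\langle[\omega],B\rangle=\omega(B),$$
and combining with the additivity noted above yields $\mathbf{u}_v(\sigma+\iota_*B)=\mathbf{u}_v(\sigma)+\omega(B)$.

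I expect the only genuine subtlety to be organizational rather than computational: one must confirm that $pr^*\mathbf{u}_v$ is really the basic form of (\ref{coupling})—that is, that $\omega-d(H_v\alpha)$ is $S^1$-invariant and horizontal, so that it descends through $pr$—and that the fiber inclusion $\iota$ and its lift $\tilde\iota$ are well defined as orbifold morphisms in the groupoid model of $\E_v$. These points are mild, since $B$ lies in the homology of the coarse space $|\X|$ while $\omega$ and $\mathbf{u}_v$ are honest invariant forms, so the orbifold structure does not affect the differential-form computation; the entire content of the lemma is concentrated in the vanishing $\tilde\iota^*\alpha=0$, which expresses that the coupling/connection term is invisible along a single fiber.
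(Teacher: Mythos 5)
Your proof is correct and follows exactly the route the paper intends: the paper states this lemma (together with Lemma \ref{couplingconst}) without proof as an easy consequence of the definition (\ref{coupling}) of the coupling class, and your argument simply carries out that check, reducing by linearity to $\iota^*\mathbf{u}_v=[\omega]$ and verifying it on the slice $\{z_0\}\times\X$ where $\tilde\iota^*\alpha=0$. The points you flag as remaining to confirm (that $\omega-d(H_v\alpha)$ is basic, and that the fiber inclusion lifts) are indeed the only loose ends, and both are standard.
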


\section{Seidel Elements}

\subsection{Review of Seidel Representation}\label{sec:seidel}

First we recall the notion of Novikov ring.

\begin{defn}\label{dfn:Novikov}
Define a ring $\Lambda^{univ}$ as $$\Lambda^{univ}=\left\{\sum_{k\in\mathbb{R}}r_k T^k|r_k\in\mathbb{Q}, \#\{k<c|r_k\neq 0\}<\infty \,\,\forall c\in\mathbb{R}\right\}$$ and equip it with a grading given by $deg(T)=0$.

Let $\mathcal{C}$ be the Mori cone of $\X$ which is a finitely generated monoid. Then there is a maximal fraction $1/a$ with $a\in \inte_{+}$ such that $c_{1}(T\X)(\mathcal{C})\subset \ration$ is contained in the monoid generated by $1/a$.
Define $\Lambda:=\Lambda^{univ}[q^{\frac{1}{a}},q^{-\frac{1}{a}}]$  with the grading given  by $deg(q)=2/a$. 
\end{defn}

The Gromov-Witten theory of symplectic orbifolds is constructed by Chen-Ruan in \cite{CR2}, to which we refer the readers for more details. Let $$\<\alpha_{1},\alpha_{2},\alpha_{3} \>_{0,3,A}$$ be the $3$-point genus $0$ degree $A$ Gromov-Witten invariants of $\X$ with insertions $\alpha_1, \alpha_2, \alpha_3\in H^*(I\X, \mathbb{Q})$. We may assemble these genus zero orbifold Gromov-Witten invariants with 3 marked points using the Novikov ring: $$\<\alpha_{1},\alpha_{2},\alpha_{3} \>:=\sum_{A\in H_{2}(|\X|,\inte)}\<\alpha_{1},\alpha_{2},\alpha_{3} \>_{0,3,A}q^{c_{1}[A]} t^{\omega[A]}.$$ This is used to define the {\em quantum product},  an associative multiplication $*$ on 
$H^{*}(I\X,\mathbb{Q})\otimes \Lambda$, as follows: $$\<\alpha_{1}*\alpha_{2},\alpha_{3}\>_{orb}:=\<\alpha_{1},\alpha_{2},\alpha_{3} \>,\ \ \text{for}\ \alpha_{i}\in H^{*}(I\X,\mathbb{Q}).$$ The resulting ring, denoted by $QH_{orb}^{*}(\X,\Lambda)$, is called the orbifold quantum cohomology ring of $(\X,\Omega)$. 

Let $QH^*_{orb}(\X,\Lambda)^\times$ be the group of invertible elements (with respect to the quantum product "$*$") in $QH_{orb}^*(\X,\Lambda)$. In \cite{TW}, the authors construct a group homomorphism: 
$$\mathcal{S}: \pi_1(Ham(\X,\omega))\to QH^*_{orb}(\X, \Lambda)^\times.$$
Generalizing the manifold case, this is called the Seidel representation for symplectic orbifold $(\X,\omega)$. We briefly explain its construction. Represent a homotopy class $a\in \pi_{1}(Ham(\X,\omega))$ by a Hamiltonian loop $\pmb{\gamma}$, then we can construct Hamiltonian orbifiber bundle $\E_{\gamma}$ as in Section \ref{orbifiberSec}. Let $\{f_i\}$ be an additive basis of $H^*(I\X)$, $\{f^i\}$ another additive basis of $H^*(I\X)$ dual to $\{f_j\}$ with respect to the orbifold Poincar\'e pairing. Denote $c_1(T^{vert}\E)$ by $c_1^{vert}$. Let $\iota$ be an inclusion of a fiber over a point in $S^2$ (we choose the north pole throughout this paper). There is a Gysin map induced by this inclusion: $\iota_{*}: H^{*}(I\X,\mathbb{Q})\to H^{*+2}(I\E_{\gamma},\mathbb{Q})$. One can think of this map as a union of maps  from $H^{*}(\X_{(g)},\mathbb{Q})$ to $H^{*+2}(\E_{\gamma,(g)},\mathbb{Q})$, which makes sense because there is not orbifoldness along the horizontal direction.

\begin{defn}
Seidel representation for a symplectic orbifold $(\X,\omega)$ is defined as:
\begin{equation}\mathcal{S}(a):=\sum_{\sigma\in H_2^{sec}(|\E_{\gamma}|,\inte)} \left(\sum_{i} \< \iota_*f_i\>^{\E_{\gamma}}_{0,1,\sigma}f^i\right) \otimes q^{c_1^{vert}(\sigma)}t^{\textbf{u}_{\gamma}(\sigma)}.
\end{equation}
\end{defn}

The definition of $\mathcal{S}$ does not depend on the choice of Hamiltonian loop $\gamma$ representing the homotopy class $a$, thus is a well-defined map. Moreover it is a group homomorphism:

\begin{thm}[\cite{TW}, Theorem 1.2]
The map $\mathcal{S}$ has the following properties.
\begin{enumerate}
\item \bf{Triviality}:
\begin{equation}\label{triviality}
S(e)=1;
\end{equation}
\item \bf{Composition}:
\begin{equation}\label{composition}
\mathcal{S}(a\cdot b)=\mathcal{S}(a)*\mathcal{S}(b).
\end{equation}
\end{enumerate}
\end{thm}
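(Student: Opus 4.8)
The plan is to establish the two properties separately, following the gluing strategy familiar from the manifold case (Seidel, McDuff--Salamon, Lalonde--McDuff--Polterovich) and adapting each step to the groupoid/orbifold framework set up in Section \ref{orbifiberSec}. Both properties are statements about the defining sum
$\mathcal{S}(a)=\sum_{\sigma}\bigl(\sum_i \<\iota_*f_i\>^{\E_\gamma}_{0,1,\sigma}f^i\bigr)q^{c_1^{vert}(\sigma)}t^{\mathbf{u}_\gamma(\sigma)}$,
so in each case I would pair against the dual basis and reduce to a comparison of sectional orbifold Gromov--Witten counts.

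For \textbf{Triviality}, I would represent $e\in\pi_1(Ham(\X,\omega))$ by the constant Hamiltonian loop, for which $H_e=0$ and the orbifiber bundle is the trivial product $\E_e=\X\times S^2$ as a differentiable stack, with $\mathbf{u}_e=pr_*\omega$ and $c_1^{vert}=c_1(T\X)$ pulled back. A section class then decomposes as $\sigma=[s_x]+\iota_*B$ with $B\in H_2(|\X|,\inte)$; by Lemma \ref{couplingconst} and Lemma \ref{couplingclass} one has $\mathbf{u}_e([s_x])=0$ and $\mathbf{u}_e(\sigma)=\omega(B)$, while $c_1^{vert}([s_x])=0$. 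The sectional invariant $\<\iota_*f_i\>^{\E_e}_{0,1,\sigma}$ therefore reduces to the one-point genus-$0$ orbifold Gromov--Witten invariant of $\X$ in class $B$. Summing over $\sigma$ reassembles exactly the expansion of the quantum unit: the constant-section ($B=0$) term contributes the fundamental class, and the higher terms are controlled by the fundamental-class axiom for one-point invariants. Matching the result term-by-term against $\<1*\alpha,\beta\>_{orb}=\<\alpha,\beta\>_{orb}$ and invoking nondegeneracy of the orbifold Poincar\'e pairing gives $\mathcal{S}(e)=1$.

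For \textbf{Composition}, the strategy is a factorization argument. I would represent $a,b$ by Hamiltonian loops $\gamma_a,\gamma_b$ and $a\cdot b$ by the composition product $\gamma_a\cdot_{cp}\gamma_b$ of Section \ref{hamloop}, and build a Hamiltonian orbifiber bundle $\W$ over a genus-zero base with three cylindrical ends (a pair of pants) whose end monodromies are $\gamma_a$, $\gamma_b$, and $(\gamma_a\cdot_{cp}\gamma_b)^{-1}$; since the composite loop is precisely $\gamma_a\cdot_{cp}\gamma_b$, the fibration $\W$ is well-defined over the whole base. The moduli space of sectional orbifold morphisms of $\W$ with one marked point on each end is then analyzed by a neck-stretching degeneration that splits $\W$ along two separating circles into $\E_a$, $\E_b$, and a trinion-times-$\X$ piece encoding the quantum product. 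A gluing theorem for orbifold Gromov--Witten moduli, compatible with the groupoid charts $\gG_{\E_{v,C}}$, identifies the section count of $\W$ in one channel with the composite of the linear maps $\alpha\mapsto\mathcal{S}(a)*\alpha$ and $\alpha\mapsto\mathcal{S}(b)*\alpha$, and in the other channel with $\alpha\mapsto\mathcal{S}(a\cdot b)*\alpha$; comparing and again using nondegeneracy of the pairing yields $\mathcal{S}(a\cdot b)=\mathcal{S}(a)*\mathcal{S}(b)$.

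The main obstacle is the orbifold gluing step. In the manifold case one cites a well-understood gluing theorem for pseudoholomorphic sections; in the orbifold setting one must track how the isotropy groups at marked points and at the glued nodes interact. In particular, this requires controlling the evaluation of sections into twisted sectors (Lemma \ref{constsectwist}), the bookkeeping of the degree-shifting numbers $\iota_v$ in the exponents $q^{c_1^{vert}}$ via the desingularized-bundle computations of Lemma \ref{chernclass}, and the matching of the involution $\mathcal{I}$ on $I\X$ with the inverse-loop monodromy $(\gamma_a\cdot_{cp}\gamma_b)^{-1}$ at the third end. Establishing that the gluing respects the groupoid charts and that the virtual counts factorize with the correct insertion of the orbifold Poincar\'e dual basis $\{f^i\}$ is the technical heart of the argument; I would isolate all orbifold-specific input into a single gluing lemma, which is where the detailed construction of \cite{TW} is invoked.
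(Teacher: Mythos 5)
Note first that the paper you are working from does not prove this theorem at all: it is imported wholesale from the companion paper \cite{TW} (Theorem 1.2 there), where the Seidel representation for symplectic orbifolds is constructed, so there is no in-paper proof to measure your proposal against. Measured against the actual argument in \cite{TW}, your outline has the expected overall shape --- the manifold-case template of Seidel, McDuff, and Lalonde--McDuff--Polterovich adapted to Hamiltonian orbifiber bundles --- with one structural difference: the composition property there is obtained by gluing the two bundles $\E_a$ and $\E_b$ along a fiber over a connected sum $S^2 \# S^2 \cong S^2$ and degenerating, so that the quantum product enters through a splitting formula at the separating fiber, whereas your three-ended pair-of-pants cobordism builds the product into the base. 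The two pictures are equivalent in content, and in both the orbifold-specific burden is the gluing theorem that tracks isotropy at the node, the twisted sectors, and the involution $\mathcal{I}$; you correctly isolate this as the technical heart and as the input one takes from \cite{TW} rather than reproves.

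There is, however, a genuine gap in your triviality argument. The claim that for $\E_e = \X\times S^2$ the contributions of section classes $[s_x]+\iota_*B$ with $B\neq 0$ ``are controlled by the fundamental-class axiom for one-point invariants'' is wrong: the insertions $\iota_* f_i$ are fiber classes of degree $\deg f_i + 2$, not fundamental classes, so that axiom says nothing about them. Nor does a dimension count suffice: one has $vdim\, \overline{\M}_{0,1}(\X\times S^2, [s_x]+\iota_*B) - \deg \iota_* f_i = 2c_1(B) + (2n - \deg f_i)$ (with the appropriate age shift when the evaluation lands in a twisted sector), and this vanishes for suitable $f_i$ whenever $c_1(B)\le 0$, which is entirely possible outside the Fano case. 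The vanishing for $B\neq 0$ therefore requires a real argument --- for instance rotating the base by the $S^1$-action fixing the poles and applying the virtual localization of \cite{MT}, which is exactly the mechanism this paper itself deploys in its Fano discussion (Lemma \ref{FanoGW}), or an equivalent graph-space analysis. Until that step is supplied, your sketch does not establish $\mathcal{S}(e)=1$; the composition half, by contrast, is the right shape modulo the gluing lemma you already flag.
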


\subsection{Seidel Element for Toric Orbifolds}\label{seidelelement}

In this section we consider a symplectic toric orbifold $\X$ associated to a labelled moment polytope:
$$
\Delta= \bigcap_{i=1}^N\{\alpha\in\textbf{t}^*| \<\alpha,b_i\>\le \lambda_i\},
$$
with $m_{i}$ the labeling number on the $i$-th facet, $y_{i}$ the primitive outward normal vector and $b_{i}=m_{i}y_{i}$.

There is a naturally defined complex structure $\check{J}$ induced from the complex structure on $\com^{N}$. For any orbifiber bundle $\E\to S^2$ considered in Section \ref{orbifiberSec}, this complex structure $\check{J}$ determines a complex structure $J$ on the total orbifold $\E$, such that the projection $\pi:\E\to S^{2}$ is $j$-$J$ holomorphic, where $j$ is the complex structure on $S^{2}$ when identified with $\com P^{1}$. 

Recall from Section \ref{toricQH} that the Chen-Ruan cohomology of a toric orbifold can be expressed as a quotient of a polynomial ring $\ration[X_1,X_2,...,X_M] $.
Define the following valuation 
\begin{equation}\label{dfn:valuation}
\begin{split}
&\mathfrak{v}_{T}:\ration[X_1,X_2,...,X_M] \otimes\Lambda \to \ration;\\ 
&\mathfrak{v}_{T}(\sum_{d,k}a_{d,k}\otimes q^d T^k)=min\{k|\exists d: d_{d.k}\neq 0\}.
\end{split}
\end{equation}
This induces a valuation on the quantum cohomology $QH_{orb}^{*}(\X,\omega)$ which we still denote as $\mathfrak{v}_{T}$ when there is no ambiguity.

In this section we compute the Seidel elements for Hamiltonian loops determined by lattice points $y_{k}\in\text{Gen}(\Sigma)$, $k=1,...,M$. Let $H_k: \X\to \mathbb{R}$ be the Hamiltonian associated to $y_k$. Let $\E^{k}$ be the Hamiltonian bundle associated to $y_{k}$. Let $a_{k}\in \pi_{1}(Ham(\X,\omega))$ be the homotopy class of the Hamiltonian loop generated by $y_{k}$. Denote by $\sigma(y_{k})$ the minimal cone in $\Sigma$ containing $y_{k}$, and $y_{k}=\sum_{b_{i}\in \sigma(y_{k})} r_{ki} b_{i}$.
Let $h.o.t.(T^{r})$ be terms of order $>r$ with respect to the valuation $\mathfrak{v}_{T}$ on $QH_{orb}^{*}(\X,\omega)$. Then
\begin{thm}\label{seidelele}
$$\mathcal{S}_{k}:=\mathcal{S}(\alpha_{k})=X_{k}\otimes q^{-\sum_{b_{i}\in \sigma(y_{k})} r_{ki}} T^{-\sum_{b_{i}\in \sigma(y_{k})} r_{ki} \lambda_{i}}+h.o.t.(T^{-\sum_{b_{i}\in \sigma(y_{k})} r_{ki} \lambda_{i}}).$$
\end{thm}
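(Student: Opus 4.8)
The plan is to expand the Seidel element $\mathcal{S}_k=\mathcal{S}(\alpha_k)$ according to its defining sum over section classes and to isolate the term of lowest order in the Novikov variable $T$, i.e.\ of smallest value of the valuation $\mathfrak{v}_T$. First I would fix the complex structure $J$ on $\E_v$ induced from the standard structure $\check J$ on $\com^N$, so that $\pi:\E_v\to S^2$ is holomorphic. With respect to $J$ every stable map contributing to the one-point invariants in the definition of $\mathcal{S}$ projects onto $S^2$, so its class is a section class, and any two section classes differ by a fiber class $\iota_*B$. A contributing class therefore has the form $[\pmb{s}_x]+\iota_*B$ with $B$ effective, so $\omega(B)\ge 0$. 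By Lemma \ref{couplingclass}, $\mathbf{u}_v([\pmb{s}_x]+\iota_*B)=\mathbf{u}_v([\pmb{s}_x])+\omega(B)$, whence the smallest $T$-power is attained on the minimal class $[\pmb{s}_x]$ of the maximum section.

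Next I would compute the exponents carried by this minimal class. By Lemma \ref{couplingconst} we have $\mathbf{u}_v([\pmb{s}_x])=-H_v(x)$, where $x$ is a maximum of $H_v=\langle\Phi,y_k\rangle$. Since this maximum is attained on the face $F_v$, any such $x$ satisfies $\langle\Phi(x),b_i\rangle=\lambda_i$ for every $b_i\in\sigma(y_k)$; writing $y_k=\sum_{b_i\in\sigma(y_k)}r_{ki}b_i$ gives $H_v(x)=\sum_{b_i\in\sigma(y_k)}r_{ki}\lambda_i$, so the $T$-exponent is $-\sum_{b_i\in\sigma(y_k)}r_{ki}\lambda_i$, as claimed. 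For the $q$-exponent, Lemma \ref{chernclass} gives $c_1^{vert}([\pmb{s}_x])=c_1(\pmb{s}_x^*T^{vert}\E_v)=-\dim\sigma(v)+\iota_{v^{-1}}$; combined with the identity $\iota_v+\iota_{v^{-1}}=\dim\sigma(v)$ (valid for $v$ in the relative interior of $\sigma(v)$), this equals $-\iota_v=-\sum_{b_i\in\sigma(y_k)}r_{ki}$, matching the exponent of $q$.

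Then I would identify the cohomology coefficient. By Lemma \ref{constsectwist} the evaluation of $\pmb{s}_x$ at the north pole lies in the twisted sector $\E_{v,(v^{-1})}$, so in the defining sum only insertions $\iota_*f_i$ with $f_i\in H^*(\X_{(v^{-1})})$ survive; since the orbifold Poincar\'e pairing pairs $\X_{(v^{-1})}$ with $\X_{(v)}$, the dual class $f^i$ lies in $H^*(\X_{(v)})$, and its lowest-degree representative is the unit $\lambda^v=X_k$. To pin down the multiplicity I would analyze the moduli space of $J$-holomorphic sections in the minimal class: Proposition \ref{constantsection} records the $\inte_{m_v}$ orbifold structure at the marked point, while Lemma \ref{desingsum}(\ref{summand}) — each summand of $(\pmb{s}_x^*T\E_v)^{de}$ has Chern number $\ge -1$ — ensures that the linearized Cauchy--Riemann operator is surjective, so the moduli space is cut out transversally. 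Integrating the evaluation class over it then yields precisely the unit of $\X_{(v)}$, giving the coefficient $X_k$. This transversality and multiplicity computation, together with the verification that the minimal sections are exactly those over the maximum locus, is the step I expect to be the main obstacle.

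Finally I would dispose of the positive-energy terms. Any section class other than a minimal one is of the form $\sigma'+\iota_*B$ with $\omega(B)>0$, so Lemma \ref{couplingclass} forces its coupling value to exceed $-\sum_{b_i\in\sigma(y_k)}r_{ki}\lambda_i$. Hence all such contributions carry strictly higher powers of $T$ and may be absorbed into $h.o.t.(T^{-\sum_{b_i\in\sigma(y_k)}r_{ki}\lambda_i})$, which yields the stated formula for $\mathcal{S}_k$.
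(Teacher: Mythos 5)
Your overall strategy coincides with the paper's: isolate the lowest-order term as the contribution of constant sections over the maximum locus $\mathcal{F}_{max}$, read the $T$-exponent off Lemma \ref{couplingconst}, the $q$-exponent off Lemma \ref{chernclass}, and the coefficient $X_k$ from Lemma \ref{constsectwist} together with the regularity supplied by Lemma \ref{desingsum} and Lemma \ref{reglem}. Your exponent computations are correct. However, there is a genuine gap at the step that makes the whole argument work, namely the assertion that every contributing class has the form $[\pmb{s}_x]+\iota_*B$ with ``$B$ effective, so $\omega(B)\ge 0$''. It is true that any two section classes differ by a fiber class, but nothing you have said implies that the difference between an arbitrary represented section class and $\sigma_{max}$ is effective; that assertion is equivalent to the inequality $\mathbf{u}_{y_k}(\sigma)\ge\mathbf{u}_{y_k}(\sigma_{max})$ that you are trying to establish, so as written the argument is circular. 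Moreover, a $J$-holomorphic stable map in a section class decomposes as one section component plus fiber bubbles; the bubbles are effective, but the section component itself may a priori lie in any section class, so the effectivity of the bubble part does not settle the question. The paper's Step~6 supplies the missing input: using the explicit coupling form $\mathbf{u}_{y_k}=pr_*(\omega-dH_k\alpha)$ of (\ref{coupling}) and the circle-invariant complex structure, one verifies the pointwise inequality $\mathbf{u}_{y_k}(\xi,J\xi)=\omega(v,\check J v)-H_k\,d\alpha(\eta,\hat j\eta)>-\max H_k$ and integrates over $S^2$, showing directly that every non-constant $J$-holomorphic section, and every constant section over a point outside $\mathcal{F}_{max}$, has coupling value strictly greater than $-\max H_k=\mathbf{u}_{y_k}(\sigma_{max})$. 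Some such geometric positivity argument must replace your effectivity claim.

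A second, smaller omission, which you do flag as ``the main obstacle,'' is the identification of the moduli space in the minimal class: one must show that every element of $\overline{\mathcal{M}}_{0,1}(\E^{k},\sigma_{max},J,\mathbf{x}_{k})$ is represented by a constant sectional morphism (the paper's Step~1, imported from \cite[Lemma 3.1]{MT}), that its domain is $\com P(1,m_{k})$ (Proposition \ref{constantsection}), and that the moduli space is thereby identified with $S\mathcal{F}_{max,(y_{k}^{-1})}$, whose evaluation image integrates against the dual basis to give exactly the unit $\lambda^{y_k}=X_k$ of $H^*(\X_{(y_k)})$. Until the constancy statement and the positivity statement above are in place, the leading term has not actually been computed.
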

\begin{proof}
All we need is to compute $\< \iota_*f_i\>^{\E_{\gamma}}_{0,1,\sigma}$. Denote $\textbf{x}_{k}=(\E^{k}_{(y_{k}^{-1})})$. Let $\mathcal{F}_{max}$ be the (non-effective) suborbifold of $\mathcal{X}$ on which $ H_{k}$ is maximized. This suborbifold is fixed by the Hamiltonian loop. Every $|x|\in |\mathcal{F}_{max}|$ defines a section class $\sigma_{x}\in H_{2}(\E^{k},\inte)$ of the topological bundle $|\E^{k}|\to S^{2}$. It is easy to check that $\sigma_x$ does not depend on the choice of $|x|\in |\mathcal{F}_{max}|$, so we can denote the homology class as $\sigma_{max}$. Note that $\mathcal{F}_{max}$ swipes out a suborbifold $S\mathcal{F}_{max}\subset \E_{k}$ which is also fibered over $S^{2}$. It also determines a submanifold $S\mathcal{F}_{max,(g)}$ in each stratum $\E^{k}_{(g)}$ of $I\E^{k}$, which is a topological bundle over $S^{2}$ with fiber $|\mathcal{F}_{max,(g)}|$.

The computation of $\< \iota_*f_i\>^{\E_{\gamma}}_{0,1,\sigma}$ is divided into the following steps:

\begin{enumerate}
\item[Step 1:] Every element in $\overline{\mathcal{M}}_{0,1}(\E^{k},\sigma_{max},J,\textbf{x}_{k})$ is represented by a constant sectional morphism. This follows from the same computation as \cite[Lemma 3.1]{MT}.
\item[Step 2:] The domain of the constant sectional morphism is $\com P(1, m_{k})$, with $m_{k}=ord(D(-y_{k}))$. This is a direct conclusion from Proposition \ref{constantsection}.
\item[Step 3:] The constant sectional morphism is Fredholm regular.

Before prove Step 3, we run a quick check by dimension formula:
\begin{eqnarray*}
vdim \overline{\mathcal{M}}_{0,1}(\E^{k},\sigma_{max},J,\textbf{x}_{k}) & = & dim_\mathbb{R}\E^{k}+2c_{1}(T\E^{k})(\sigma_{max})+2-2\iota_{y_{k}^{-1}}-6\\
& = & 2(n+1)+2(2-dim\sigma(y_{k})+\iota_{y_{k}^{-1}})+2-2\iota_{y_{k}^{-1}}-6\\
& = & 2(n-dim\sigma(y_{k}))+2.
\end{eqnarray*}    

On the other hand, by the above discussion $\overline{\mathcal{M}}_{0,1}(\E^{k},\sigma_x,J,\textbf{x}_{k})$ can be identified with $S\mathcal{F}_{max,(y_{k}^{-1})}$. So its dimension is $2n-codim \Phi(\mathcal{F}_{max,(y_{k}^{-1})})+2=2n-2dim\sigma(y_{k})+2$. Therefore the virtual dimension and the actual dimension match. This gives an evidence for Fredholm regularity of constant sectional morphisms. 

Let $p: \com P^{1}(1,m_{k}))\to \com P^{1}$ be the coarse moduli space map. Then $p_*\pmb{s}^*T\E^{k}$ is the desingularized bundle $(\pmb{s}^*T\E^{k})^{de}$ of $\pmb{s}^*T\E^{k}$. This vector bundle over $\com P^{1}$ splits into a direct  sum of line bundles, as discussed in Lemma \ref{desingsum}.

\begin{lem}\label{reglem}
If each summand of $p_*\pmb{s}^*T\E^{k}$ has Chern number at least $-1$, then the linearized $\bar{\partial}_{J}$ operator $D_{\pmb{s}}\bar{\partial}_{J}$ is onto.
\end{lem}
\begin{proof}
To show that $D_{\pmb{s}}$ is onto we need the cohomology group $H^1(\pmb{s}^*T\E^{k})$ to vanish. By a general property of $p$, we have $H^1(\pmb{s}^*T\E^{k})=H^1(p_*\pmb{s}^*T\E^{k})$. Since $p_*\pmb{s}^*T\E^{k}$ splits, $H^1(p_*\pmb{s}^*T\E^{k})$ splits into a direct sum of $H^1$ of ths summands. Let L be any summand of $p_*\pmb{s}^*T\E^{k}$. We need to show that the cohomology group $H^1(L)$ vanishes. By Serre duality this group is isomorphic to $H^0(L^*\otimes K)^*$ where $K$ is the canonical line bundle of $\com P^1$. We need $c_1(L^*\otimes K)<0$ in order for this group to vanish. Now $$c_1(L^*\otimes K)=-c_1(L)+c_1(K)=-c_1(L)-1-1.$$ So we need $-c_1(L)-1-1<0$, namely $c_1(L)>-2$, i.e. $c_1(L)\geq -1$.
\end{proof}
The above lemma together with Lemma \ref{desingsum} complete the proof of Step 3, i.e., the moduli space $\Mbar_{0,1}(\E^{k},\sigma_x,J,\textbf{x}_{k})$ is regular.

\item[Step 4:] The image of $\overline{\mathcal{M}}_{0,1}(\E^{k},\sigma_{max},J,\textbf{x}_{k})$ under the evaluation lies in the twisted sector $\E^{k}_{(y_{k}^{-1})}$. This follows from Lemma \ref{constsectwist}.
\item[Step 5:] Show that $\sum_i\<\iota_*f_i\>^{\E^{k}}_{0,1,\sigma_{max}}f^i=X_{i}=\lambda^{y_{i}}\in H^{*}_{CR}(\X,\ration)$.

From the previous steps, we have $ev_{*}[\Mbar_{0,1}(\E^{k},\sigma_x,J,\textbf{x}_{k})]=[S\mathcal{F}_{max,(y_{k}^{-1})}]$ as cycles in $\mathcal{I}\E^{k}$. Then 
\begin{eqnarray*}
\sum_i \<\iota_*f_i\>^{\E^{k}}_{0,1,\sigma_{max}}f^i & = & \sum_{i}(\int_{ev_{*}[\overline{\mathcal{M}}_{0,1}(\E^{k},\sigma_{max},J,\textbf{x}_{k})]} \iota_*f_i) \ f^i\\
& = & \sum_{i:f_i\in H^*(\mathcal{X}_{(y_{k}^{-1})},\ration)}(\int_{ev_{*}[\overline{\mathcal{M}}_{0,1}(\E^{k},\sigma_{max},J,\textbf{x}_{k})]} \iota_*f_i) \ f^i\\
& = & \sum_{i:f_i\in H^*(\mathcal{X}_{(y_{k}^{-1})},\ration)}(\int_{[\mathcal{F}_{max,(y_{k}^{-1})}]} f_i)\ f^i \\
& = & X_{i}
\end{eqnarray*}
\item[Step 6:] Any $J$-holomorphic non-constant sectional morphism $\pmb{s}$ or a constant sectional morphism constructed from a point not in $\mathcal{F}_{max}$ satisfies 
$$\mathbf{u}_{y_{k}}([\pmb{s}])>\mathbf{u}_{y_{k}}(\sigma_{max})=-max\ H_{k}=-\sum_{b_{i}\in \sigma(y_{k})} r_{ki} \lambda_{i}.$$

If $\pmb{s}$ is a constant sectional morphism determined by a point $x$ with $H(x)>max\ H$, by Lemma \ref{couplingconst}, $\mathbf{u}_{y_{k}}([\pmb{s}])=-H_{k}(x)>-max\ H_{k}=\mathbf{u}_{y_{k}}(\sigma_{max})$.

If $\pmb{s}$ is a non-constant sectional morphism, compute $\mathbf{u}_{y_{k}}([\pmb{s}])$ by integrating the pullback of $\mathbf{u}_{y_{k}}$. Recall from (\ref{coupling}) that the coupling form is given by  $\textbf{u}_{y_{k}}=\omega-d H_{k}\alpha$. We choose the complex structure $\check{J}$ on $\X$ which is induced from the complex structure on $\com^{N}$. Consequently $\check{J}$ is invariant under the Hamiltonian circle action generated by $y_{k}$. Let $\hat{j}$ be the standard complex structure on $S^{3}$ induced from $\com^{2}$. Every non-zero tangent vector $\xi\in T_{[z,x]} S^{3}\times \X/S^{1}$ can be uniquely represented by a vector $\eta+v\in T_{(z,x)} S^{3}\times \X$ with $\eta\in ker\alpha$ and $v\in T_{x}\X$. Then 
$$\textbf{u}_{y_{k}}(\xi,J\xi)= \omega(v,\check{J}v)-dH_{k}\alpha(\eta+v,\hat{j}\eta+\check{J}v)=\omega(v,\check{J}v)-H_{k}d\alpha(\eta,\hat{j}\eta)>-max\ H_{k}.$$
Integrating over $S^{2}$ we get $\mathbf{u}_{y_{k}}([\pmb{s}])>\mathbf{u}_{y_{k}}(\sigma_{max})$.
\end{enumerate}

Thus we complete the proof of Theorem \ref{seidelele}.
\end{proof}

\begin{defn}
For a toric orbifold $\X$, the {\em reduced Seidel elements} are defined as:
$$\tilde{\mathcal{S}}_{k}: =\mathcal{S}_{k}\otimes q^{\sum_{b_{i}\in \sigma(y_{k})} r_{ki}} T^{\sum_{b_{i}\in \sigma(y_{k})} r_{ki} \lambda_{i}}$$
\end{defn}

\begin{cor}\label{highorder}
$\tilde{\mathcal{S}}_{k}=X_{k}+h.o.t.(T^{0}).$
\end{cor}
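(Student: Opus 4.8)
The plan is to obtain the corollary by a direct substitution of Theorem~\ref{seidelele} into the definition of $\tilde{\mathcal{S}}_{k}$, followed by a short bookkeeping of the valuation. To lighten notation I would abbreviate $s_{k}:=\sum_{b_{i}\in\sigma(y_{k})}r_{ki}$ and $\ell_{k}:=\sum_{b_{i}\in\sigma(y_{k})}r_{ki}\lambda_{i}$, so that Theorem~\ref{seidelele} reads
\[
\mathcal{S}_{k}=X_{k}\otimes q^{-s_{k}}T^{-\ell_{k}}+h.o.t.(T^{-\ell_{k}}),
\]
while the definition of the reduced Seidel element reads $\tilde{\mathcal{S}}_{k}=\mathcal{S}_{k}\otimes q^{s_{k}}T^{\ell_{k}}$.

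First I would multiply out the leading term: the product of $X_{k}\otimes q^{-s_{k}}T^{-\ell_{k}}$ with $q^{s_{k}}T^{\ell_{k}}$ equals $X_{k}\otimes q^{0}T^{0}=X_{k}$, so the leading contribution is exactly $X_{k}$. The only remaining point is to track what the shift $T^{\ell_{k}}$ does to the remainder. By the definition of $\mathfrak{v}_{T}$ in~(\ref{dfn:valuation}), multiplication by $T^{\ell_{k}}$ raises the $\mathfrak{v}_{T}$-valuation of every element by exactly $\ell_{k}$. Since the remainder $h.o.t.(T^{-\ell_{k}})$ consists of terms of valuation strictly greater than $-\ell_{k}$, after multiplication by $T^{\ell_{k}}$ these become terms of valuation strictly greater than $0$, i.e.\ $h.o.t.(T^{0})$. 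Combining the two pieces yields $\tilde{\mathcal{S}}_{k}=X_{k}+h.o.t.(T^{0})$, as claimed.

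I do not anticipate any genuine obstacle here: the statement is a purely formal consequence of Theorem~\ref{seidelele} together with the definition of the reduced Seidel element. The only substantive check is the compatibility of $\mathfrak{v}_{T}$ with multiplication by powers of $T$ (so that the $h.o.t.$ notation transforms predictably under the shift), and this is immediate from the defining formula for $\mathfrak{v}_{T}$, which only sees the minimal exponent of $T$ and is unaffected by the grading $\deg(T)=0$.
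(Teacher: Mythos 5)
Your proof is correct and is exactly the argument the paper intends (the paper gives no explicit proof, treating the corollary as an immediate consequence of Theorem~\ref{seidelele} and the definition of $\tilde{\mathcal{S}}_{k}$): one multiplies the leading term by $q^{s_k}T^{\ell_k}$ to get $X_k$ and notes that the shift by $T^{\ell_k}$ raises the $\mathfrak{v}_T$-valuation of the remainder from $>-\ell_k$ to $>0$. Nothing is missing.
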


\section{Quantum Cohomology of Toric Orbifolds}
In this section we give an explicit description of the quantum cohomology ring of toric orbifolds using the Seidel elements computed in Section \ref{seidelelement}.

\subsection{Main Result}\label{sec:results}
Recall that for $k=1,...,N$, $y_{k}$ is the primitive vector of the $k$-th ray, and for $k=N+1,...,M$, $y_{k}\in \text{Gen}(\Sigma)$. Let $\sigma(y_{k})$ be the minimal cone in $\Sigma$ containing $y_{k}$, we have $y_{k}=\sum_{b_{i}\in \sigma(y_{k})} r_{ki} b_{i}$, where $r_{ki}$ are positive rational numbers. Note that when $k=1,...,N$, $y_{k}= \frac{1}{m_{k}}b_{k}$.

For a generalized primitive collection $I\subset \{1,...,M\}$, let $\sigma(I)$ be the minimal cone in $\Sigma$ containing $\sum_{k\in I} y_{k}$, then there exists $\{c_{j}\}\subset \inte_{+}$ such that
 $$\sum_{k\in I} y_{k} =\sum_{y_{j}\in \sigma(I)} c_{j} y_{j}$$
 since the fan is complete.
 
 Write both sides in terms of $b_{i}$'s:
 $$\sum_{k\in I} \sum_{b_{i}\in \sigma(y_{k})} r_{ki} b_{i}=\sum_{y_{j}\in \sigma(I)} c_{j} \sum_{b_{i}\in \sigma(y_{j})} r_{ji} b_{i}=\sum_{y_{j}\in \sigma(I)} \sum_{b_{i}\in \sigma(y_{j})} c_{j} r_{ji} b_{i}.$$

Let 
\begin{eqnarray}\label{COmega}
C_{I}& & =\sum_{k\in I} \sum_{b_{i}\in \sigma(y_{k})} r_{ki} -\sum_{y_{j}\in \sigma(I)} \sum_{b_{i}\in \sigma(y_{j})} c_{j} r_{ji},\\
\Omega_{I} & & =\sum_{k\in I} \sum_{b_{i}\in \sigma(y_{k})} r_{ki} \lambda_{i} -\sum_{y_{j}\in \sigma(I)} \sum_{b_{i}\in \sigma(y_{j})}c_{j}  r_{ji}\lambda_{i}.
\end{eqnarray}

\begin{lem}\label{effective} For $\Omega_{I}$ defined as above, we have $\Omega_{I}>0$.
\end{lem}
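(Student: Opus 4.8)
The plan is to recognize both $\lambda$-sums occurring in $\Omega_I$ as values of the support function of $\Delta$, and then to derive positivity from strict subadditivity of that support function together with the defining property of a generalized primitive collection. Throughout, write $h(v):=\max_{\alpha\in\Delta}\langle\alpha,v\rangle$ for $v\in\textbf{t}$; since $\Phi(\X)=\Delta$ this is exactly $\max H_v$.

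First I would establish, for every $k$, the identity
\[
\sum_{b_i\in\sigma(y_k)} r_{ki}\lambda_i=h(y_k).
\]
Because $y_k=\sum_{b_i\in\sigma(y_k)}r_{ki}b_i$ with all $r_{ki}>0$ and $\langle\alpha,b_i\rangle\le\lambda_i$ on $\Delta$, every $\alpha\in\Delta$ satisfies $\langle\alpha,y_k\rangle\le\sum_{b_i\in\sigma(y_k)} r_{ki}\lambda_i$, giving $h(y_k)\le\sum_{b_i\in\sigma(y_k)} r_{ki}\lambda_i$. For the reverse inequality I use the cone--face correspondence of Section \ref{polytopeconstr}: the cone $\sigma(y_k)\in\Sigma$ corresponds to a nonempty face $F_{\sigma(y_k)}$ of $\Delta$ on which every constraint $\langle\cdot,b_i\rangle=\lambda_i$ with $b_i\in\sigma(y_k)$ is tight, so evaluating at any $\alpha^{*}\in F_{\sigma(y_k)}$ gives $\langle\alpha^{*},y_k\rangle=\sum r_{ki}\lambda_i$. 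The same argument shows that $h$ is \emph{linear on each cone} $\sigma\in\Sigma$: on $\sigma$ it agrees with $\langle\alpha^{*},\cdot\rangle$ for a fixed $\alpha^{*}$ in the corresponding face.

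Second, set $w:=\sum_{k\in I} y_k=\sum_{y_j\in\sigma(I)} c_j y_j$. Applying the identity of the first step term by term rewrites
\[
\Omega_I=\sum_{k\in I} h(y_k)-\sum_{y_j\in\sigma(I)} c_j h(y_j).
\]
Since all the $y_j$ with $y_j\in\sigma(I)$ lie in the single cone $\sigma(I)$, on which $h$ is linear, the second sum equals $h\!\left(\sum_{y_j\in\sigma(I)} c_j y_j\right)=h(w)$. Thus $\Omega_I=\sum_{k\in I} h(y_k)-h(w)$, and it remains to prove the strict inequality $\sum_{k\in I} h(y_k)>h(w)$.

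Finally I would invoke strict subadditivity of $h$. Choosing a maximizer $\alpha^{*}\in\Delta$ of $\langle\cdot,w\rangle$ gives $h(w)=\sum_{k\in I}\langle\alpha^{*},y_k\rangle\le\sum_{k\in I} h(y_k)$, with equality if and only if $\alpha^{*}$ simultaneously maximizes every $\langle\cdot,y_k\rangle$. The crux, and the step I expect to require the most care, is this equality analysis: if such a common maximizer existed, then letting $F^{*}$ be the minimal face of $\Delta$ containing $\alpha^{*}$, each $y_k$ would lie in the normal cone of $F^{*}$, which is a single cone of $\Sigma$; this contradicts the hypothesis that $\{y_k\mid k\in I\}$, being a generalized primitive collection, is not contained in any cone. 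Hence the inequality is strict and $\Omega_I>0$. I expect the only delicate point to be making precise that a common maximizer forces all the $y_k$ into one cone of $\Sigma$ (equivalently, that $\Sigma$ is the normal fan of $\Delta$ and $h$ is piecewise linear exactly along it), which is precisely where the generalized-primitive-collection hypothesis enters.
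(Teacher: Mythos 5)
Your proof is correct, and its skeleton is the same as the paper's: both arguments recognize $\Omega_I$ as the convexity defect $\sum_{k\in I}h(y_k)-h\bigl(\sum_{k\in I}y_k\bigr)$ of a piecewise-linear function that is linear on each cone of $\Sigma$ (your support function $h$ of $\Delta$ is exactly $-\phi_{\omega}$, where $\phi_{\omega}$ is the function the paper attaches to the K\"ahler class), and both conclude by strict super/subadditivity across cones using that a generalized primitive collection is not contained in a single cone. Where you genuinely diverge is in how the strict inequality is justified: the paper simply cites the standard toric fact that $\omega$ lying in the K\"ahler cone makes $\phi_{\omega}$ strictly convex, whereas you prove the strictness directly from the polytope by the equality analysis --- a common maximizer $\alpha^{*}$ would force every $y_k$ into the normal cone of the minimal face containing $\alpha^{*}$, which is a cone of $\Sigma$ by the cone--face correspondence of Section \ref{polytopeconstr}. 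Your route is more self-contained and elementary (it never mentions the K\"ahler cone), at the cost of having to verify that $\Sigma$ is the normal fan of $\Delta$ and that the relevant faces are nonempty; the paper's route is shorter but leans on an imported result. One small point worth making explicit in your write-up: the first identity $\sum_{b_i\in\sigma(y_k)}r_{ki}\lambda_i=h(y_k)$ uses that the coefficients $r_{ki}$ are nonnegative (true, since $\sigma(y_k)$ is the minimal cone containing $y_k$), and the face $F_{\sigma(y_k)}=\bigcap_{b_i\in\sigma(y_k)}F_i$ on which you evaluate is nonempty precisely because $\sigma(y_k)$ is a cone of $\Sigma$. With those details noted, the argument is complete.
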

\begin{proof}
Define a piecewise linear function $\phi_{\omega}:\textbf{t}\to \real$ as:
 $$\phi_{\omega}(u):=\sum_{b_{i}\in\sigma} -\lambda_{i} \langle b_{i}^{\vee},u\rangle,\ \ \ if\ u\in\sigma,$$
 for $\sigma$ any full dimensional cone of the fan $\Sigma$ and $\{b_{i}^{\vee}\}$ the dual basis of $\{b_{i}|b_{i}\in\sigma\}$. 
 
Then 
\begin{equation}\label{convexfunc}
\Omega_{I}=-\sum_{k\in I} \phi_{\omega}(y_{k}) +\phi_{\omega}(\sum_{y_{j}\in \sigma(I)} c_{j} y_{j}).
\end{equation}

On the other hand, $\phi_{\omega}$ corresponds to the symplectic form $\omega$ of the toric orbifold $\X$ under the isomorphism between $H^{2}(\X,\real)\cong \real[b_{1}^{\vee},...,b_{N}^{\vee}]/\<\sum_{i=1}^{n}\theta(b_{i})b_{i}^{\vee}, \theta\in \mathbf{M}\>$. Since $\omega$ lies in the K\"{a}hler cone of $\X$, $\phi_{\omega}$ is a strictly convex function (in the sense that if $v_{1},v_{2}$ are not contained in the same cone, then $\phi_{\omega}(v_{1}+v_{2})>\phi_{\omega}(v_{1})+\phi_{\omega}(v_{2})$). Together with (\ref{convexfunc}), we have $\Omega_{I}>0$.
\end{proof}

\begin{defn}\label{dfn:QSR}
The {\em quantum Stanley-Reisner relation} associated with the generalized primitive collection $I$ is defined to be
\begin{equation}
\prod_{k\in I} Z_{k}-q^{C_{I}}T^{\Omega_{I}} \prod_{y_{j}\in \sigma(I)} Z_{j}^{c_{j}} =0.
\end{equation}
Let $SR_\omega$ be the ideal generated by expressions as the left-hand side of the above equation. $SR_\omega$ is called the {\em quantum Stanley-Reisner ideal}. 
\end{defn} 

 From the composition property of Seidel representation, we have 
 \begin{thm}\label{qSR_seidel}
 The reduced Seidel elements $\tilde{\mathcal{S}}_{k}$, $k=1,...,M$, satisfy the quantum Stanley-Reisner relations and the cone relations.
 \end{thm}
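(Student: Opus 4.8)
The plan is to deduce both families of relations directly from the composition property \eqref{composition} of the Seidel representation, combined with the computation in Section \ref{hamloop} that the composition product of Hamiltonian loops corresponds to addition of lattice vectors. Write $a_k$ for the homotopy class of the loop generated by $y_k$, so that $\mathcal{S}_k=\mathcal{S}(a_k)$ by Theorem \ref{seidelele}. The governing principle is that $\mathcal{S}(a_{k_1})*\cdots*\mathcal{S}(a_{k_r})=\mathcal{S}(a_{k_1}\cdots a_{k_r})$, and that the class $a_{k_1}\cdots a_{k_r}$ depends only on the lattice sum $y_{k_1}+\cdots+y_{k_r}\in\mathbf{N}$. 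Thus every additive relation among the $y_k$ forces an \emph{exact} identity among the corresponding Seidel elements, valid to all orders in $T$ and hence insensitive to the higher-order terms in Theorem \ref{seidelele}.

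For the quantum Stanley--Reisner relations I would begin from the defining additive relation $\sum_{k\in I}y_k=\sum_{y_j\in\sigma(I)}c_j y_j$ of a generalized primitive collection $I$. Applying \eqref{composition} to both sides produces the exact identity $\prod_{k\in I}\mathcal{S}_k=\prod_{y_j\in\sigma(I)}\mathcal{S}_j^{c_j}$ in $QH_{orb}^{*}(\X,\Lambda)$. It then remains to pass to the reduced elements $\tilde{\mathcal{S}}_k=\mathcal{S}_k\otimes q^{\sum_{b_i\in\sigma(y_k)}r_{ki}}T^{\sum_{b_i\in\sigma(y_k)}r_{ki}\lambda_i}$. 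Substituting and collecting the monomial prefactors in $q$ and $T$, the surviving exponents are exactly the quantities $C_I$ and $\Omega_I$ of \eqref{COmega}, yielding $\prod_{k\in I}\tilde{\mathcal{S}}_k=q^{C_I}T^{\Omega_I}\prod_{y_j\in\sigma(I)}\tilde{\mathcal{S}}_j^{c_j}$, which is precisely the relation of Definition \ref{dfn:QSR} with the variables replaced by the $\tilde{\mathcal{S}}_k$.

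For the cone relations the argument is parallel, starting from a relation $\sum_{y_i\in\sigma_j}t_iy_i=0$ supported in a single top-dimensional cone $\sigma_j$. Splitting into positive and negative parts gives $\sum_{t_i>0}t_iy_i=\sum_{t_i<0}(-t_i)y_i$, and \eqref{composition} converts this into $\prod_{t_i>0}\mathcal{S}_i^{t_i}=\prod_{t_i<0}\mathcal{S}_i^{-t_i}$. Passing to reduced elements, I would verify that the $q$- and $T$-prefactors cancel between the two sides: since $\{b_l:b_l\in\sigma_j\}$ is a basis of $\sigma_j$, the relation $\sum_i t_i y_i=0$ forces $\sum_i t_i r_{il}=0$ for each $l$, whence $\sum_i t_i\sum_{b_l}r_{il}=0$ and $\sum_i t_i\sum_{b_l}r_{il}\lambda_l=0$. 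The prefactors therefore agree, giving $\prod_{t_i>0}\tilde{\mathcal{S}}_i^{t_i}=\prod_{t_i<0}\tilde{\mathcal{S}}_i^{-t_i}$, a cone relation from \eqref{eqn:cone_ideal}.

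The conceptual content is thus carried entirely by \eqref{composition} together with the lattice-addition description of loop composition, the remainder being bookkeeping of Novikov exponents. The point requiring the most care, and the main obstacle, is confirming that the homotopy class of the composed loop depends only on the lattice sum even when the summands do not lie in a common cone, as happens for a generalized primitive collection. I would make this explicit by invoking the local computation of Section \ref{hamloop}, where the generator of a composite loop was found to be $\langle\Phi,v_1+v_2\rangle$ (using torus-invariance of the moment map); iterating this shows all the loop-level identities above hold on the nose before $\mathcal{S}$ is applied, so that the composition property may be used directly.
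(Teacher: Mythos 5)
Your proposal is correct and follows exactly the route the paper intends: the paper's entire justification is the phrase ``from the composition property of Seidel representation,'' and your argument is the natural elaboration of that, combining the homomorphism property \eqref{composition} with the computation in Section \ref{hamloop} that composition of the toric Hamiltonian loops corresponds to addition in $\mathbf{N}$, and then tracking the $q$- and $T$-prefactors of the reduced elements to recover $C_I$ and $\Omega_I$ (and their cancellation, via linear independence of the $b_l$ in a top-dimensional simplicial cone, for the cone relations). Nothing in your write-up deviates from or adds a gap to the paper's reasoning.
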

 
Let 
\begin{eqnarray*}
\mathfrak{P}_{\xi}^{(0)} \negthickspace\negthickspace\negthickspace\negthickspace\negthickspace & & :=\sum_{i=1}^{N}\langle e_{\xi},b_{i} \rangle X_{i}^{m_{i}}\ ,\ \ \ \ \  \xi=1,...,n,\ \text{where}\ \{e_{\xi}\}_{\xi=1}^{n}\ \text{is a basis of}\ \mathbf{M}.
\end{eqnarray*}

\begin{thm}\label{thm:QH}
For $\xi=1,...,n$, there exist $\mathfrak{P}_{\xi}\in \Lambda[X_{1},...,X_{M}]$ such that
\begin{enumerate}
\item $\mathfrak{P}_{\xi}(\tilde{\mathcal{S}}_{1},...,\tilde{\mathcal{S}}_{M})=0$ in $H^*(I\X, \Lambda)$;
\item $\mathfrak{P}_{\xi}= \mathfrak{P}^{(0)}_{\xi} + h.o.t.(T^{0})$;
\item The ring homomorphism 
$$\Psi: \frac{\Lambda[X_1,...,X_M]}{Clos_{\mathfrak{v}_{T}}(\<\mathfrak{P}_{\xi} |\xi=1,...,n\> + SR_{\omega} + \mathcal{J}(\Sigma))} \to QH_{orb}^*(\X,\Lambda), \quad X_i\mapsto \tilde{\mathcal{S}}_k$$  is an isomorphism.
\end{enumerate}

\end{thm}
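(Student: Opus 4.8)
The plan is to treat $\Psi$ as a morphism of complete filtered $\Lambda$-algebras and to reduce the statement to the classical presentation of Lemma \ref{CRring} by passing to leading terms with respect to $\mathfrak{v}_{T}$. The basic mechanism is Corollary \ref{highorder}: since $\tilde{\mathcal{S}}_{k}=X_{k}+h.o.t.(T^{0})$ and the quantum corrections to the product $*$ carry strictly positive $\mathfrak{v}_{T}$-valuation (the correction in degree $A\neq 0$ comes with a factor $t^{\omega[A]}$, $\omega[A]>0$), every quantum monomial $\tilde{\mathcal{S}}^{\alpha}:=\tilde{\mathcal{S}}_{1}^{*\alpha_{1}}*\cdots*\tilde{\mathcal{S}}_{M}^{*\alpha_{M}}$ has leading term equal to the Chen--Ruan monomial $X^{\alpha}$. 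Fix a finite set $B$ of monomials whose images form a $\ration$-basis of the finite-dimensional ring $H^{*}_{CR}(\X,\ration)$. Using that $\Lambda$ is complete for $\mathfrak{v}_{T}$ (Definition \ref{dfn:Novikov}), a successive-approximation argument shows $\{\tilde{\mathcal{S}}^{\alpha}\}_{\alpha\in B}$ is a $\Lambda$-basis of $QH_{orb}^{*}(\X,\Lambda)$: $\Lambda$-independence follows by extracting the lowest-valuation coefficient (which would give a nontrivial $\ration$-relation among the $X^{\alpha}$), and spanning follows by repeatedly subtracting leading terms, the remainder gaining valuation at each step.

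Next I would construct the $\mathfrak{P}_{\xi}$. Evaluating $\mathfrak{P}^{(0)}_{\xi}$ on the reduced Seidel elements, its $T^{0}$-part is $\sum_{i=1}^{N}\langle e_{\xi},b_{i}\rangle X_{i}^{m_{i}}=\sum_{i}\langle e_{\xi},b_{i}\rangle\lambda^{b_{i}}$, a generator of $\mathcal{I}(\Sigfan)$ and hence zero in $H^{*}_{CR}(\X,\ration)$; therefore $\mathfrak{P}^{(0)}_{\xi}(\tilde{\mathcal{S}})$ has strictly positive valuation. Expanding it in the basis as $\mathfrak{P}^{(0)}_{\xi}(\tilde{\mathcal{S}})=\sum_{\alpha\in B} g_{\alpha}\,\tilde{\mathcal{S}}^{\alpha}$ with $\mathfrak{v}_{T}(g_{\alpha})>0$, I set $\mathfrak{P}_{\xi}:=\mathfrak{P}^{(0)}_{\xi}-\sum_{\alpha\in B} g_{\alpha}X^{\alpha}\in\Lambda[X_{1},\dots,X_{M}]$; this is a genuine polynomial because $B$ is finite, and by construction $\mathfrak{P}_{\xi}(\tilde{\mathcal{S}})=0$ and $\mathfrak{P}_{\xi}=\mathfrak{P}^{(0)}_{\xi}+h.o.t.(T^{0})$, giving (1) and (2). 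The map $\Psi$ is then well defined: it kills $\mathfrak{P}_{\xi}$ by construction and kills $SR_{\omega}$ and $\mathcal{J}(\Sigfan)$ by Theorem \ref{qSR_seidel}, and since $X_{i}\mapsto\tilde{\mathcal{S}}_{i}$ is $\mathfrak{v}_{T}$-continuous it descends to the $\mathfrak{v}_{T}$-closure of the ideal.

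For the isomorphism I would exploit that the leading terms of all three families of relations recover exactly the classical ideal of Lemma \ref{CRring}: the leading term of $\mathfrak{P}_{\xi}$ is $\mathfrak{P}^{(0)}_{\xi}$, a generator of $\mathcal{I}(\Sigfan)$; the leading term of each quantum Stanley--Reisner relation $\prod_{k\in I}X_{k}-q^{C_{I}}T^{\Omega_{I}}\prod_{y_{j}\in\sigma(I)}X_{j}^{c_{j}}$ is $\prod_{k\in I}X_{k}$ because $\Omega_{I}>0$ (Lemma \ref{effective}); and the cone relations carry no $T$-factor. Consequently every element of the classical ideal lies in $\mathcal{K}:=\langle\mathfrak{P}_{\xi}\rangle+SR_{\omega}+\mathcal{J}(\Sigfan)$ modulo elements of strictly positive $\mathfrak{v}_{T}$-valuation. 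A second successive-approximation argument in $R:=\Lambda[X_{1},\dots,X_{M}]/Clos_{\mathfrak{v}_{T}}(\mathcal{K})$ then shows $\{X^{\alpha}\}_{\alpha\in B}$ spans $R$ over $\Lambda$. Since $\Psi(X^{\alpha})=\tilde{\mathcal{S}}^{\alpha}$ and the latter form a $\Lambda$-basis of the target, $\Psi$ is surjective, and the spanning set $\{X^{\alpha}\}$ must be $\Lambda$-independent in $R$ (a nontrivial relation would map to a nontrivial relation among basis elements); hence $\{X^{\alpha}\}$ is a basis of $R$ sent to a basis, and $\Psi$ is an isomorphism.

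I expect the main obstacle to be the handling of the $\mathfrak{v}_{T}$-closure in the denominator: one must verify that passing to leading terms carries $Clos_{\mathfrak{v}_{T}}(\mathcal{K})$ precisely onto the classical ideal, and nothing larger, so that $R$ is a free $\Lambda$-module of the correct rank $\dim_{\ration}H^{*}_{CR}(\X,\ration)$. This is exactly what the two completeness/successive-approximation arguments control, and it is the reason the closure, rather than the ideal $\mathcal{K}$ itself, must appear in the statement.
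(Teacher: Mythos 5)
Your proposal is correct, and it reaches the conclusion by a genuinely different organization of the argument than the paper. The paper's proof has three moving parts: a successive-approximation proof that $\hat\Psi$ is surjective, an appeal to an algebraic lemma (Lemma \ref{alglemma}, the orbifold version of McDuff--Tolman's Lemma 5.1) which says that once one has elements $v_i\in\ker\hat\Phi$ with $\mathfrak{v}_T(w_i-v_i)>0$ deforming generators $w_i$ of the classical kernel, the quantum kernel is exactly $Clos_{\mathfrak{v}_T}(\langle v_1,\dots,v_m\rangle)$, and an \emph{iterative} construction of $\mathfrak{P}_\xi$ as a limit $\lim_k\mathfrak{P}^{(k)}_\xi$ obtained by repeatedly stripping off leading error terms. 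You instead work directly with $\Lambda$-module bases: you first upgrade Corollary \ref{highorder} to the statement that the quantum monomials $\tilde{\mathcal{S}}^\alpha$, $\alpha\in B$, form a $\Lambda$-basis of $QH^*_{orb}(\X,\Lambda)$, then produce $\mathfrak{P}_\xi$ in a \emph{single} step by expanding $\mathfrak{P}^{(0)}_\xi(\tilde{\mathcal{S}})$ in that basis and subtracting, and finally deduce injectivity and surjectivity simultaneously from the fact that $\Psi$ carries a spanning set of $R$ to a basis of the target. This is in effect a self-contained reproof of the McDuff--Tolman lemma in this setting, and it buys an explicit identification of $R$ as a free $\Lambda$-module of rank $\dim_\ration H^*_{CR}(\X,\ration)$; the paper's route is shorter on the page because that work is delegated to the cited lemma. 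Two points you should make explicit to match the rigor of the paper: all of your convergence steps (the two successive approximations, and the inversion of the change-of-basis matrix from $\{X^\alpha\}$ to $\{\tilde{\mathcal{S}}^\alpha\}$) need the \emph{uniform} energy gap $\delta_{\X}>0$ bounding the valuation increments from below, which the paper introduces for exactly this purpose; and your one-step construction of $\mathfrak{P}_\xi$ logically presupposes the basis statement, so the order of the steps cannot be permuted, whereas the paper's iterative construction is independent of any basis choice.
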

\begin{proof}
Let $\hat{\Psi}: \Lambda[X_1,...,X_M]\otimes \Lambda\to QH_{orb}^*(\X,\Lambda)$ be the map sending $X_i$ to $\tilde{\mathcal{S}}_{i}$. Let $\delta_{\X}$ be a positive number such that the symplectic area of any non-constant $J$-holomorphic curve is bounded below by $\delta_{\X}$. 

We first show that $\hat{\Psi}$ is surjective. For any $\alpha\in QH_{orb}^*(\X,\Lambda)$, Let $L(\alpha) $ be the leading term of $\alpha$, namely $\mathfrak{v}_{T}(L(\alpha))=\mathfrak{v}_{T}(\alpha)$ and $\mathfrak{v}_{T}(L(\alpha)-\alpha)>\mathfrak{v}_{T}(\alpha)$. Then $L(\alpha)=\sum_{s}L^{\alpha}_{s}(X_{1},...,X_{M})q^{s}$, and $\hat{\Psi}(L(\alpha))=\sum_{s}L^{\alpha}_{s}(\tilde{S}_{1},...,\tilde{S}_{M})q^{s}$. Define $\alpha_{1}=\alpha-\hat{\Psi}(L(\alpha))$. The leading terms in $\alpha$ and $\hat{\Psi}(L(\alpha))$  cancell, so $\mathfrak{v}_{T}(\alpha_{1})\ge \mathfrak{v}_{T}(\alpha)+\delta_{\X}$. We repeat the above procedure using $\alpha_{1}$, and continue the argument inductively, then $\mathfrak{v}_{T}(\alpha_{k})\to \infty$, and $$\hat{\Psi}\left(\lim_{k\to \infty} \left(L(\alpha)+L(\alpha_{1})+L(\alpha_{2})+...+L(\alpha_{k})\right)\right)=\alpha.$$ This proves surjectivity.

Next we show the kernel of $\hat{\Psi}$ is 
$$Clos_{\mathfrak{v}_{T}}(\<\mathfrak{P}_{\xi} |\xi=1,...,n\> + SR_{\omega} + \<\mathfrak{Q}_{\eta} |\eta=1,...,M-N\>).$$

 We need the following lemma which is analogous to \cite[Lemma 5.1]{MT}:
\begin{lem}\label{alglemma}
Let
\begin{eqnarray*}
\hat{\phi} & : & \ration[X_1,X_2,...,X_M] \to H_{CR}^*(\mathcal{X},\ration)\\
\hat{\Phi} & : & \ration[X_1,X_2,...,X_M] \otimes\Lambda \to QH_{orb}^*(\X,\Lambda)
\end{eqnarray*}
be ring homomorphisms such that $\hat{\phi}(X_i)=\hat{\Phi}(X_i)=X_i$ for $i=1,...,M$.
 Let $w_1,...w_m\in \ration[X_1,X_2,...,X_M]$ generate the kernel of $\hat{\phi}$, and suppose $v_1,...v_m\in Ker \hat{\Phi}$ and 
 \begin{equation}\label{highordcond}
 \mathfrak{v}_{T}(w_i-v_i)>0\ \text{for all}\ i.
\end{equation}

 Then the kernel of $ \hat{\Phi}$ is $$Ker\hat{\Phi}=Clos_{\mathfrak{v}_{T}}(\<v_1,...,v_m\>).$$
\end{lem}
\begin{proof}
Since in the orbifold case there is still a universal lower bound for symplectic area of non-constant $J$-holomorphic curves, thus we have \cite[Lemma 5.1]{MT} for orbifold quantum cohomology without any modification of the proof.
\end{proof}

From Lemma \ref{CRring}, we know that $Ker \hat{\phi}$ is generated by the Stanley-Reisner relations, cone relations, and $\mathfrak{P}_{\xi}^{(0)}$'s. We have shown in Theorem \ref{qSR_seidel} that the quantum Stanley-Reisner relations and cone relations lie in the kernel of $\hat{\Phi}$. Moreover they satisfy (\ref{highordcond}).

Now we construct $\mathfrak{P}_{\xi}$'s which lie in the Kernel of $\hat{\Phi}$ and satisfy the high order condition (\ref{highordcond}).

Let $\lambda_{0}=0$.
By Corollary \ref{highorder}, there is a $\lambda_{1}\ge\lambda_{0}+\delta_{\X}$, such that
\begin{equation}\label{Pinduction}
\mathfrak{P}_{\xi}^{(0)}(\tilde{\mathcal{S}}_{1},...,\tilde{\mathcal{S}}_{M})=\sum_{i=1}^{n}\langle e_{\xi},b_{i} \rangle \tilde{\mathcal{S}}_{i}=\sum_{i=1}^{n}\langle e_{\xi},b_{i} \rangle X_{i}^{m_{i}} +R^{1}\ T^{\lambda_{1}}+h.o.t.(T^{\lambda_{1}}),
\end{equation}
with $\mathfrak{v}_{T}(R^{1})=0$.
Then define $\mathfrak{P}_{\xi}^{(1)}=\mathfrak{P}_{\xi}^{(0)}-R^{1}\ T^{\lambda_{1}}$. Since $R^{1}=\sum_{s}R^{1}_{s}(X_{1},...,X_{M})q^{s}$, $\mathfrak{P}_{\xi}^{1}$ is again a polynomial in $X_{1},...,X_{M}$ with coefficients in $\Lambda$. Replace $\mathfrak{P}_{\xi}^{(0)}$ with $\mathfrak{P}_{\xi}^{(1)}$ in (\ref{Pinduction}), we get 
\begin{eqnarray*}\label{Pinduction2}
\mathfrak{P}_{\xi}^{(1)}(\tilde{\mathcal{S}}_{1},...,\tilde{\mathcal{S}}_{M}) & & =\sum_{i=1}^{n}\langle e_{\xi},b_{i} \rangle X_{i}^{m_{i}} +R^{1}(X_{1},...,X_{M})\ T^{\lambda_{1}}+h.o.t-(R^{1}(\tilde{S}_{1},...,\tilde{S}_{M})+h.o.t.(T^{\lambda_{2}}))\\
& & =\sum_{i=1}^{n}\langle e_{\xi},b_{i} \rangle X_{i}^{m_{i}} +R^{2}\ T^{\lambda_{2}}+h.o.t.(T^{\lambda_{2}}),
\end{eqnarray*}
for some $\lambda_{2}>\lambda_{1}+\delta_{\X}$.

Then we can construct $\mathfrak{P}_{\xi}^{(2)}$ as before. Continue the procedure inductively, we construct $\mathfrak{P}_{\xi}^{(k)}$, $k=0,1,2,....$. Then define $\mathfrak{P}_{\xi}=\lim_{k\to\infty} \mathfrak{P}_{\xi}^{(k)}$.

By Lemma \ref{alglemma}, we conclude that $Ker \hat{\Phi}$ is generated by quantum Stanley-Reisner relations, cone relations, and $\<\mathfrak{P}_{\xi} |\xi=1,...,n\>$. This completes the proof.
\end{proof}
\begin{rmk}
We remark that the quantum cohomology ring can also be described as some sort of quantization of the group ring $\ration[\mathbf{N}]^{\Sigfan}$ by  reversing the procedure in Section \ref{toricQH}. Let $\ration[\mathbf{N}]^{\Sigfan}_{\omega}$ be the ring with the same elements as $\ration[\mathbf{N}]^{\Sigfan}\otimes \Lambda$ and the product $*$ defined as the following: 
 $$\lambda^{e_1}*\lambda^{e_2}:=q^{C(e_1,e_2)}T^{\Omega(e_1,e_2)}\lambda^e,$$
where $e_1, e_2 \in \mathbf{N}$, $e=e_1+e_2$, $C(e_1,e_2)$ and $\Omega(e_1,e_2)$ are numbers defined in a similar fashion as (\ref{COmega}). The map $\ration[\mathbf{N}]^{\Sigfan}_{\omega}\to QH^*(\X)$ defined by sending $\lambda^e$ to the reduced Seidel element $S_e$ is a surjective ring morphism. The kernel is generated by $\mathfrak{P}_{\xi}(\lambda^{y_{1}},...,\lambda^{y_{M}})$'s.
\end{rmk}

\subsection{The Fano Case}\label{sec:Fano}
When the toric orbifold $\X$ is Fano, namely every effective curve has positive Chern number, we have the following lemma.
\begin{lem}\label{FanoGW}
If $B\neq 0$, then $\sum_{i}\< \iota_*f_{i}\>^{\E^{k}}_{0,1,\sigma_{max}+\iota_{*}B}f^{i}=0$.
\end{lem}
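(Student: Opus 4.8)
The plan is to prove the vanishing by a virtual-dimension count, running exactly parallel to the analysis of the minimal section class $\sigma_{max}$ in Step~3 of the proof of Theorem~\ref{seidelele}, but now using the Fano hypothesis to force a strict dimension excess. First I would note that each invariant $\<\iota_*f_i\>^{\E^k}_{0,1,\sigma_{max}+\iota_*B}$ is the pairing of the insertion $\iota_*f_i$ with the pushforward $ev_*[\overline{\mathcal{M}}_{0,1}(\E^k,\sigma_{max}+\iota_*B,J,\textbf{x}_k)]^{vir}$ of the virtual fundamental class under the evaluation map; hence it suffices to prove that this single pushforward homology class vanishes, from which every coefficient (and therefore the whole sum) is zero. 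As in Step~4 of Theorem~\ref{seidelele} (that is, Lemma~\ref{constsectwist}), the marked point sits at the distinguished north-pole orbifold point of the domain, whose monodromy is $D(-y_k)$ by Proposition~\ref{constantsection}; so for every contributing stable map the evaluation lands in the fixed twisted sector $\E^k_{(y_k^{-1})}$, and $ev$ is a map into $\E^k_{(y_k^{-1})}$.

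Next I would compute the virtual dimension. Writing $\iota:\X\to\E^k$ for the fibre inclusion at the north pole, the horizontal part of $\iota^*T\E^k$ is pulled back from a point, hence trivial, so $c_1(T\E^k)(\iota_*B)=c_1(T\X)(B)$ and therefore $c_1(T\E^k)(\sigma_{max}+\iota_*B)=c_1(T\E^k)(\sigma_{max})+c_1(T\X)(B)$. Feeding this into the dimension formula of Step~3 (which rests on Lemma~\ref{chernclass}), and noting that the age shift $\iota_{y_k^{-1}}$, the number of marked points, and $\dim_\real\E^k$ are all unchanged, gives
$$vdim\,\overline{\mathcal{M}}_{0,1}(\E^k,\sigma_{max}+\iota_*B,J,\textbf{x}_k)=\big(2(n-\dim\sigma(y_k))+2\big)+2c_1(T\X)(B)=\dim_\real\E^k_{(y_k^{-1})}+2c_1(T\X)(B),$$
since the sector $\E^k_{(y_k^{-1})}$ has real dimension $2(n-\dim\sigma(y_k))+2$.

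Finally I would invoke the Fano condition. If the moduli space is empty the invariant is trivially zero, so assume it is nonempty; a stable map in class $\sigma_{max}+\iota_*B$ then projects to $S^2$ with a single degree-one section component together with fibre components, the latter being $J$-holomorphic curves in copies of the fibre $\X$, so that $B$ is represented by effective fibre classes. As $B\neq 0$, the Fano hypothesis forces $c_1(T\X)(B)>0$, whence the virtual dimension strictly exceeds $\dim_\real\E^k_{(y_k^{-1})}$. A homology class of dimension larger than that of the ambient orbifold $\E^k_{(y_k^{-1})}$ must vanish, so $ev_*[\overline{\mathcal{M}}]^{vir}=0$ and every $\<\iota_*f_i\>^{\E^k}_{0,1,\sigma_{max}+\iota_*B}=0$, giving the claim. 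I expect the only delicate points to be the bookkeeping that guarantees the evaluation genuinely localizes in the single sector $\E^k_{(y_k^{-1})}$ (so the comparison is against $\dim_\real\E^k_{(y_k^{-1})}$, not a larger sector), and the clean splitting of $\sigma_{max}+\iota_*B$ into the minimal section plus effective fibre bubbles that yields effectivity of $B$; both are geometric consequences of the explicit groupoid model of $\E^k$ from Section~\ref{orbifiberSec} together with the energy bound of Step~6, as in \cite{MT}.
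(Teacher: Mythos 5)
Your argument has two genuine gaps. First, the localization of the evaluation map to the single twisted sector $\E^{k}_{(y_k^{-1})}$ is unjustified once $B\neq 0$. Proposition \ref{constantsection} and Lemma \ref{constsectwist} concern the \emph{constant} sectional morphisms representing the minimal class $\sigma_{max}$; a stable map in class $\sigma_{max}+\iota_{*}B$ consists of a section component together with bubble components contained in fibers, the marked point may lie on such a bubble, and the orbifold structure (hence the twisted sector) at the marked point is then not constrained to be $(y_k^{-1})$. This is why the sum in the lemma runs over a basis of all of $H^{*}(I\X)$ and why the paper's proof estimates $vdim\,\overline{\M}_{0,1}(\E^{k},\sigma_{max}+\iota_{*}B,J,(\E^{k}_{(v)}))-\deg\iota_{*}f$ sector by sector, for every $v\in\text{Box}(\Sigfan)$, not only for $v=y_k^{-1}$.

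Second, and more seriously, the dimension count alone does not close the argument. When $\iota_{y_k}=0$ (so $\dim\sigma(y_k)=1$), the paper finds that the excess $2(c_1^{\X}(B)-\iota_{v})-2+2\dim\sigma(v)$ can be zero, namely in the untwisted sector $v=0$ with $c_1^{\X}(B)=1$ and $\iota_{*}f$ the point class. This borderline contribution is not excluded by Fanoness together with degree considerations, and the paper must rule it out by a separate geometric argument: it rewrites the would-be nonzero invariant as a two-point invariant $\<\iota_{*}[pt],\iota^{0}_{*}[pt]\>^{\E^{k}}_{0,2,\sigma_{max}+\iota_{*}B}$, applies the $S^{1}$-localization of McDuff--Tolman to reduce to invariant stable maps whose bubble component is a gradient-flow orbit of $H_k$ from $\mathcal{F}_{min}$ to $\mathcal{F}_{max}$, and then derives a contradiction from the positivity of the weights of the circle action at $\mathcal{F}_{min}$. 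Your proposal omits this case entirely, so as written it does not prove the lemma; the dimension-count portion that you do carry out is essentially the easy half of the paper's proof, restricted (incorrectly) to one twisted sector.
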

The following is an immediate consequence of Lemma \ref{FanoGW}.
\begin{cor}\label{Fanoseidel}
$\tilde{\mathcal{S}}_{k}=X_{k}$. 
\end{cor}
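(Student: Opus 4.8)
The plan is to deduce Corollary \ref{Fanoseidel} from Lemma \ref{FanoGW} and then to establish Lemma \ref{FanoGW} itself by a virtual dimension count powered by the Fano hypothesis. First I would recall from Theorem \ref{seidelele} that
$$\mathcal{S}_k=X_k\otimes q^{-\sum_{b_i\in\sigma(y_k)}r_{ki}}\,T^{-\sum_{b_i\in\sigma(y_k)}r_{ki}\lambda_i}+h.o.t.,$$
where, unwinding the definition of the Seidel element, the higher–order terms arise exactly from the section classes $\sigma_{max}+\iota_*B$ with $B\neq 0$ (these are the classes with strictly larger coupling, i.e.\ $\omega(B)>0$, by Step~6 of the proof of Theorem \ref{seidelele}). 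Lemma \ref{FanoGW} asserts that each such contribution $\sum_i\langle\iota_*f_i\rangle^{\E^k}_{0,1,\sigma_{max}+\iota_*B}f^i$ vanishes in the Fano case. Granting this, there are no higher–order terms, so $\mathcal{S}_k$ equals its leading term; multiplying by the reduction factor $q^{\sum r_{ki}}T^{\sum r_{ki}\lambda_i}$ gives $\tilde{\mathcal{S}}_k=X_k$, which is Corollary \ref{Fanoseidel}.

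For Lemma \ref{FanoGW} the key point is positivity. If the invariant is nonzero the moduli space is nonempty, and any stable map it contains splits into a genuine section together with fiber bubbles whose total class is $B$; hence $B$ is represented by $J$–holomorphic spheres in a fiber and is effective, so the Fano condition forces $c_1(T\X)(B)>0$ whenever $B\neq 0$. Using $c_1(T\E^k)(\iota_*B)=c_1(T\X)(B)$ together with the dimension computation in Step~3 of the proof of Theorem \ref{seidelele} and Lemma \ref{chernclass}, the virtual dimension is
$$vdim\,\overline{\mathcal{M}}_{0,1}(\E^k,\sigma_{max}+\iota_*B,J,\textbf{x}_k)=2(n-\dim\sigma(y_k))+2+2c_1(T\X)(B).$$
By (the analogue of) Lemma \ref{constsectwist} the marked point lands in the twisted sector $\E^k_{(y_k^{-1})}$, which has real dimension $2(n-\dim\sigma(y_k))+2$. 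A one–point genus–zero invariant with insertion $\iota_*f_i$ is nonzero only when $\deg(\iota_*f_i)$ equals the virtual dimension; but $\deg(\iota_*f_i)\leq\dim_\real\E^k_{(y_k^{-1})}=2(n-\dim\sigma(y_k))+2$, which is strictly less than $vdim$ the moment $c_1(T\X)(B)>0$. Thus every invariant in the sum vanishes.

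The step I expect to be the main obstacle is making the degree bookkeeping airtight. I must confirm that the evaluation at the marked point really is pinned to the sector $(y_k^{-1})$ even once fiber bubbling is allowed, so that the sharp bound $2(n-\dim\sigma(y_k))+2$ applies rather than the crude bound $\dim_\real\E^k=2n+2$ (which is too weak to close the argument); and I must track the age shifts in the orbifold index formula carefully enough to guarantee that the inequality $\deg(\iota_*f_i)<vdim$ is strict. Once the correct twisted sector and its dimension are fixed, the Fano positivity $c_1(T\X)(B)>0$ finishes the proof immediately.
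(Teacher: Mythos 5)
Your reduction of the corollary to Lemma \ref{FanoGW} is exactly the paper's (the corollary is stated there as an immediate consequence of that lemma), so the issue is entirely with your proposed proof of Lemma \ref{FanoGW}, and the step you yourself flag as ``the main obstacle'' is in fact a genuine gap that the paper resolves in a different way. Your dimension count is only sharp enough if the evaluation at the marked point is pinned to the twisted sector $\E^{k}_{(y_{k}^{-1})}$, but that pinning is a consequence of Lemma \ref{constsectwist}, which concerns \emph{constant} sectional morphisms in the class $\sigma_{max}$; once $B\neq 0$ and fiber bubbles are attached, the stable map's evaluation point can lie in any twisted sector, and the Seidel element sums over a basis $\{f_i\}$ of all of $H^*(I\X)$. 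Accordingly the paper's proof of Lemma \ref{FanoGW} runs the virtual dimension count for the moduli spaces $\overline{\mathcal{M}}_{0,1}(\E^{k},\sigma_{max}+\iota_{*}B,J,(\E^{k}_{(v)}))$ with $v$ \emph{arbitrary}, and finds that Fanoness kills every case \emph{except} one: the untwisted sector $v=0$ with $c_{1}^{\X}(B)=1$ and $\deg\iota_{*}f=2n+2$ (a point class). In that residual case the degree bound is $2n+2$, not your $2(n-\dim\sigma(y_{k}))+2$, and the strict inequality $\deg(\iota_{*}f)<vdim$ fails.

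The missing idea is how to dispose of that residual case, and it is not a dimension count at all: the paper converts the one-point invariant into a two-point invariant $\<\iota_{*}[pt],\iota^{0}_{*}[pt]\>^{\E^{k}}_{0,2,\sigma_{max}+\iota_{*}B}$ with the second point at the south-pole fiber, and then applies an $S^{1}$-localization argument in the style of McDuff--Tolman: a nonzero invariant would force the existence of an $S^{1}$-invariant stable map consisting of the constant section at $x_{max}$ together with a gradient-flow-line component joining $x_{min}$ to $x_{max}$, and comparing $\omega(B)>0$ with $c_{1}^{\X}(B)=1$ yields $m_{min}=m_{max}+\tfrac{2}{p}\le 0$, contradicting positivity of the weight at $\mathcal{F}_{min}$. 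Without this (or some substitute for it), your argument does not close; as written it proves the vanishing only for those sectors where the degree gap is strict, which is strictly weaker than Lemma \ref{FanoGW}.
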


\begin{thm}\label{QHFano}
The orbifold quantum cohomology ring $QH_{orb}^*(\X,\Lambda)$ of a Fano toric symplectic orbifold $(\X,\omega)$ is isomorphic to
$$ \frac{\Lambda[X_1,...,X_M]}{Clos_{\mathfrak{v}_{T}}(\<\sum_{i=1}^{N}\langle e_{\xi},b_{i} \rangle X_{i}^{m_{i}}| \xi=1,...,n\> + \<\prod_{k\in I} X_{k}-q^{C_{I}}T^{\Omega_{I}} \prod_{y_{j}\in \sigma(I)} X_{j}^{c_{j}}|I\in\mathcal{GP} \> + \mathcal{J}(\Sigma))}.$$  
\end{thm}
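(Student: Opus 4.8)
The plan is to read Theorem \ref{QHFano} off as the Fano specialization of Theorem \ref{thm:QH}: the quantum Stanley--Reisner ideal $SR_\omega$ and the cone ideal $\mathcal{J}(\Sigfan)$ appear verbatim in both presentations, so the \emph{only} thing I must check is that in the Fano case the deformed generators $\mathfrak{P}_\xi$ of the linear relations collapse to their classical leading terms $\mathfrak{P}_\xi^{(0)}=\sum_{i=1}^{N}\langle e_\xi,b_i\rangle X_i^{m_i}$. As in the proof of Theorem \ref{thm:QH} I would fix the ring homomorphism $\hat\Phi:\Lambda[X_1,\dots,X_M]\to QH_{orb}^*(\X,\Lambda)$ sending $X_i\mapsto\tilde{\mathcal{S}}_i$, and recall that its surjectivity was established there by the $\mathfrak{v}_T$-approximation argument, which uses only the uniform lower area bound $\delta_{\X}$ together with $\tilde{\mathcal{S}}_i=X_i+h.o.t.(T^0)$; both inputs persist here (indeed Corollary \ref{Fanoseidel} gives the stronger $\tilde{\mathcal{S}}_i=X_i$), so surjectivity is reused directly. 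It therefore remains to identify $\mathrm{Ker}\,\hat\Phi$.

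For the kernel I would invoke Lemma \ref{alglemma}. By Lemma \ref{CRring} the classical kernel $\mathrm{Ker}\,\hat\phi$ is generated by the Stanley--Reisner monomials $\prod_{i\in I}X_i$ ($I\in\mathcal{GP}$), the cone relations $\mathcal{J}(\Sigfan)$, and the generators of $\mathcal{I}(\Sigfan)$, which are exactly the $\mathfrak{P}_\xi^{(0)}$. For each family I must supply a lift in $\mathrm{Ker}\,\hat\Phi$ satisfying the high-order condition (\ref{highordcond}). For the Stanley--Reisner family the lift is $\prod_{i\in I}X_i-q^{C_I}T^{\Omega_I}\prod_{y_j\in\sigma(I)}X_j^{c_j}$, which lies in $\mathrm{Ker}\,\hat\Phi$ by Theorem \ref{qSR_seidel} (substituting $\tilde{\mathcal{S}}_k=X_k$) and differs from the classical monomial by a term of $\mathfrak{v}_T$-valuation $\Omega_I>0$ by Lemma \ref{effective}. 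The cone relations lift to themselves and again lie in $\mathrm{Ker}\,\hat\Phi$ by Theorem \ref{qSR_seidel}; here the high-order condition holds trivially since lift and original coincide.

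The crux is the linear family: I claim that in the Fano case $\mathfrak{P}_\xi^{(0)}$ \emph{itself} already lies in $\mathrm{Ker}\,\hat\Phi$, so the lift is $v_\xi=w_\xi=\mathfrak{P}_\xi^{(0)}$ and (\ref{highordcond}) is automatic. Since $\tilde{\mathcal{S}}_i=X_i$ (Corollary \ref{Fanoseidel}) and $\mathcal{S}$ is a homomorphism with the loop of $b_i=m_iy_i$ equal to the $m_i$-fold composition of the loop of $y_i$, one has $X_i^{*m_i}=\tilde{\mathcal{S}}_i^{*m_i}=\tilde{\mathcal{S}}_{b_i}$, the reduced Seidel element of the loop generated by $b_i$. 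The minimal cone of $b_i$ is the ray $\rho_i$ and $b_i=1\cdot b_i$, so by the computation of Theorem \ref{seidelele} the leading term of $\mathcal{S}(a_{b_i})$ is $\lambda^{b_i}\otimes q^{-1}T^{-\lambda_i}$; moreover $b_i\in\mathbf{N}_{\rho_i}$ forces $b_i^{-1}=0$, i.e. $m_{b_i}=\mathrm{ord}(D(-b_i))=1$, so the relevant sectional domain is an honest $\com P^{1}$ and the evaluation lands in the untwisted sector. Running the Fano vanishing of Lemma \ref{FanoGW} for the bundle $\E^{b_i}$ — the positivity $c_1(T\X)(B)>0$ for $B\neq0$ forces every section class $\sigma_{max}+\iota_*B$ with $B\neq0$ to contribute zero — then yields $\tilde{\mathcal{S}}_{b_i}=\lambda^{b_i}=X_i^{m_i}$ with no higher-order tail. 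Consequently $\sum_{i=1}^N\langle e_\xi,b_i\rangle X_i^{*m_i}=\sum_{i=1}^N\langle e_\xi,b_i\rangle\lambda^{b_i}=0$, which is precisely the defining relation of $\mathcal{I}(\Sigfan)$. With all three lifts in hand, Lemma \ref{alglemma} identifies $\mathrm{Ker}\,\hat\Phi$ with $Clos_{\mathfrak{v}_{T}}\big(\langle\mathfrak{P}_\xi^{(0)}\rangle+SR_\omega+\mathcal{J}(\Sigfan)\big)$, and combining this with surjectivity gives the stated isomorphism.

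The main obstacle is exactly the last computation: justifying $\tilde{\mathcal{S}}_{b_i}=\lambda^{b_i}$, that is, extending the Fano vanishing beyond the generators $y_k\in\mathrm{Gen}(\Sigfan)$ handled by Corollary \ref{Fanoseidel} to the non-primitive loops $b_i$. Corollary \ref{Fanoseidel} only yields $\tilde{\mathcal{S}}_i=X_i$; the quantum power $X_i^{*m_i}$ a priori carries corrections, and eliminating them requires re-running the moduli and dimension count of Lemma \ref{FanoGW} for $\E^{b_i}$, checking Fredholm regularity of the constant sectional morphism over $\com P^{1}$ and that no class with $B\neq0$ contributes. Everything else is a faithful transcription of the argument of Theorem \ref{thm:QH} with the higher-order terms set to zero.
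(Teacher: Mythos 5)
Your proposal is correct and follows the paper's intended route: the paper presents Theorem \ref{QHFano} as an immediate consequence of Theorem \ref{thm:QH} together with Corollary \ref{Fanoseidel}, and your argument is exactly that specialization. You are in fact more careful than the paper in isolating the one non-automatic point --- that $\tilde{\mathcal{S}}_i^{*m_i}=\tilde{\mathcal{S}}_{b_i}$ carries no quantum corrections, which requires running the Seidel-element computation and the Fano vanishing of Lemma \ref{FanoGW} for the loop of $b_i=m_iy_i$ (where $m_{b_i}=1$ and the evaluation lands in the untwisted sector) rather than only for the $y_k$ --- and your treatment of that point via the composition property is sound.
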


\begin{proof}[Proof of Lemma \ref{FanoGW}]
If 
$y_{k}\in \text{Gen}(\Sigma)$ such that $\iota_{y_{k}}\neq 0$, then
\begin{eqnarray*}
vdim \overline{\mathcal{M}}_{0,1}(\E^{k},\sigma_{max}+\iota_{*}B,J,(\E^{k}_{(v)})) & & =vdim \overline{\mathcal{M}}_{0,1}(\E^{k},\sigma_{max},J,(\X_{(v)}))+2c_{1}^{\X}(B)\\
& & =2n+2+2(c_{1}^{\X}(B)-\iota_{v}-\iota_{y_{k}}).
\end{eqnarray*}

For $f\in H^{*}(\X_{(v)},\ration)$, $deg \iota_{*}f\le 2n+2-2dim \sigma(v)$. Then by Fanoness,
$$vdim \overline{\mathcal{M}}_{0,1}(\E^{k},\sigma_{max}+\iota_{*}B,J,(\E^{k}_{(v)}))-deg \iota_{*}f\ge 2(c_{1}^{\X}(B)-\iota_{y_{k}}-\iota_{v})+2dim\sigma(v)>0.$$
Consequently, $\< \iota_*f\>^{\E^{k}}_{0,1,\sigma_{max}+\iota_{*}B}= 0$.

Now if $y_{k}\in \text{Gen}(\Sigma)$ such that $\iota_{y_{k}}= 0$, then $y_{k}$ lies in a ray of the fan $\Sigma$ and $dim \sigma(y_{k})=1$. Therefore
\begin{eqnarray*}
vdim \overline{\mathcal{M}}_{0,1}(\E^{k},\sigma_{max}+\iota_{*}B,J,(\E^{k}_{(v)})) & & =2n+2-2dim\sigma(y_{k})+2(c_{1}^{\X}(B)-\iota_{v})\\
& & =2n+2(c_{1}^{\X}(B)-\iota_{v}).
\end{eqnarray*}
Compute
\begin{eqnarray*}
vdim \overline{\mathcal{M}}_{0,1}(\E^{k},\sigma_{max}+\iota_{*}B,J,(\E^{k}_{(v)}))-deg \iota_{*}f & & \ge 2n+2(c_{1}^{\X}(B)-\iota_{v})- (2n+2-2dim \sigma(v))\\
& & =2(c_{1}^{\X}(B)-\iota_{v})-2+2dim\sigma(v).
\end{eqnarray*}

By Fanoness, $2(c_{1}^{\X}(B)-\iota_{v})-2+2dim\sigma(v)=0$ only if $dim\sigma(v)=0$, i.e. $v=0$. Thus $vdim \overline{\mathcal{M}}_{0,1}(\E^{k},\sigma_{max}+\iota_{*}B,J,(\E^{k}_{(v)}))-deg \iota_{*}f=0$ only if $c_{1}^{\X}(B)=1$ and $deg \iota_*f=2n+2$.

In particular, we have shown $\< \iota_*f\>^{\E^{k}}_{0,1,\sigma_{max}+\iota_{*}B}\neq 0$ for $B\neq 0$ is possible only when the evaluation map lands in the trivial twisted sector. Thus from now on the proof  is similar to the manifold case as in \cite{MT}. We will sketch the idea below.

Because $deg \iota_*f=2n+2$, $f=\kappa PD([\X_{(0)}])$ for some nonzero $\kappa\in \ration$. Then homological interpretation of
$$\< \iota_*f)\>^{\E^{k}}_{0,1,\sigma_{max}+\iota_{*}B} \neq 0$$ 
is 
$$\< \iota_*[pt])\>^{\E^{k}}_{0,1,\sigma_{max}+\iota_{*}B} [\X_{(0)}]\neq 0.$$ 
In particular, its intersection product with a point class is non-zero. So
$$\< \iota_*[pt],\iota^{0}_*[pt]\>^{\E^{k}}_{0,2,\sigma_{max}+\iota_{*}B}
\neq 0,$$
where $\iota^{0}$ is the inclusion of a fiber at the south pole. 
 We represent the first point class by the point $x_{max}\in \mathcal{F}_{max,(0)}$ and the second point class by $x_{min}\in \mathcal{F}_{min,(0)}$.

Let $\phi:S^{1}\times S^{2}\to S^{2}$ be the rotation of $S^{2}$ with respect to the south pole and north pole.
Consider a circle action on $\E^{k}$ given by $\phi|_{D_{+}}\times\tilde{\pmb{\gamma}}$ and $\phi|_{D_{-}}\times Id_{\X}$. This action induces a circle action on the moduli space $\overline{\mathcal{M}}_{0,2}(\E^{k}, \sigma_{max}+\iota_{*}B)$. Then by a version of localization proven in \cite{MT}, $\< \iota_*[x_{max}],\iota^{0}_*[x_{min}]\>^{\E^{k}}_{0,2,\sigma_{max}+\iota_{*}B}\neq 0$ only if there exist $S^{1}$-invariant stable orbifold morphisms which consists of constant section determined by $x_{max}$ and a branch component lying in the south pole. The branch component is an $S^{1}$-invariant J-holomorphic curve in $\X$, representing $B$. It is the orbit of a gradient flow of $H_{k}$ from $x_{min}$ to $x_{max}$. So $B=p(\sigma_{max}-\sigma_{min})/q$ for $p\in \inte$ and $q\in \inte_{>0}$.  Thus  $\omega(B)=p (min H_{k}-max H_{k})/q$. Because $\omega(B)>0$ and  $min H_{k}-max H_{k}<0$, so $p>0$.  On the other side $1=c_{1}^{\X}(B)=p(m_{min}-m_{max})$, where $m_{min}$ and $m_{max}$ is the weight of the circle action at $F_{min}$ and $F_{max}$ respectively.  Therefore $m_{min}=m_{max}+\frac{2}{p}\leq 0$, which contradicts the weight at $\mathcal{F}_{min}$ is always positive.
\end{proof}
Now we look at some examples. As a convention we always put the barycenter of the moment polytope at the original. 
\begin{example}\label{CPab}
Weighted projective line $\com P(a,b)$ when $a$ and $b$ are coprime.
\begin{figure}[htb!]\label{fig_F2}
\centering
\begin{tikzpicture}
 \draw (-1,0) node[left] {a} -- (1,0) node[right] {b};
 \draw[snake=brace, raise snake=3, mirror snake] (-1,0)  -- (1,0);
 \draw (0, 0) node[above] {0};
\draw (0, -0.2) node[below] {$\lambda$};
 \draw[-triangle 90] (6,0) node[below] {0} -- (3,0) node[below] {-a};
 \draw[-triangle 90] (6,0) --  (10,0) node[below] {b};        
 \fill[black, fill opacity=0] (0,0) circle (0.03);      
 \fill[black, fill opacity=0] (-1,0) circle (0.05);        
 \fill[black, fill opacity=0] (1,0) circle (0.05);        
 \fill[black, fill opacity=0] (3,0) circle (0.05);        
 \fill[black, fill opacity=0] (4,0) circle (0.05);  
  \fill[black, fill opacity=0] (5,0) circle (0.05);        
 \fill[black, fill opacity=0] (6,0) circle (0.05);
 \fill[black, fill opacity=0] (7,0) circle (0.05);        
 \fill[black, fill opacity=0] (8,0) circle (0.05);  
  \fill[black, fill opacity=0] (9,0) circle (0.05);        
 \fill[black, fill opacity=0] (10,0) circle (0.05);
\end{tikzpicture} 
\caption{Labelled Moment Polytope and Stacky Fan of $\com P(a,b)$.}
\end{figure}
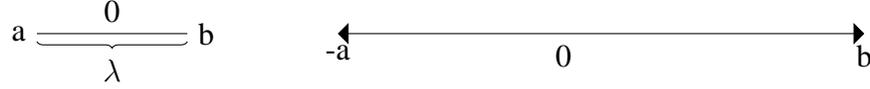
$$ QH^{*}_{orb}(\com P(a,b),\Lambda)\cong \frac{\Lambda[X_1,X_2]}{Clos_{\mathfrak{v}_{T}}(\<-aX_{1}^{a}+bX_{2}^{b}\> + \<X_{1}X_{2}-q^{\frac{1}{a}+\frac{1}{b}}T^{\lambda}\> )}.$$  
\end{example}

\begin{example}\label{F2}
Weighted projective space $\com P(1,1,2)$.
\begin{figure}[htb!]\label{fig_F2}
\centering
\begin{tikzpicture}
 \draw (-6,0) node[below left] {$(-\frac{2}{3}\lambda,-\frac{1}{3}\lambda)$} --  (-4,0) node[below right] {$(\frac{4}{3}\lambda,-\frac{1}{3}\lambda)$} --  (-6,1) node[above left] {$(-\frac{2}{3}\lambda,\frac{2}{3}\lambda)$} -- cycle;
 \draw[snake=brace, raise snake=10, mirror snake] (-6,0)  -- (-4,0);
 \draw[snake=brace, raise snake=10] (-6,0)  -- (-6,1);
 \draw[snake=brace, raise snake=10, mirror snake] (-4,0)  -- (-6,1);
 \draw (-5,-0.5) node[below] {$2\lambda$};
 \draw (-6.5,0.5) node[left] {$\lambda$};
 \draw (-5,1) node[above right] {$\sqrt{5}\lambda$};
 \draw[-triangle 90] (0,0) node[below right] {(0,0)} -- (-1,0) node[below] {$y_{1}$};
  \draw[-triangle 90] (0,0) -- (0,-1) node[below right] {$y_{2}$};
 \draw[-triangle 90] (0,0) -- (1,2) node[right] {$y_{3}$};
 \draw (0,1) node[left] {$y_{4}$};
   \fill[black, fill opacity=0] (-5.33,0.33) circle (0.03);        
 \fill[black, fill opacity=0] (0,0) circle (0.05);        
 \fill[black, fill opacity=0] (0,1) circle (0.05);     
 \fill[black, fill opacity=0] (1,0) circle (0.05);        
 \fill[black, fill opacity=0] (0,2) circle (0.05);        
 \fill[black, fill opacity=0] (2,0) circle (0.05);        
 \fill[black, fill opacity=0] (1,1) circle (0.05);        
 \fill[black, fill opacity=0] (2,1) circle (0.05);   
  \fill[black, fill opacity=0] (1,2) circle (0.05);        
 \fill[black, fill opacity=0] (2,2) circle (0.05);   
  \fill[black, fill opacity=0] (0,-1) circle (0.05);        
 \fill[black, fill opacity=0] (1,-1) circle (0.05);        
 \fill[black, fill opacity=0] (2,-1) circle (0.05);       
  \fill[black, fill opacity=0] (-1,-1) circle (0.05);         
 \fill[black, fill opacity=0] (-1,0) circle (0.05);        
 \fill[black, fill opacity=0] (-1,1) circle (0.05);        
 \fill[black, fill opacity=0] (-1,2) circle (0.05);        
\end{tikzpicture} 
\caption{Moment Polytope and Fan of $F_{2}$.}
\end{figure}
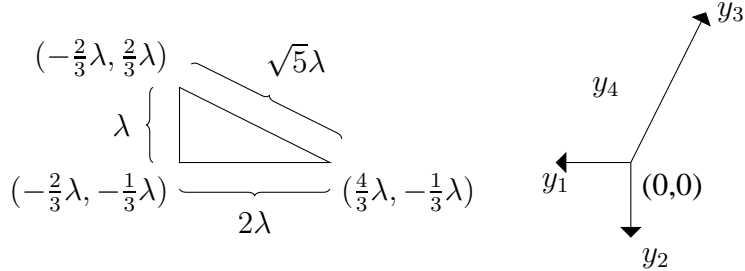

Then the orbifold quantum cohomology ring $QH^{*}_{orb}(\com P(1,1,2),\Lambda)$ is isomorphic to
$$ \frac{\Lambda[X_1,X_2,X_{3},X_{4}]}{Clos_{\mathfrak{v}_{T}}(\<-X_{1}+X_{3},-X_{2}+2X_{3}\> + \<X_{1}X_{2}X_{3}-q^{C}T^{\Omega}X_{4}\>+\<X_{1}X_{3}-X_{4}^{2}\> )},$$  
where 
$$C=1+1+1-\frac{1}{2}-\frac{1}{2}=2,$$
$$\Omega=\frac{2}{3}\lambda+\frac{1}{3}\lambda+\frac{2}{3}\lambda-\frac{1}{2}\cdot\frac{2}{3}\lambda-\frac{1}{2}\cdot\frac{2}{3}\lambda=\lambda.$$
Note that the generalized primitive collections in this example are $\{y_{1},y_{2},y_{3}\}$ and $\{y_{2},y_{4}\}$. But the quantum Stanley-Reisner relation associated to $\{y_{2},y_{4}\}$ is already contained in $$ \<X_{1}X_{2}X_{3}-q^{C}T^{\Omega}X_{4}\>+\<X_{1}X_{3}-X_{4}^{2}\>.$$

\end{example}

\end{document}